\newcommand{\be}{\begin{eqnarray}}
	\newcommand{\ee}{\end{eqnarray}}
\newtheorem{theo}{Theorem}[section]
\newtheorem{remark}[theo]{Remark}
\newtheorem{lemma}[theo]{Lemma}
\newtheorem{defi}[theo]{Definition}
\newtheorem{prop}[theo]{Proposition}
\newcommand{\R}{\mathbb R}
\newcommand{\C}{\mathbb C}
\newcommand{\N}{\mathbb N}
\newcommand{\mD}{{\mathcal D}}
\newcommand{\mB}{{\mathcal B}}
\newcommand{\mL}{{\mathcal L}}
\newcommand{\mN}{{\mathcal N}}
\newcommand{\mM}{{\mathcal M}}
\newcommand{\mJ}{{\mathcal J}}
\newcommand{\mH}{{\mathcal H}}
\newcommand{\mT}{{\mathcal T}}
\newcommand{\mU}{{\mathcal U}}
\newcommand{\mS}{{\mathcal S}}
\newcommand{\mF}{{\mathcal F}}
\newcommand{\lxi}{{\langle \xi \rangle}}
\newcommand{\eps}{\epsilon}
\newcommand{\op}{\operatorname{op}}
\newcommand{\Op}{\operatorname{Op}}
\newcommand{\Rep}{\operatorname{Re}}
\newcommand{\beq}{\begin{equation}}
	\newcommand{\eeq}{\end{equation}}
\numberwithin{equation}{section}
\begin{document}
	
	\title[Global existence of small solutions for general hyperbolic balance laws]{Global existence and asymptotic decay for small solutions of general quasilinear hyperbolic balance laws}
	\author[Sroczinski]{Matthias Sroczinski}
	\address{University of Konstanz, Konstanz, DE}
	\email{matthias.sroczinski@uni-konstanz.de}
	\thanks{This work was supported under DFG Grants nos.\ FR 822/10-2 and FR 822/11-1 (SPP-2410).}

\begin{abstract}
	
	This paper establishes global existence and asymptotic decay for small solutions to quasilinear systems of hyperbolic balance laws, where, generalizing previous works, the hyperbolic operator does not need to admit an entropy nor does the source term need to satisfy any symmetry assumptions. Dissipative properties are characterized by three conditions corresponding to regimes of small, intermediate and large wave numbers in Fourier space and the fully non-linear system is treated by using methods of para-differential calculus recently developed in the context for proofs of global existence and decay in second-order hyperbolic systems. The present work leads, in particular, to  asymptotic stability of rest-states for multidimensional  Jin-Xin relaxation system, a result not accessible through previous methods. \\
	
	\textbf{Keywords.}  balance law, hyperbolic system, initial value problem, global existence, asymptotic stability\\
	\vspace{-\baselineskip}
	
	\textbf{AMS subject classifications.} Primary 35L45, 35A01, 35B40,  35S50
\end{abstract} 

\maketitle

\tableofcontents

\section{Introduction}
The descriptions of non-equilibrium processes in physics as well as numerical schemes for conservation laws are often based on hyperbolic balance laws, which are systems of partial differential equations of the form
\begin{equation} 
	\label{eq:haupt}
	U_t(t,x)+\sum_{j=1}^df^j(U(t,x))_{x_j}=Q(U(t,x)),~~(t,x) \in [0,T] \times \R^d
\end{equation}
to be solved for $U:[0,\infty) \times \R^d \to \R^n$ taking values in some domain $\mU \subset \R^{n}$ (the state space) and $f^1,\ldots,f^d,Q: \mU \to \R^n$ are given smooth vector-fields with
$$Q(U)=\begin{pmatrix}
	0_{\R^m} \\ q(U)
\end{pmatrix},~~\text{for}~~q:\mathcal U \to \R^r,~r=n-m.$$   

Regarding the corresponding Cauchy problem, i.e. the combination of \eqref{eq:haupt} with an initial condition
\begin{equation}
\label{eq:haupt2}
U(0,x)=U_0(x),~~x \in \R^d, ~~U_0(\R^d)  \subset \mU.
\end{equation}
it is well-known (cf. \cite{maj84}) that, under the assumption of symmetrizability of the $Df^j$, for smooth initial data $U_0$ a unique smooth solution  to \eqref{eq:haupt}, \eqref{eq:haupt2}  exists at least up to some time finite $T>0$ and that this solution can in general not be continued for all positive times.\footnote{In fact, as soon as the left hand side is generally non-linear in some direction, there always exist smooth initial data, such that the gradient of the solution blows up in finite time \cite{bae23}.} 

In his groundbreaking PhD thesis \cite{kawa83} Kawashima presented the first  comprehensive general treatment of global in time existence for so-called small solutions of \eqref{eq:haupt}. He proved that under certain symmetry and compatibility conditions for initial data sufficiently close to a constant equilibrium state global-in-time smooth solutions exist and approach this equilibrium for $t \to \infty$, provided the coefficient matrices of the linearized equations satisfy what was later called the Kawashima-Shizuta condition \cite{kawa85}. This work was generalized in the seminal papers \cite{kawa04,yon04,kawa09} by Yong and Kawashima/Yong, respectively. The results of Kawashima and Yong have numerous very important applications, thus as the discrete velocity
models of the Boltzmann equation, Levermore's moment closure systems, Bouchut's discrete velocity BGK models, the one-dimensional system of equations of gas dynamics in thermal non-equilibrium and the three-dimensional non-linear Maxwell system from optics (cf. \cite{yon04, kawa09}).

Building on this crucial contributions, in the present work we will show that some of the aforementioned results can be embedded in an even more general framework, where the  assumptions on the interaction between  vector-fields $f^1,\ldots,f^d$ and $Q$ are considerably weaker. The theory is inspired by work on second-order hyperbolic equations \cite{FS}, \cite{sro23} likely to be extendable to hyperbolic systems of any order. As an application, we will establish that our results yield global existence and decay of small solutions for general multi-dimensional Jin-Xin models, which  do not satisfy the assumptions of \cite{kawa83,kawa04,yon04,kawa09}.

To fix some terminology consider $\bar U \in \mU$ with $Q(\bar U)=0$, i.e. a (homogeneous, constant) reference/equilibrium state of \eqref{eq:haupt}. Let $B_1,B_2 \subset H^1(\R^d,\R^n)$ be Banach-spaces and $\gamma: [0,\infty) \to [0,\infty)$, $\lim_{t \to \infty} \gamma(t)=0$.

\begin{defi}
	We call $\bar U$ asymptotically $B_1 \to B_2$ stable with rate $\gamma$ if there is $\eps>0$ such that for all $U_0$ with $U_0(\R^d) \subset \mU$,  $U_0-\bar U \in B_1$ and $\|U_0-\bar U\|_{B_1} < \eps$ there exists a unique solution $U$ of \eqref{eq:haupt}, \eqref{eq:haupt2} with 
	$$U-\bar U \in C^0([0,\infty,), B_2) \cap C^1([0,\infty), L^2)$$
	and
	$$\|U(t)-\bar U\|_{B_2} \le \gamma(t)\|U_0-\bar U\|_{B_1},~~ t \in [0,\infty).$$
\end{defi}

Using this terminology, \cite{yon04,kawa09} in particular show asymptotic $(H^s \cap L^1) \to H^{s-1}$ stability, $s \ge [d/2]+2$, with rate $(1+t)^{-\frac14}$ and the present work generalizes this result under weaker conditions. The main differences and similarities between the present work and that in \cite{kawa83,kawa85,yon01,kawa04,yon04,bia07,kawa09} are discussed in Appendix B. In both cases one of the key features is that solutions $U$ to the linearization of \eqref{eq:haupt}  at a constant rest state decay pointwise in Fourier space as
$$|\hat U(t,\xi)| \le C\exp(-c\xi^2/(1+\xi^2)t)|\hat U(0,t)|.$$

A different direction of generalizing \cite{kawa83}, \cite{yon04}, etc. consists in treating systems of so-called \textit{regularity loss type}, cf. \cite{kawa082,ued18,ued21} and the many references therein. Solutions to such systems may decay  even though the Kawashima-Shizuta conditions is violated. However the decay-rate in Fourier space is slower than proportional to $\exp(-c\xi^2/(1+\xi^2)t)$. Our results do clearly not include such systems.

It is also worth mentioning that the stability results for systems treated in \cite{yon04} have lately been extended to critical Besov-spaces \cite{kawa15,dan22}, which allow to consider a larger class of initial values (perturbations) and also characterize the decay behaviour of solutions in greater detail. The theory presented here should be a good starting point to get analogous results for general systems. 

The paper is organized as follows. In Section 2 we introduce the setting and state our main results, which are then proven in Sections 3,4,5,6. Concretely, in Section 3 we treat the linear system. In Section 4 we show our main result: non-linear asymptotic stability for small solutions. Section 5 is dedicated to illustrate the relation between the present work and the results of \cite{yon04, kawa09}. In Section 6 we apply our results to Jin-Xin relaxation systems. In Appendix A we prove some technical results corresponding to what will be called \textit{stable compatibility}. Lastly, as mentioned above, Appendix B illustrates the key differences and similarities between the present work and the consideration in \cite{kawa83,kawa85,yon01,kawa04,yon04,bia07,kawa09}.

\section{Setting and main results}

\textbf{Notation.} As usual for a Banach-space $E$ and $l \in \R$
$$H^l(\R^d,E):=\{u \in L^2(\R^d,E): \Lambda^l u \in L^2(\R^d,E)\}$$
are the $L^2$-based $E$-valued Sobolev spaces with norm
$$\|u\|_{H^l(\R^d,E)}:=\|\Lambda^l u\|_{L^2(\R^d,E)}.$$
Here $\Lambda=\mF^{-1}\langle \cdot \rangle \mF$, where $\mF$ is the Fourier-transform on $\R^d$ and $\lxi=(1+|\xi|^2)^{\frac12}$. We often just write $H^l$, $\|u\|_l$,  instead of $H^l(\R^d,E)$, $\|u\|_{H^l(\R^d,E)}$, if there is no concern for confusion, and $\|u\|$ for $\|u\|_0$. 

For a matrix $K \in \C^{N\times N}$ the matrix $K^*=\bar K^t$ is the adjoint and we call $\Rep K=\frac12(K+K^*)$
the real part of $H$.

\textbf{Assumptions on the equations}. We assume the operator on left hand side of \eqref{eq:haupt} to be  hyperbolic in the sense that for $A^j=Df^j$
\begin{enumerate} 
	\item[(H)]
	The matrix family $A(U,\xi)=\sum_{j=1}^d A^j(U)\xi_j$, $\xi=(\xi_1,\ldots,\xi_d) \in \R^d$, admits a symbolic symmetrizer, i.e. there exists a matrix family $S: C^\infty(\mU \times S^{d-1}, \C^{N \times N}) \in \C^{n \times n}$, bounded as well as all of its derivatives, and a constant $C>0$ such that for all $(U,\omega) \in \mU \times S^{d-1}$
	\begin{equation}
		\label{ss}
		S(U,\omega)=S(U,\omega)^*, \quad C^{-1}I_n \le \mS(U,\omega) \le CI_n,\quad S(U,\omega)A(U,\omega)=(A(U,\omega)S(U,\omega))^*.
	\end{equation}
\end{enumerate}
\begin{remark}
	\label{rem1}
	(H) implies that $A$ is similar to the Hermitian matrix
	$$S^{\frac12}AS^{-\frac12}=S^{-\frac12}(S A) S^{-\frac12}.$$
	In particular all eigenvalues of $A$ are real and semi-simple. On the other hand the latter  implies (H) if we additionally assume that the eigenvalues of $A(U,\omega)$ have constant multiplicity with respect to $(U,\omega) \in \mathcal U \times S^{d-1}$. In this case the operator on the right hand side of \eqref{eq:haupt} is called constantly hyperbolic. If all eigenvalues are simple it is called strictly hyperbolic.
	
	Also note that (H) already allows for a larger class of systems then the one treated in e.g. \cite{yon04}. In the latter the existence of an entropy function is assumed, which implies the simultaneous symmetrizability of $A^1,\ldots,A^d$.  
\end{remark}

As we are exclusively concerned with solutions to initial values close to a homogeneous reference state $\bar U \in \mU$, in each step of the argumentation we may restrict the state space $\mU$ to a smaller domain, still containing $\bar U$, without changing the notation. For simplicity we assume $\bar U=0$. 

Regarding the right hand side we make the following standard assumption, which specifies the considerations in the introduction:

\begin{enumerate}
	\item[(RH)]  The source term is of the form $Q(U)=\begin{pmatrix}
		0_{\R^m} \\ q(U)
	\end{pmatrix}$
	for a smooth function $q: \mU \to \R^{r}$, $r=n-m$ and all eigenvalues of $D_vq(U)$ have strictly negative real part $(U=(u,v)^t, u \in \R^m, v \in \R^r)$
\end{enumerate}

Note that (RH) in particular implies $\det q_v(0)\neq 0$. Thus for smooth solutions $U$ of \eqref{eq:haupt} in a neighbourhood of $0$ we can perform, in the same way as in \cite{yon04}, the variable transformation 
\begin{equation}
	\label{v}
	V=\phi(U)=\begin{pmatrix}
		u\\ q(u,v)
	\end{pmatrix}
\end{equation}
to obtain the equivalent system, called the normal form of \eqref{eq:haupt},
\begin{equation} 
	\label{nf}
	V_t+\sum_{j=1}^d \tilde A^j(V)V_{x_j}=\mL(V)V,
\end{equation}
where with $\psi=\phi^{-1}$
\begin{equation} 
	\label{nf2}
	\tilde A^j(V)=(D\psi(V))^{-1}A^j(\psi(V))D\psi(V), \quad \mL(V)=\begin{pmatrix} 0 & 0\\ 0 &q_v(\psi(V))\end{pmatrix}.
\end{equation}
Since (H) is invariant under diffeomorphisms and $\psi(0)=0$, we may assume w.l.o.g.  that \eqref{eq:haupt} is already given in normal form. With  $q_v(U)=L(U)$ we write \eqref{nf} as
$$\begin{pmatrix}
	u \\ v
\end{pmatrix}_t+\sum_{j=1}^d\begin{pmatrix}
	A^j_{11}(U) & A^j_{12}(U)\\
	A^j_{21}(U) & A^j_{22}(U)
\end{pmatrix}=\begin{pmatrix}
	0 & 0\\
	0 & L(U)
\end{pmatrix}\begin{pmatrix} u\\ v\end{pmatrix}$$
for matrices $A^j_{11}(U) \in \R^{m \times m}$, $A^j_{12}(U) \in \R^{m \times r}$, $A_{12}^j(U) \in \R^{n \times m}$, $A^j_{22}(U) \in \R^{r \times r}$.

\textbf{Linear estimates.} In Section 3 we will prove results on the linearization
\begin{equation}
	\label{lin}
	U_t+\sum_{j=1}^d A^j(0)U_{x_j}=\mathcal L(0)U,
\end{equation}
 which are interesting in itself and are also crucial for the non-linear stability result Theorem \ref{main}.
 
\begin{prop}
	\label{propD} Assume that $A^j$ and $\mL$ satisfy (H), (RH), as well as conditions (D1), (D2), (D3) of Section 3. Then there exist $C,c>0$ such that each solution $U \in L^2$ of \eqref{lin}  satisfies
	\begin{equation} 
	\label{od1}
	|\hat U(t,\xi)|^2 \le C\exp(-c\rho(|\xi|)t)|\hat U(0,\xi)|^2,~~(t,\xi) \in [0, \infty) \times \R^d
	\end{equation}
for $\rho(\tau)=\tau^2/(1+\tau^2)$.
\end{prop}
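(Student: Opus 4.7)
The plan is to work on the Fourier side, where \eqref{lin} becomes the parameter-dependent ODE
\begin{equation*}
\partial_t \hat U(t,\xi) = \bigl(-iA(0,\xi) + \mathcal L(0)\bigr)\hat U(t,\xi),
\end{equation*}
so the task reduces to producing, for each frozen $\xi$, a Hermitian form $E(\xi,\cdot)$ equivalent to $|\cdot|^2$ (uniformly in $\xi$) whose time-derivative along trajectories of this ODE is bounded above by $-c\rho(|\xi|)\,E(\xi,\hat U)$. Pointwise decay in Fourier space with rate $\exp(-c\rho(|\xi|)t)$ then follows by Gronwall's lemma and the equivalence of $E(\xi,\cdot)$ with $|\cdot|^2$.

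The natural principal part of $E$ is $\langle S(0,\omega)\hat U,\hat U\rangle$, $\omega=\xi/|\xi|$, with $S$ the symbolic symmetrizer of (H). Pairing the ODE with $S(0,\omega)\hat U$ and taking real parts, the symmetrizing identity in \eqref{ss} kills the purely convective contribution $-i\,|\xi|\,SA(0,\omega)$ and leaves a dissipation proportional to $\mathrm{Re}\langle S(0,\omega)\mathcal L(0)\hat U,\hat U\rangle$. Because of the block structure in (RH), this bare dissipation controls only the $v$-component and vanishes on the $\mathcal L(0)$-kernel; to recover damping on the $u$-component I would follow the Kawashima-type compensator philosophy and add a skew-Hermitian correction
\begin{equation*}
\kappa(|\xi|)\,\mathrm{Re}\langle K(\omega)\hat U,\hat U\rangle, \qquad \kappa(|\xi|)\sim |\xi|/(1+|\xi|^2)^{1/2},
\end{equation*}
with $K(\omega)$ a smooth, bounded, skew-adjoint matrix family on $S^{d-1}$ chosen so that the full symbol
\begin{equation*}
\mathrm{Re}\bigl[K(\omega)A(0,\omega)\bigr] - \mathrm{Re}\bigl(S(0,\omega)\mathcal L(0)\bigr)
\end{equation*}
is strictly positive as a Hermitian form. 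The weight $\kappa$ is designed so that the correction is dominated by the principal part (keeping $E$ equivalent to $|\cdot|^2$) while the commutator with $-iA(0,\xi)$ produces a term of order $|\xi|^2/(1+|\xi|^2)^{1/2}$, matching the required factor $\rho(|\xi|)$ after normalization.

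The existence of such a $K$ and the coercivity of the resulting quadratic form are precisely what I expect the three conditions (D1), (D2), (D3) of Section 3 to encode, each covering one of the regimes $|\xi|\le\delta$, $\delta\le|\xi|\le R$, and $|\xi|\ge R$ evoked in the introduction: (D1) should supply, at low frequencies, the missing dissipation on $\ker \mathcal L(0)$ through the interaction between $\mathcal L(0)$ and $A(0,\omega)$ (a Kawashima--Shizuta type algebraic condition); (D2) should rule out purely oscillatory modes in the intermediate range; and (D3) should guarantee strict damping of every mode at high frequencies, where $A(0,\xi)$ dominates $\mathcal L(0)$. Splitting accordingly, one obtains $\partial_t E \le -c|\xi|^2 E$ on $|\xi|\le 1$ and $\partial_t E \le -c E$ on $|\xi|\ge 1$, which combine into $\partial_t E \le -c\rho(|\xi|)E$.

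The main obstacle I expect is the construction of the compensator $K(\omega)$ with enough uniform smoothness in $\omega \in S^{d-1}$ that it later quantizes into an admissible symbol for the para-differential argument of Section 4, together with a careful choice of the weight $\kappa(|\xi|)$ that simultaneously (i) keeps $E$ positive and equivalent to $|\cdot|^2$ with constants independent of $\xi$ and (ii) realizes the crossover between the $|\xi|^2$ rate at the origin and the constant rate at infinity. The fact that no entropy is assumed, so that $S(0,\omega)$ genuinely depends on $\omega$, is not an issue at the linear stage since $\xi$ is a frozen parameter, but it explains why all algebraic conditions must be stated on the sphere $S^{d-1}$ rather than globally in $\xi$.
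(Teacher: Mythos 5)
Your overall frame (pass to Fourier variables, build for each frozen $\xi$ a Hermitian form $E(\xi,\cdot)\simeq|\cdot|^2$ with $\partial_t E\le -c\rho(|\xi|)E$, conclude by Gronwall) is exactly the paper's strategy, which reduces the proposition to the existence of a symbol $\mathcal D(\xi)=\mathcal D(\xi)^*$, $C^{-1}I\le\mathcal D(\xi)\le CI$, $\Rep(\mathcal D(\xi)\mathcal M(\xi))\le -c\rho(|\xi|)I$. But the concrete construction you propose --- principal part $\langle S(0,\omega)\hat U,\hat U\rangle$ plus a single skew-adjoint Kawashima compensator $\kappa(|\xi|)K(\omega)$ with $\Rep[K(\omega)A(0,\omega)]-\Rep(S(0,\omega)\mathcal L(0))>0$ --- has a genuine gap in the present generality. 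It tacitly assumes that the zeroth-order contribution $\Rep\langle S(0,\omega)\mathcal L(0)\hat U,\hat U\rangle$ is nonpositive and ``vanishes exactly on $\ker\mathcal L(0)$''; that is the entropy-dissipative (conservative--dissipative) structure of Kawashima/Yong/Bianchini et al., which this paper deliberately does \emph{not} assume. Under (RH) alone the spectrum of $q_v(0)$ lies in the open left half-plane, but $\Rep(S(0,\omega)\mathcal L(0))$ need not be $\le 0$ for the hyperbolic symmetrizer $S$ of (H), so the ``bare dissipation'' you start from is not available. Moreover, the existence of a compensator of the type you describe is essentially equivalent to the Kawashima--Shizuta condition (K), and Propositions \ref{prop:none} and \ref{prop:k} of the paper exhibit systems satisfying (H), (RH), (K) (and even (D1) together with one of (D2), (D3)) for which the decay estimate \eqref{od1} fails. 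So a single $\omega$-dependent compensator with a scalar weight $\kappa(|\xi|)$ cannot encode the full content of (D1)--(D3); your argument would ``prove'' decay for the counterexamples.

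What the paper does instead is construct $\mathcal D(\xi)$ separately in three regimes and glue by a partition of unity: for $r_1\le|\xi|\le r_2$ it uses only the spectral condition (D1) plus compactness (so your regime assignment is also off --- (D1) governs the \emph{intermediate} range, (D2) the low-frequency range, (D3) the high-frequency range); for $|\xi|\to 0$ it block-diagonalizes $\mathcal M(\kappa\omega)$ by spectral separation, identifies $\kappa A_{12}L^{-1}A_{21}$ as the first-order correction on the ``conserved'' block, and invokes the stable-compatibility machinery (Lemma \ref{lem:sc}) to produce a symmetrizer of the form $(T^{-1})^*T^{-1}$ built from the diagonalizing transformations; for $|\xi|\to\infty$ it applies the same perturbation argument to $\nu\mathcal M(\nu^{-1}\omega)=-iA(\omega)+\nu\mathcal L$. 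To repair your proof you would need to replace the additive compensator ansatz by this spectral-projection construction (or something equivalent), since in the non-symmetric setting the required symmetrizer is genuinely not of the form $S+\kappa K$ with $K$ skew.
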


By standard arguments (cf. e.g. \cite{kawa83}),  \eqref{od1} implies the following decay estimate in Sobolev spaces.

\begin{prop}
	\label{prop:sobd}
	For any $s \ge 0$ there exists $C,c>0$ such that: For all $U_0 \in H^s \cap L^1$ the solution of \eqref{lin}, \eqref{eq:haupt2} satisfies the estimate
	\begin{align*}
		\|U(t)\|_s \le C(\operatorname{e}^{-ct}\|U_0\|_s+(1+t)^{-\frac{d}4}\|U_0\|_{L^1}),~~ t \in [0,\infty).
	\end{align*}
\end{prop}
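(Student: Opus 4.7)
The plan is to reduce everything to the pointwise Fourier bound \eqref{od1} of Proposition \ref{propD} via Plancherel's theorem, and then to split the frequency integral according to the two asymptotic regimes of $\rho(\tau)=\tau^2/(1+\tau^2)$. Since \eqref{lin} has constant coefficients, Plancherel gives
\begin{equation*}
\|U(t)\|_s^2 = \int_{\R^d} \langle\xi\rangle^{2s} |\hat U(t,\xi)|^2 \, d\xi
\le C \int_{\R^d} \langle\xi\rangle^{2s} \exp\bigl(-c\rho(|\xi|)\,t\bigr) |\hat U_0(\xi)|^2 \, d\xi,
\end{equation*}
and all that remains is to estimate the right-hand side.

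On the high-frequency region $\{|\xi|\ge 1\}$ one has $\rho(|\xi|)\ge 1/2$, so the exponential factor produces decay at the full Sobolev level:
\begin{equation*}
\int_{|\xi|\ge 1} \langle\xi\rangle^{2s} \exp\bigl(-c\rho(|\xi|)t\bigr) |\hat U_0(\xi)|^2 \, d\xi \le \operatorname{e}^{-ct/2}\|U_0\|_s^2.
\end{equation*}
On the low-frequency region $\{|\xi|<1\}$ we have $\langle\xi\rangle^{2s}\le 2^s$ and $\rho(|\xi|)\ge |\xi|^2/2$, and the Hausdorff-Young inequality yields $|\hat U_0(\xi)|\le (2\pi)^{-d/2}\|U_0\|_{L^1}$. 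Hence
\begin{equation*}
\int_{|\xi|<1} \langle\xi\rangle^{2s} \exp\bigl(-c\rho(|\xi|)t\bigr) |\hat U_0(\xi)|^2 \, d\xi
\le C \|U_0\|_{L^1}^2 \int_{\R^d} \exp(-c|\xi|^2 t/2)\, d\xi
\le C'(1+t)^{-d/2}\|U_0\|_{L^1}^2,
\end{equation*}
where the last step uses the Gaussian scaling $\int_{\R^d}\operatorname{e}^{-c|\xi|^2 t/2}d\xi = C\,t^{-d/2}$ for $t\ge 1$ and the trivial bound $\int_{|\xi|<1}d\xi=:\omega_d$ for $0\le t\le 1$. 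Adding the two estimates and taking square roots (via $\sqrt{a+b}\le \sqrt a+\sqrt b$) produces the announced inequality with $\operatorname{e}^{-ct/2}$ in place of $\operatorname{e}^{-ct}$, which is equivalent up to adjusting the constant $c$.

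\textbf{Main obstacle.} There is essentially no genuine obstacle once Proposition \ref{propD} is granted; this is the classical Kawashima-style Sobolev-$L^1$ decay argument, as the citation of \cite{kawa83} in the statement indicates. The only point requiring mild care is the uniformity of the $(1+t)^{-d/2}$ rate for small $t$, which is why I treat the Gaussian integral by Gaussian scaling for $t\ge 1$ and by a constant volume bound for $t\in[0,1]$ separately. Everything else is bookkeeping.
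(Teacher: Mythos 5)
Your argument is correct and is precisely the "standard argument" the paper invokes by citing Kawashima: Plancherel, the pointwise bound \eqref{od1}, a high/low frequency split at $|\xi|=1$ with $\rho(|\xi|)\ge 1/2$ on the high side, and the $L^1\to L^\infty$ Fourier bound plus Gaussian scaling on the low side. The paper gives no further details, so there is nothing to reconcile; your write-up supplies exactly the omitted computation.
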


If $A_{11}(0,\omega)$ as well as $A(0,\omega)$ are constantly hyperbolic (D1), (D2), (D3) are also necessary for an estimate of the form \eqref{od1} to hold.

\begin{prop}
	\label{prop:hc2}
	Let $A$ as well as $A_{11}$ be constantly hyperbolic and assume (RH). If there exist $C,c>0$ such that each solution $U \in L^2$ of \eqref{lin}  satisfies \eqref{od1}. Then (D1), (D2), (D3) are  satisfied.
\end{prop}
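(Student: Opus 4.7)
The plan is to work entirely in Fourier space and extract, from the pointwise bound \eqref{od1}, spectral information on the matrix
\[
M(\xi) = -iA(0,\xi)+\mL(0),
\]
which one then translates into (D1), (D2), (D3) by perturbation theory in the three frequency regimes singled out by $\rho(\tau)=\tau^2/(1+\tau^2)$. Since \eqref{od1} is equivalent to the semigroup bound $\|e^{tM(\xi)}\|\le C^{1/2}e^{-c\rho(|\xi|)t/2}$, every eigenvalue $\lambda$ of $M(\xi)$ must satisfy $\Rep\lambda\le -c\rho(|\xi|)/2$, and no Jordan block of $M(\xi)$ can produce polynomial growth outrunning this exponential. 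Writing $\xi=r\omega$ with $\omega\in S^{d-1}$, the split is dictated by the asymptotics $\rho(r)\sim r^2$ as $r\to 0$ and $\rho(r)\to 1$ as $r\to\infty$.

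For $r\to\infty$ I would use that constant hyperbolicity of $A(0,\omega)$ provides smooth real eigenvalues $\lambda_k(\omega)$ and smooth spectral projectors $\Pi_k(\omega)$ on $S^{d-1}$. Rescaling, $r^{-1}M(r\omega)=-iA(0,\omega)+r^{-1}\mL(0)$ is a regular analytic perturbation of $-iA(0,\omega)$, and standard perturbation theory around the isolated clusters $-i\lambda_k(\omega)$ yields eigenvalues of $M(r\omega)$ of the form $-ir\lambda_k(\omega)+\nu_{k,\ell}(\omega)+O(r^{-1})$, where the $\nu_{k,\ell}(\omega)$ are the eigenvalues of the compression $\Pi_k(\omega)\mL(0)\Pi_k(\omega)$ to $\operatorname{Ran}\Pi_k(\omega)$. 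Since $\rho(r)\to 1$, the hypothesis forces $\Rep\nu_{k,\ell}(\omega)<0$ uniformly in $k,\ell,\omega$, which one matches with (D3) (strict dissipativity of $\mL(0)$ on each characteristic subspace of $A(0,\omega)$).

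For $r\to 0$ I would perturb instead around $\mL(0)=\dia(0_m,L(0))$, whose kernel is $\R^m\times\{0\}$ while the remaining spectrum lies in the open left half-plane by (RH). Second-order Rayleigh--Schr\"odinger expansion for the $m$ eigenvalues bifurcating from $0$ produces
\[
\lambda_j(r,\omega)=-ir\mu_j(\omega)+r^2\gamma_j(\omega)+O(r^3),
\]
where $\mu_j(\omega)$ are the (real) eigenvalues of $A_{11}(0,\omega)$ and $\gamma_j(\omega)$ is determined by the reduced resolvent of $\mL(0)$ at zero, essentially by the Schur complement of the lower-right $v$-block of $M$. Since $\rho(r)\sim r^2$, the hypothesis forces $\Rep\gamma_j(\omega)\le -c'<0$ uniformly in $\omega$, and unwinding this inequality in terms of $A_{11},A_{12},A_{21},L$ recovers the Kawashima--Shizuta-type non-degeneracy encoded in (D1). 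Eigenvalues of $M(r\omega)$ bifurcating from the stable spectrum of $\mL(0)$ remain in the open left half-plane by continuity and impose no further constraint.

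On the intermediate regime $|\xi|\in[r_0,r_1]$, $\rho$ is bounded below by a positive constant, so \eqref{od1} directly yields a uniform spectral gap for $M(\xi)$ on this compact set, which one matches with (D2). The main obstacle is the high-frequency analysis: one must ensure uniformity in $\omega$ of the perturbation expansion, which relies crucially on constant hyperbolicity of both $A(0,\omega)$ and $A_{11}(0,\omega)$ so that eigenvalue clusters stay isolated and projectors stay smooth, and one must transfer the semigroup bound to the spectral bound without losing constants through possible Jordan blocks of the perturbed operator -- it is precisely the pointwise nature of \eqref{od1}, together with the availability of smooth eigenprojectors, that makes this step work.
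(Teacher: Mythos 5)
Your overall strategy is exactly the paper's: convert \eqref{od1} into the semigroup bound $|\exp(\mM(\xi)t)|\le C\exp(-c\rho(|\xi|)t)$, read off eigenvalue bounds, and expand the spectrum of $\mM(r\omega)$ perturbatively as $r\to 0$ (around $\mL$) and $r\to\infty$ (around $-iA(\omega)$), using constant hyperbolicity to keep the clusters and projectors smooth in $\omega$. The large-frequency part is correct and matches the paper: the first-order corrections $\beta_l(\omega)$ are the eigenvalues of $J_{Ll}(\omega)\mL J_{Rl}(\omega)$ and must have negative real part, which by Lemma \ref{lem:ch} is (D3).

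However, you have swapped the roles of (D1) and (D2). The condition you extract from the second-order small-frequency expansion --- that the coefficients $\gamma_j(\omega)$ (the eigenvalues of $J_{Ll}(\omega)A_{12}(\omega)L^{-1}A_{21}(\omega)J_{Rl}(\omega)$) have strictly negative real part --- is precisely (D2), not (D1), and via Lemma \ref{lem:ch} and constant hyperbolicity of $A_{11}$ it yields the stable compatibility statement in (D2). Conversely, the "uniform spectral gap on compact annuli" you attribute to (D2) is in fact (D1): the requirement that every root of the dispersion relation \eqref{disp} has strictly negative real part for each $\xi\neq 0$, which follows immediately from the pointwise eigenvalue bound $\Rep\lambda(\xi)\le -c\rho(|\xi|)/2<0$ at every fixed nonzero $\xi$ (not only in an intermediate range). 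With these labels corrected, and with the passage from the eigenvalue condition (SC)$'$ to stable compatibility (SC) made explicit via Lemma \ref{lem:ch} (this is exactly where constant hyperbolicity is used), your argument coincides with the paper's proof.
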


We also give examples which show that different from the theory in \cite{kawa83,kawa04,yon04,kawa09} decay of Fourier-Laplace modes alone, condition (D1), is in general not sufficient for the decay estimate \eqref{od1}. In fact, we have the following more general result. (Note that this will not be shown until Section 6).

\begin{prop}
	\label{prop:none}
	For each of the statements (i), (ii) there exist respective matrices $A^j \in \R^{n \times n}$, $j=1,\ldots,d$ satisfying $(H)$ and $L \in \R^{r \times r}$ with spectrum contained in the open complex left half-plane, such that (i), (ii) are true.
	\begin{enumerate}
		\item [(i)] $A^j$, $j=1,\ldots,d$, and $\mathcal L=\operatorname{diag}(0_{n \times n},L)$ satisfy (D1), (D2) but decay estimate \eqref{od1} does not hold for all solutions of \eqref{lin}.
		\item[(ii)] $A^j$ and $\mathcal L=\operatorname{diag}(0_{n \times n},L)$ satisfy (D1), (D3) but decay estimate \eqref{od1} does not hold for all solutions of \eqref{lin}.
	\end{enumerate}
\end{prop}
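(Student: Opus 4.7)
The plan is to construct explicit low-dimensional counterexamples. The estimate \eqref{od1} is equivalent to a uniform-in-$\xi$ semigroup bound
\[
\|\exp(t\,\mathcal A(\xi))\|\le C\exp(-c\rho(|\xi|)t),\quad t\ge 0,
\]
for the symbol $\mathcal A(\xi)=-iA(0,\xi)+\mL(0)$ of the linearized system. Condition (D1), being a statement about the pointwise location of the Fourier--Laplace spectrum, controls only the eigenvalues of $\mathcal A(\xi)$. The counterexamples exploit the classical discrepancy between spectral location and operator-norm growth for non-normal matrix families: eigenvalues strictly in the left half-plane give no uniform control on $\|\exp(t\,\mathcal A(\xi))\|$ once the diagonalizing similarity of $\mathcal A(\xi)$ has condition number blowing up in the relevant frequency regime.

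I would take $d=1$ and work with a block symbol
\[
A=\begin{pmatrix} 0 & B\\ C & 0\end{pmatrix},\qquad \mL=\begin{pmatrix} 0 & 0\\ 0 & -I_r\end{pmatrix},
\]
with $B$, $C$ chosen so that (H) holds via a diagonal symmetrizer $S=\dia(\alpha I_m,\beta I_r)$ with $\alpha,\beta>0$, and so that every eigenvalue of $\mathcal A(\xi)$ has real part $\le -c\rho(|\xi|)$; the pointwise bound secures (D1) and the spectral condition on $L$ in (RH). The remaining freedom in $B$, $C$ is used to force a Jordan-type degeneracy exactly in the frequency regime we wish to attack, while keeping the symbol well-behaved elsewhere.

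For part (i) I tune the coupling so that the Jordan structure of $\mathcal A(\xi)$ degenerates as $|\xi|\to\infty$, which should be precisely the failure of (D3). The archetype is the $2\times 2$ block $\bigl(\begin{smallmatrix}-1 & |\xi|\\ 0 & -1\end{smallmatrix}\bigr)$ embedded inside the relaxation coupling, whose exponential has entries of size $|\xi|\,t\,e^{-t}$. Since $\rho(|\xi|)\to 1$ as $|\xi|\to\infty$, the factor $|\xi|t$ cannot be absorbed into $\exp(-c\rho(|\xi|)t)$, so choosing $\xi_n\to\infty$ at a fixed $t>0$ contradicts \eqref{od1}. Conditions (D1) and (D2) are then verified by a direct eigenvalue computation and by continuity on a compact frequency set. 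For part (ii) I instead place the near-degeneracy at a finite intermediate frequency $\xi_\ast$, introducing a small parameter $\eps>0$ so that $\mathcal A(\xi_\ast)$ is $\eps$-close to a non-trivial Jordan block and the associated spectral projectors have norm of order $\eps^{-1}$. At $t$ of order one the semigroup amplifies by $\gtrsim \eps^{-1}$, exceeding any fixed $C$, while (D1) and (D3) still hold since by design the symbol is normal and uniformly dissipative both near $\xi=0$ and as $|\xi|\to\infty$.

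The main obstacle is to make sure the chosen pair of dissipation conditions is genuinely satisfied while the remaining one fails in a controllable way. Since the precise statements of (D2) and (D3) encode uniform quantitative dissipation at intermediate and large frequencies, respectively, the construction must be delicate enough that a polynomial-in-$|\xi|$ (or $\eps^{-1}$) resolvent growth appears in exactly one frequency window without leaking into the other. The cleanest way to verify all of this is to use the multi-dimensional Jin--Xin relaxation system treated in Section 6, whose parameters can be tuned explicitly to realize either (i) or (ii). This is precisely why the present proposition is deferred: the required concrete $A^j$ and $L$ arise as linearizations of Jin--Xin systems, for which all three conditions admit transparent matricial reformulations.
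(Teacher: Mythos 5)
Your proposal has the right global shape in one respect: the paper does ultimately realize both counterexamples inside a one-dimensional linear Jin--Xin system, with $K=D_uF(0)=\bigl(\begin{smallmatrix}1&-1\\1&-1\end{smallmatrix}\bigr)$, $b=\dia(\kappa_1,\kappa_2)$, using Lemma \ref{prop:jx2} to translate (D1)--(D3) into (D1)$_2$--(D3)$_2$ and the explicit thresholds from \cite{FS} ((D2)$_2$ $\Leftrightarrow\kappa_1+\kappa_2>8$, (D3)$_2$ $\Leftrightarrow\kappa_1,\kappa_2>1$); taking $\kappa_1=1$, $\kappa_2>7$ gives case (i), taking $1<\kappa_1<7$, $\kappa_2=8-\kappa_1$ gives case (ii), and the failure of \eqref{od1} then follows from the necessity statement, Proposition \ref{prop:hc2}. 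But your proposed \emph{mechanism} of failure is not the operative one. Under (H) the family $-iA(\xi)$ is uniformly similar to an anti-Hermitian family, so $\|\exp(\mM(\xi)t)\|\le Ce^{Ct}$ uniformly in $\xi$; a block of the form $\bigl(\begin{smallmatrix}-1&|\xi|\\0&-1\end{smallmatrix}\bigr)$ with unbounded nilpotent part, and more generally any polynomial-in-$|\xi|$ non-normal amplification, is incompatible with (H). What actually breaks \eqref{od1} when (D3) fails while (D1) holds (under constant hyperbolicity) is spectral: some eigenvalue satisfies $\Rep\lambda(\xi)=\Rep\beta_l(\omega)+o(1)$ with $\Rep\beta_l=0$, so the spectral gap degenerates as $|\xi|\to\infty$ while $\rho(|\xi|)\to1$. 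This is a statement about eigenvalue expansions, not about condition numbers of diagonalizers.

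The more serious error is in your case (ii): you place the defect at a \emph{fixed} intermediate frequency $\xi_\ast$ with a \emph{fixed} small $\eps$. For a fixed choice of matrices this only inflates the admissible constant $C$ in \eqref{od1}; on any compact set of frequencies bounded away from $0$ and $\infty$, condition (D1) alone already yields the estimate (Lemma \ref{lem:mw}), so no counterexample can be manufactured there. The failure in case (ii) must occur in the limit $|\xi|\to0$, where $\rho(|\xi|)\sim|\xi|^2$ and the relevant quantity is the second-order coefficient $\zeta_l(\omega)$ in the expansion $\lambda(\xi)=-i|\xi|\mu_l(\omega)+|\xi|^2\zeta_l(\omega)+o(|\xi|^2)$: failure of (D2) with (D1) intact forces $\Rep\zeta_l=0$ for some branch, whence $\Rep\lambda(\xi)=o(|\xi|^2)$ and the rate $-c|\xi|^2$ is unattainable. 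Finally, the proposal never exhibits concrete matrices nor verifies that exactly the intended pair of conditions holds while the third fails; that verification, via the explicit parameter ranges above, is the entire content of the proof, so as written the argument is incomplete.
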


\textbf{Non-linear stability.}
The following is the main result of the present work. It is proven in Section 4.

\begin{theo}
	\label{main}
	Consider $d \ge 3$, $s> d/2+2$, $\bar U \in \mU$ with $Q(\bar U)=0$ and let \eqref{eq:haupt} satisfy conditions (H), (RH), (D1), (D2), of Section 3 and (D3)$'$ of Section 4 with respect to the equilibrium state $\bar U$. Then $\bar U$ is $H^s\cap L^1 \to H^{s-1}$ stable with rate $(1+t)^{-d/4}$.
\end{theo}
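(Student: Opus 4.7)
The plan is a continuity argument based on an a priori estimate for the time-weighted functional
\[\Phi(T):=\sup_{0 \le t \le T}\Bigl\{\|U(t)\|_{s}+(1+t)^{d/4}\|U(t)\|_{s-1}\Bigr\}\]
(recall $\bar U=0$). Standard symmetric-hyperbolic theory, which applies thanks to (H) and $s>d/2+1$, produces a local solution $U \in C^0([0,T^*),H^s) \cap C^1([0,T^*),H^{s-1})$ for $U_0 \in H^s$ small. The target is the bound
\[\Phi(T) \le C_0\|U_0\|_{H^s \cap L^1}+C_1\Phi(T)^2 \quad\text{whenever } \Phi(T)\le\eps_0,\]
from which a continuity argument forces $\Phi(T)\le 2C_0\|U_0\|_{H^s \cap L^1}$ globally for sufficiently small data, giving simultaneously global existence and the advertised $H^{s-1}$-decay.

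For the $\|U(t)\|_s$-component of $\Phi(T)$ I would follow the paradifferential approach of \cite{FS,sro23}. After paralinearizing,
\[U_t+\sum_{j=1}^d T_{A^j(U)}\partial_{x_j}U=\mL(0)U+\mathcal{R}(U),\qquad \|\mathcal{R}(U)\|_s \lesssim \|U\|_{L^\infty}\|U\|_s,\]
and using the symbolic symmetrizer $S(U,\omega)$ of (H) together with a skew-adjoint paradifferential compensator of Kawashima--Shizuta type whose symbol encodes the directions left undamped by $\mL(0)$ and recovered at intermediate-frequency scales through (D2) and (D3)$'$, one constructs a paradifferential energy $\mE_s(U)\simeq \|U\|_s^2$ together with a matching dissipation $\mD_s(U)\simeq \|\sqrt{\rho(D)}\Lambda^s U\|^2$ satisfying
\[\frac{d}{dt}\mE_s(U)+c\,\mD_s(U) \le C\bigl(\|U\|_{L^\infty}+\|\nabla U\|_{L^\infty}\bigr)\|U\|_s^2.\]

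For the decay half I would rewrite the system in semigroup form as
\[U_t = L_{\mathrm{lin}}U+N(U),\qquad L_{\mathrm{lin}}:=-\sum_{j=1}^d A^j(0)\partial_{x_j}+\mL(0),\]
with quadratic remainder $N(U)=-\sum_j(A^j(U)-A^j(0))U_{x_j}+(\mL(U)-\mL(0))U$. Duhamel's formula combined with Proposition \ref{prop:sobd} yields
\[\|U(t)\|_{s-1}\le C(1+t)^{-d/4}\|U_0\|_{H^s\cap L^1}+C\!\int_0^t\!\bigl[(1+t-\tau)^{-d/4}\|N(U)(\tau)\|_{H^{s-1}\cap L^1}+e^{-c(t-\tau)}\|N(U)(\tau)\|_{s-1}\bigr]d\tau.\]
The Moser-type estimates $\|N(U)\|_{L^1}\lesssim \|U\|_{s-1}^2$ (Cauchy--Schwarz on $U$ times $\nabla U$) and $\|N(U)\|_{s-1}\lesssim \|U\|_{L^\infty}\|U\|_s$ (valid since $s-2>d/2$), together with the standard convolution bound
\[\int_0^t(1+t-\tau)^{-d/4}(1+\tau)^{-d/2}\,d\tau \le C(1+t)^{-d/4},\]
which requires $d/2>1$ and hence precisely $d\ge 3$, then produce $(1+t)^{d/4}\|U(t)\|_{s-1}\le C(\|U_0\|_{H^s\cap L^1}+\Phi(T)^2)$.

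The main obstacle is the construction of the paradifferential energy and its compensator. Unlike the classical Kawashima--Yong framework, the hyperbolic operator admits only a symbolic (rather than matrix-polynomial) symmetrizer, so the compensator itself must be realized as a paradifferential operator; its commutator with the quasilinear principal part then produces symbols of positive order that have to be absorbed either by $\mD_s$ or by the smallness factor $\|U\|_{L^\infty}+\|\nabla U\|_{L^\infty}$. A secondary delicate point for small dimension $d=3,4$ is that $(1+t)^{-d/4}$ is not integrable, so naive Gronwall closure of the energy estimate fails; one must exploit $\mD_s$ more carefully---for instance by bounding $\|U\|_{L^\infty}\|U\|_s^2$ via Young's inequality against $\mD_s(U)$ and a factor $\|U\|_{L^\infty}^2$ which decays like $(1+t)^{-d/2}$ and is therefore integrable for $d\ge 3$---so that the borderline nonlinear contribution is absorbed into the dissipation rather than fed into Gronwall. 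Once these are in place, the energy and decay estimates combine to yield the required a priori bound on $\Phi(T)$.
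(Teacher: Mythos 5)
Your proposal is correct and follows essentially the same route as the paper: a bootstrap combining (i) a Duhamel/Green's-formula decay estimate for $\|U(t)\|_{s-1}$ built on the linear semigroup bound of Proposition \ref{prop:sobd}, and (ii) a paradifferential energy estimate whose dissipation controls $\|U\|_s^2$ only up to a low-frequency deficit $\|U\|_{s-1}^2$, which is then integrable in time thanks to the $(1+t)^{-d/2}$ decay and $d\ge 3$ (this is exactly the content of Propositions \ref{prop:main}--\ref{prop:energy}). The only cosmetic differences are your bootstrap functional $\Phi(T)$ versus the paper's bound \eqref{bound}, and your framing of the $U$- and $\xi$-dependent symmetrizer symbol $\mD(U,\xi)$ (realized via para-differential calculus and a strong G{\aa}rding inequality, with the $U$-dependence required only at high frequencies through (D3)$'$) as a ``Kawashima--Shizuta compensator'' --- a constant compensating matrix is precisely what is unavailable here, but your description of it as a full paradifferential symbol matches the paper's construction.
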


\textbf{Comparison with the entropy dissipative case.} In Section 5, we show that for systems that admit an entropy and satisfy the structural conditions (i)-(iii) of \cite{yon04}, which we will call entropy dissipative, the Kawashima-Shizuta condition 
\begin{enumerate}
	\item[(K)]  $\forall \omega \in S^{d-1}, \lambda \in \C:\ker((\lambda-A(0,\omega))) \cap \ker(D_UQ(0))=\{0\}$
\end{enumerate}
(structural condition (iv) in \cite{yon04}) is in fact equivalent to conditions (D1)-(D3). 

\begin{prop}
	\label{prop:ed}
	Let \eqref{eq:haupt} be entropy dissipative. Then the system satisfies (H) and (RH) and each of the conditions (D1), (D2), (D3) is equivalent to (K) and implies (D3)$'$.
\end{prop}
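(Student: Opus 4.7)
The plan is to handle the four assertions in sequence, exploiting the rigid structural consequences of entropy dissipativity at every step.

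First I would verify that entropy dissipativity implies (H) and (RH). The Hessian $E=D^2\eta(\bar U)$ of the entropy at the equilibrium is symmetric positive definite and, by the entropy identity, $EA^j(\bar U)$ is symmetric for each $j$. Freezing the state at $\bar U$ in the symbolic symmetrizer candidate $S(U,\omega)=E(U)$ and extending smoothly to a neighbourhood of $\bar U$ yields precisely the matrix family demanded in (H), including uniform two-sided bounds. For (RH), the block structure $Q=(0,q)^t$ is part of the definition of entropy dissipativity (structural condition (i) of \cite{yon04}); moreover $Eq_v(\bar U)$ is symmetric with nonpositive spectrum, and dissipativity rules out the zero eigenvalue on the $v$-block, so $q_v(\bar U)$ has spectrum in the open left half-plane.

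Next, I would prove the equivalence of (K) with each of (D1), (D2), (D3). For the implication $(K)\Rightarrow (D_j)$ I would use Kawashima's compensator construction: in the symmetric hyperbolic setting induced by the entropy, (K) is equivalent to the existence of a smooth skew-symmetric $K(\omega)$ on $S^{d-1}$ such that
\begin{equation*}
\Rep\bigl(K(\omega)A(0,\omega)\bigr)+\Rep\mL(0)\ge c\,I_n
\end{equation*}
for some $c>0$. Testing $\hat U$ against the Lyapunov functional built from $E$ perturbed by $\tau K(\omega)/(1+\tau^2)^{1/2}$, $\tau=|\xi|$, yields the pointwise Fourier decay $|\hat U(t,\xi)|^2\le C e^{-c\rho(|\xi|)t}|\hat U(0,\xi)|^2$, which encodes the dissipative information at all three frequency regimes simultaneously and thus implies (D1), (D2), and (D3). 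For the converse direction, I would argue by contradiction: if (K) fails, there exist $\omega\in S^{d-1}$, $\lambda\in\C$, and $v\neq 0$ with $A(0,\omega)v=\lambda v$ and $\mL(0)v=0$. The plane wave $U(t,x)=e^{i\tau\omega\cdot x-i\tau\lambda t}v$ is an undamped solution of \eqref{lin} for every $\tau\ge 0$, violating the decay estimate \eqref{od1} at all wave numbers; this simultaneously contradicts (D1), (D2), and (D3), since each of these characterizes dissipation in its respective regime of $\tau$ via Proposition \ref{prop:hc2}. Hence each condition individually forces (K).

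Finally, for the upgrade to (D3)$'$, I would exploit the fact that in the entropy-dissipative case the symmetrizer $E$ is \emph{constant}, so that every quantity appearing in (D3) that in the general case depends smoothly on $(U,\omega)$ collapses to the frozen data. Consequently the refined uniformity demanded in (D3)$'$ (needed to propagate the linear estimate through the para-differential nonlinear scheme of Section 4) reduces, modulo the compensator bound already obtained above, to checking a fixed quantitative inequality on the Hermitian pencil $(EA(0,\omega),E\mL(0))$; this inequality is a direct consequence of the compensator estimate established in the previous paragraph.

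The main obstacle I anticipate is the implication that any \emph{single} one of (D1), (D2), (D3) forces (K). Each condition a priori only controls one frequency regime, so the argument must use the strong structural rigidity of entropy dissipativity (constant symmetrizer, symmetric $EA^j(0)$, symmetric nonpositive $E\mL(0)$) to show that any failure of (K) produces an undamped mode whose presence is visible in \emph{every} regime. The clean way to do this is the plane-wave construction above, combined with Proposition \ref{prop:hc2} to translate failure of (D$_j$) back to failure of (K).
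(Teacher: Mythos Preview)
Your route through the decay estimate has a genuine gap in the direction $(K)\Rightarrow(D2),(D3)$. You argue: compensator $\Rightarrow$ pointwise Fourier decay \eqref{od1} $\Rightarrow$ (D1),(D2),(D3). The first arrow is fine, and decay immediately gives (D1). But the implication ``decay $\Rightarrow$ (D2),(D3)'' is the content of Proposition~\ref{prop:hc2}, which is proved in the paper only under the hypothesis that both $A$ and $A_{11}$ are \emph{constantly} hyperbolic, and entropy dissipativity does not guarantee this. Without constant multiplicities, the eigenvalue expansions at $|\xi|\to 0,\infty$ tell you that the projected matrices $J_{Ll}hJ_{Rl}$ have eigenvalues with negative real part, but stable compatibility (Definition~\ref{def:cs}) demands that some choice of projectors makes $\Rep(J_{Ll}hJ_{Rl})$ negative \emph{definite}; for non-Hermitian $h$ this is strictly stronger. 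The paper bypasses this difficulty altogether: after the block-diagonal reduction one has $A(\omega)$ symmetric and $\Rep\mL\le 0$ with $\ker(\Rep\mL)=\ker\mL$, so for \emph{unitary} projectors condition~(a) of \eqref{proj} is automatic (the projected $-iA$ is anti-Hermitian) and condition~(b) reads $\Rep(J_{Rl}^*\mL J_{Rl})=J_2^*(\Rep L)J_2<0$, which is exactly~(K). This direct algebraic verification is what you are missing; your detour through \eqref{od1} cannot close the loop without an additional argument you have not supplied. (Your converse direction $(D_j)\Rightarrow(K)$ via the undamped plane wave is essentially correct, though the right reference is to Lemmas~\ref{lem:mw}--\ref{lem:lw}, not Proposition~\ref{prop:hc2}.)

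Your treatment of (D3)$'$ is also not right. The entropy Hessian $D^2\eta(U)$ is \emph{not} constant in $U$; what is true is that it is independent of $\omega$, but it varies smoothly with the state. Consequently nothing ``collapses to the frozen data'': condition (D3)$'$ genuinely asks for stable compatibility on a full neighbourhood $\mU_0\times S^{d-1}$, and the compensator estimate you established lives only at $U=\bar U$. The paper's argument instead exploits that the \emph{symmetric structure persists} for $U$ near $\bar U$ (since $D^2\eta(U)A^j(U)$ is symmetric for all $U$), so one can choose smooth unitary projectors $\mJ_{Rl}(U,\omega)$; then $\Rep(-i\mJ_{Ll}\check A\mJ_{Rl})=0$ holds identically, and the strict inequality $\Rep(\mJ_{Ll}\check\mL\mJ_{Rl})<0$, already established at $(0,\bar\omega)$, propagates to a neighbourhood by continuity. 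Finally, your sketch for (RH) is dimensionally off: $E=D^2\eta(\bar U)$ is $n\times n$ while $q_v$ is $r\times r$, so ``$Eq_v$'' is not defined; one must first pass to normal form and invoke the block-diagonality of $T^*ET$ established in \cite{yon04}, as the paper does.
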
 

On the other hand condition (D3) always implies (K). Thus  as consequence of Proposition \ref{prop:none} (ii) we obtain:

\begin{prop}
	\label{prop:k}
	There exist smooth functions $f^1,\ldots,f^d,Q: \R^n \to \R^n$, which satisfy (H), (RH) and (K) but the linearization of \eqref{eq:haupt} does not admit a decay estimate of the form \eqref{od1}.
\end{prop}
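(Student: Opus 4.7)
The plan is to leverage Proposition \ref{prop:none}(ii) directly and encode its matrices as linear flux and source terms in \eqref{eq:haupt}. I would first apply Proposition \ref{prop:none}(ii) to extract matrices $A^1,\ldots,A^d \in \R^{n\times n}$ satisfying (H) and $L \in \R^{r\times r}$ with spectrum in the open complex left half-plane, such that the associated linear system satisfies (D1) and (D3) while failing the decay estimate \eqref{od1} for some $L^2$ solution. Then I would set
\[
f^j(U) := A^j U, \qquad Q(U) := \begin{pmatrix} 0_{\R^m} \\ L v \end{pmatrix}, \quad U = (u,v),\ u \in \R^m,\ v \in \R^r.
\]
These are smooth (in fact linear), so the system \eqref{eq:haupt} coincides with its own linearization at $\bar U=0$, and the failure of \eqref{od1} for the linearization is inherited verbatim from Proposition \ref{prop:none}(ii).

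Next I would verify the three structural hypotheses on the candidate system. Condition (H) holds because $Df^j \equiv A^j$ and the symbolic symmetrizer supplied by Proposition \ref{prop:none}(ii) transfers unchanged. Condition (RH) is immediate from the prescribed block form of $Q$ together with the fact that $D_v q \equiv L$ has spectrum in the open left half-plane. For (K), I would invoke the implication (D3)$\Rightarrow$(K) announced in the paragraph preceding the proposition statement: since our data satisfies (D3) by construction, (K) follows at no extra cost.

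The main obstacle is therefore subsumed in two prior statements that I would appeal to without reproving here, namely Proposition \ref{prop:none}(ii) itself, which is the genuinely constructive ingredient (its proof being deferred to Section 6), and the algebraic implication (D3)$\Rightarrow$(K), which is a purely spectral/kernel computation independent of any Fourier-decay analysis: a non-trivial vector in $\ker(\lambda-A(0,\omega))\cap\ker(D_UQ(0))$ would furnish a Fourier mode on which the compensator required by (D3) vanishes identically, contradicting the coercivity built into (D3). Once both of these ingredients are granted, the argument above is merely a two-line packaging.
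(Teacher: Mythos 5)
Your proposal is correct and follows essentially the same route as the paper: take the matrices supplied by Proposition \ref{prop:none}(ii), define the linear system $f^j(U)=A^jU$, $Q(U)=\mL U$, and reduce the verification of (K) to the implication (D3)$\Rightarrow$(K). Your sketch of that implication is precisely the paper's argument made slightly less explicit: a nonzero $v=J_{R}(\bar\omega)w$ in $\ker(\lambda-A(\bar\omega))\cap\ker\mL$ gives $\mL J_{R}(\bar\omega)w=0$, hence $w^*\Rep\bigl(J_{L}(\bar\omega)\mL J_{R}(\bar\omega)\bigr)w=0$ for every right projector, contradicting the strict negativity demanded by (D3).
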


\textbf{Application to Jin-Xin relaxation systems.}
Consider a conservation law
\begin{equation} 
	\label{cl}
	u_t+\sum_{j=1}^d F^j(u)_{x_j}=0, \quad u:\R^d \to \R^m, F:\R^m \to \R^m
\end{equation}
which is Friedrichs symmetrizable, i.e. there exists a symmetric positive definite family of matrices $(s(u))_{u \in \mU}$ such that $s(U)D_uF^j(u)$ is symmetric, $j=1,\ldots,d$. Jin and Xin \cite{jin95} introduced a corresponding system of balance laws (relaxation laws) in order to examine certain numerical schemes for \eqref{cl}, namely
\begin{equation}
	\label{xc}
	\begin{aligned}
		u_t+\sum_{j=1}^dv^j_{x_j}&=0,\\
		v^j_t+b^j u_{x_j}&=-\frac{1}{\eps}(v^j-F^j(u)),\quad 1 \le j \le d,
	\end{aligned}
\end{equation}
with $u,v^j:[0,T] \times \R^d \to \R^m$, $j=1,\ldots,d$ relaxation rate $\eps>0$ and $b^j \in (0,\infty)$. For numerical purposes the latter should be chosen as small as possible, while still satisfying the dissipation condition
\begin{equation}
\label{disp1}
\big(\delta_{jk}b^j-D_uF^j(u)D_uF^k(u)\big)_{1\le j,k \le d} \ge 0.
\end{equation}

We will show that for a rest state $\bar u$ of \eqref{cl} the strict version of \eqref{disp1} at $\bar u$, namely
\begin{equation}
	\label{disp2}
	\Rep\big(\delta_{jk}b^j-D_uF^j(\bar u)D_uF^k(\bar u)\big)_{1\le j,k \le d} > 0
\end{equation}
implies (D1), (D2), (D3) for \eqref{xc} at $(u,v)=(\bar u,0)$ and thus non-linear asymptotic stability of this rest state in the sense of Theorem \ref{main} (In fact, we get the stronger result Lemma \ref{prop:jx2}). For $d=1$ the stability actually follows from \cite{yon01} and \cite{yon04}. To illustrate the contrast with the beautiful theory of Kawashima and Yong we will prove that this is not the case for $d \ge 2$ (except for $D_uf(\bar u)=0$) as the systems are not entropy dissipative for $d \ge 2$. In other words:

Each hyperbolic conservation law in $d \ge 3$ space dimensions with a rest state $\bar u$ satisfying $D_uf(\bar u)\neq 0$ leads to an example, in form of a corresponding Jin-Xin system, where our theory applies and the one of \cite{kawa83,kawa85,yon01,kawa04,yon04,kawa09} does not. 

The statements above are summarized in the following two propositions. 
\begin{prop}
	\label{prop:xc}
	Let $\bar u \in \R^m$ with $f(\bar u)=0$. Assume \eqref{disp} and $d\ge3$. Then the rest $\bar U=(\bar u,0)$ of \eqref{xc} satisfies the assertion of Theorem \ref{main}. 
\end{prop}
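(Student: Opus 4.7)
The plan is to verify the four structural hypotheses of Theorem \ref{main} at the rest state $\bar U = (\bar u, 0)$: namely (H), (RH), (D1), (D2), and (D3)$'$. As the paragraph preceding the proposition indicates, I will in fact verify the stronger triple (D1), (D2), (D3) directly from the strict dissipation condition \eqref{disp2}; that (D3) implies (D3)$'$ in the present setting follows by the same mechanism as in Proposition \ref{prop:ed}. An appeal to Theorem \ref{main} then completes the proof, the hypothesis $d \ge 3$ being inherited from its own statement.

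\textbf{Structural assumptions.} The principal symbol $A(\xi) = \sum_j \xi_j A^j$ of \eqref{xc} has constant coefficients and the very simple off-diagonal block structure given by $I_m$ in the $(u,v^j)$-block and $b^j I_m$ in the $(v^j,u)$-block. A constant positive-definite symmetrizer is therefore
\begin{equation*}
    S \;=\; \operatorname{diag}\bigl(I_m,\; (b^1)^{-1} I_m,\; \ldots,\; (b^d)^{-1} I_m\bigr),
\end{equation*}
and a direct computation shows that $SA^j$ is symmetric for each $j$, so (H) holds with bounded, smooth $S$. For (RH) I carry out the normal-form transformation \eqref{v} with $\tilde v^j = v^j - F^j(u)$; the source becomes $\mL V = (0, -\eps^{-1} \tilde v)$, so the lower-right block is $L = -\eps^{-1} I_{md}$, whose spectrum $\{-\eps^{-1}\}$ lies strictly in the open left half-plane. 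This is (RH).

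\textbf{Dissipation.} It remains to derive (D1)--(D3) from \eqref{disp2}. After normal form the linearization at $\bar U$ has Fourier symbol $M(\xi) = -i \tilde A(\xi) + \mL(0)$ with $\mL(0) = \operatorname{diag}(0_{m},-\eps^{-1} I_{md})$, and I need the three Fourier-decay conditions of Section 3. For high frequencies the $v$-component is damped at rate $\eps^{-1}$, while the off-diagonal coupling transfers dissipation to the $u$-component because the product $A^j A^k$ maps $u$ into $v$ and $v$ into $u$; this yields the $O(1)$ decay required for (D3). For low frequencies $\ker \mL(0)$ is exactly the $u$-subspace, so second-order perturbation theory, or equivalently a Schur-complement elimination of the $\tilde v$-component in $M(\xi)$, reduces the problem to a quadratic form on $u$; a block computation shows that this effective symbol is, up to a positive scalar, $\sum_{j,k} \xi_j \xi_k (\delta_{jk} b^j - DF^j(\bar u) DF^k(\bar u))$, whose real part is strictly positive by \eqref{disp2}. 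This delivers the $|\xi|^2$-dissipation for small wave numbers, i.e.\ (D1). The intermediate regime (D2) is a compactness statement on $S^{d-1}$: one checks the Kawashima--Shizuta-type non-degeneracy $\ker(\lambda - A(\bar U,\omega)) \cap \ker \mL(0) = \{0\}$ for all $\omega \in S^{d-1}$ and $\lambda \in \C$. For Jin--Xin, an eigenvector with $\tilde v = 0$ satisfies (back in the original variables) $\omega_j b^j u = \lambda \,DF^j(\bar u) u$ for each $j$, and feeding this into the quadratic form from \eqref{disp2} produces a contradiction unless $u = 0$.

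\textbf{Main obstacle.} The delicate step is the low-frequency calculation: I must verify that the second-order effective symbol obtained by formally eliminating the $v$-component reproduces \emph{exactly} the matrix on the left-hand side of \eqref{disp2}, so that strict positivity transfers uniformly in $\omega = \xi/|\xi| \in S^{d-1}$. Once this identification of the effective symbol with the dissipation matrix is carried out in a basis adapted to the block structure of \eqref{xc}, the low-frequency bound on $\Rep\lambda(M(\xi))$ follows, the three frequency regimes patch together to give \eqref{od1}, and Theorem \ref{main} applies to yield the asserted stability.
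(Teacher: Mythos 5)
Your overall strategy (verify (H), (RH) and the dissipation conditions, then invoke Theorem \ref{main}) is the paper's strategy, and your treatment of (H), (RH) and of the low-frequency effective symbol $\sum_{j,k}\xi_j\xi_k(\delta_{jk}b^j-D_uF^jD_uF^k)$ — which is indeed $-(A_{12}L^{-1}A_{21})$ contracted on $\omega$, i.e.\ the content of the paper's condition (D2), not (D1) — is essentially correct. But there are two genuine gaps. First, you propose to handle the intermediate frequency regime (the paper's (D1)) by checking the Kawashima--Shizuta non-degeneracy $\ker(\lambda-A(\omega))\cap\ker\mL=\{0\}$. That condition only controls eigenvectors lying in $\ker\mL$; for a purely imaginary eigenvalue of $-iA(\xi)+\mL$ the eigenvector need not lie in $\ker\mL$ unless the system is simultaneously symmetrizable (entropy dissipative), which by Proposition \ref{prop:nd} the multi-dimensional Jin--Xin system is \emph{not} when $D_uF(\bar u)\neq 0$. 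Indeed Proposition \ref{prop:k} exhibits systems satisfying (K) with no decay, so (K) cannot be the verification here. The paper instead excludes purely imaginary roots of the quadratic pencil $\lambda^2 I_m+\lambda I_m+\mB(\xi)+iK(\xi)$ directly, by conjugating with the Friedrichs symmetrizer $s(0)^{1/2}$ of the underlying conservation law and using the bound $\beta(\omega)^{1/2}I_m\pm\tilde K(\omega)>0$.

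Second, your high-frequency argument for (D3) ("the off-diagonal coupling transfers dissipation to the $u$-component") is a heuristic, not a proof, and the conclusion is not automatic: Proposition \ref{prop:none}(i) is built precisely from a Jin--Xin-type system in which (D1) and (D2) hold but (D3) fails. What is actually required is to diagonalize $A(\omega)$ (eigenvalues $0$ and $\pm\beta(\omega)^{1/2}$) and show that the projected blocks of $\mL$, namely $-I$ and $\tfrac12(\pm\mB(\omega)^{-1/2}K(\omega)-I_m)$, have negative-definite real part; this again hinges on the spectral bound $|\mu|<\beta(\omega)^{1/2}$ for the (real, by Friedrichs symmetrizability of $F$) eigenvalues $\mu$ of $K(\omega)$, which one extracts from \eqref{disp2}. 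In short, the Friedrichs symmetrizability of \eqref{cl} is an essential ingredient in both the (D1) and (D3) verifications and appears nowhere in your proposal; correspondingly, the "main obstacle" is not the Schur-complement identification (which is routine) but the high- and intermediate-frequency regimes. Your appeal to "the mechanism of Proposition \ref{prop:ed}" for (D3)$'$ is also misplaced, since that mechanism uses the entropy; here (D3)$'$ follows from (D3) by constant hyperbolicity of $A(U,\omega)$, or directly from the characterization (D3)$_2$.
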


\begin{prop}
	\label{prop:nd}
	Let $d \ge 2$ and \eqref{xc} be entropy dissipative at a rest state $\bar U=(\bar u,0)$, $f(\bar u)=0$ (i.e. conditions (i)-(iii) in \cite{yon04} are satisfied). Then $D_uf(\bar u)=0$.
\end{prop}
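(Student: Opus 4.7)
I proceed in two steps: first use the existence of a convex entropy $\eta$ (condition (i) of \cite{yon04}) to severely restrict the form of $H := D^2\eta(\bar U)$, and then plug this restricted form into the dissipativity condition (iii) to force $D_u F^j(\bar u) = 0$ for every $j$.

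Step 1 (rigidity of the entropy). The Jin--Xin Jacobians $A^j := Df^j$ are constant block matrices whose only non-zero $m \times m$ blocks are $I_m$ in the $(u,v^j)$-slot and $b^j I_m$ in the $(v^j,u)$-slot. Decompose $H$ into the blocks $P$ (on $(u,u)$), $Q^l$ (on $(u,v^l)$) and $R^{jl}$ (on $(v^j,v^l)$). Requiring $HA^j$ to be symmetric block by block yields, for that particular $j$: $Q^j = (Q^j)^t$, $P = b^j R^{jj}$, and both $Q^l = 0$ and $R^{jl} = 0$ for every $l \neq j$. Since $d \ge 2$, applying these constraints for two different indices forces all off-diagonal blocks of $H$ to vanish; hence $H = \operatorname{diag}(P, R^{11}, \ldots, R^{dd})$ with $R^{jj} = P/b^j$, and $P > 0$ by convexity of $\eta$.

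Step 2 (dissipation forces $A^j = 0$). With $A^j := D_u F^j(\bar u)$, the linearized source at $\bar U$ is
$$D_U Q(\bar U) = \frac{1}{\epsilon}\begin{pmatrix} 0 & 0 \\ \mathcal A & -I_{dm}\end{pmatrix}, \qquad \mathcal A = (A^1,\ldots,A^d)^t.$$
Using the block-diagonal $H$ from Step 1, the symmetric part $\mathcal M := \operatorname{Re}(H\,D_U Q(\bar U))$ has vanishing $(u,u)$-block, $(v,v)$-block $-\operatorname{diag}(R^{jj})/\epsilon$ (negative definite), and $(u,v^j)$-blocks $(A^j)^t R^{jj}/(2\epsilon)$. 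Condition (iii) asserts $\mathcal M \le 0$; a Schur-complement reduction with respect to the invertible negative-definite $(v,v)$-block converts this to
$$\sum_{j=1}^d (A^j)^t R^{jj} A^j \le 0.$$
Since each $R^{jj} > 0$, every summand is symmetric positive semi-definite, so every summand must vanish and $A^j = 0$ for every $j$, i.e.\ $D_u F^j(\bar u) = 0$.

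The argument is entirely linear-algebraic; the only real subtlety is correctly reading the structural conditions of \cite{yon04} so that (iii) reduces exactly to $\operatorname{Re}(H\,D_U Q(\bar U)) \le 0$. The hypothesis $d \ge 2$ enters only in Step 1 when combining constraints from two different Jacobians, matching the well-known fact that one-dimensional Jin--Xin systems \emph{do} admit an entropy. The block bookkeeping in Step 1 is the main practical hurdle but no genuine analytic obstacle appears.
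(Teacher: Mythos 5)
Your proof is correct, and it takes a genuinely different route from the paper's. The paper first passes to the normal form \eqref{nf}, invokes the fact (from Yong's Theorem 2.1, i.e.\ the combination of conditions (i)--(iii)) that the normal-form symmetrizer is block-diagonal between the conservative and non-conservative variables, and then extracts $D_uF^j(\bar u)=0$ purely from the symmetry of $SA^j$ for the normal-form Jacobians --- which contain $D_uF$ because the transformation $T$ mixes $F$ into the flux. You instead stay in the original variables, where the Jin--Xin Jacobians $\tilde A^j$ are so sparse that condition (ii) alone pins down $D^2\eta(\bar U)$ completely (full block-diagonalization with $R^{jj}=(b^j)^{-1}P$, using $d\ge 2$ to play two indices against each other); since $F$ then appears only in the source, you must invoke the linearized form of condition (iii), $\Rep\big(D^2\eta(\bar U)\,D_UQ(\bar U)\big)\le 0$, to kill the coupling blocks. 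Your version is more self-contained (no normal form, no appeal to Yong's structure theorem) at the price of using (iii) explicitly; your block computations and the Schur-complement step all check out (the latter could even be shortened: a negative semidefinite symmetric matrix with a vanishing diagonal block must have vanishing corresponding off-diagonal blocks, giving $R\mathcal A=0$ directly).
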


\section{The linearized system}

In this Section we will prove Propositions \ref{propD} and \ref{prop:hc2}. In the following $A^j,A(\omega),\mL,...$ are always evaluated at $0$.

First we need to formulate the dissipation conditions (D1), (D2), (D3). To do this in a concise manner we introduce some terminology. 

Let $\Omega \subset \R^k$, $k \in \N$ be a smooth connected submanifold and consider a family of smooth matrices $H:\Omega \to \C^{N \times N}$, $N \in \N$, such that for each $\omega \in \Omega$ all eigenvalues $\mu_1(\omega),\ldots,\mu_{\bar l(\omega)}(\omega)$ (here the $\mu_l$ are pairwise distinct) of $H(\omega)$ are purely imaginary and semi-simple, $l=1,\ldots,\bar l$. For fixed $\bar \omega \in \Omega$  let $P_l(\omega)$ be the total projection
on the $\mu_l(\bar \omega)$-group of eigenvalues (cf. \cite{kat76}). $P_l$ is then well-defined on an neighbourhood $\mathcal V_{\bar \omega} \subset \Omega$ of $\bar \omega$. For $\omega \in\mathcal V_{\bar \omega}$ and a basis $\phi_{l1}(\omega),\ldots,\phi_{l\alpha_l}(\omega) \in \C^N$ of $R(P_l(\bar \omega))$, $\alpha_l$ being the multiplicity of $\mu_l(\bar \omega)$, we call  
$$J_{Rl}(\omega)=\begin{pmatrix} \phi_{l1}(\omega) & \ldots &\phi_{l\alpha_l}(\omega)\end{pmatrix} \in \C^{N \times \alpha_l}$$
a right projector on $R(P_l(\omega))$ and 
$$J_{Ll}(\omega)=\begin{pmatrix} \psi_{l1}(\omega)\\ \vdots\\ \psi_{l\alpha_l}(\omega) \end{pmatrix} \in \C^{\alpha_l \times N},$$
the corresponding left projector, where $\psi_{l1},\ldots,\psi_{l\alpha_l} \in \C^{1 \times N}$ is the dual basis to $\phi_{l1},\ldots,\psi_{l\alpha_l}$. Above and in the following, for a linear operator $T$,  $R(T)$ is the image of $T$. In Definition \ref{def:cs} and Lemmas \ref{lem:ch}-\ref{lem:sc} we always assume that $H$ has the properties indicated above.
\begin{defi}
	\label{def:cs}
	Consider $h:\Omega \to \C^{N \times N}$ smooth. We say that $h$ is stably compatible with $H$ if the following holds:
	\begin{enumerate}
		\item[(SC)]
	 For all $\bar \omega \in \Omega$,  there exists a neighbourhood $\mathcal V_{\bar \omega} \subset \Omega$ of $\bar \omega$ and for all $l=1,\ldots, \bar l$ a smooth right projector $J_{Rl}:\mathcal V_{\bar \omega} \to \C^{N \times \alpha_l}$ 
	 such that for all $\omega \in \mathcal V_{\bar \omega}$
	 \begin{equation}	 	
	 	\label{proj}
	 	\Rep (J_{Ll}(\omega)H(\omega)J_{Rl}(\omega)) = 0, \quad \Rep(J_{Ll}(\bar \omega)h(\bar \omega)J_{Rl}(\bar \omega)) <0.
	 \end{equation}
	\end{enumerate}
\end{defi}

Next, we state three lemmas corresponding to Definition \ref{def:cs}.  The first two show that the situation is simpler in two special cases  while the third one is crucial for formulating the dissipation conditions in terms of stable compatibility. As to not disrupt the main line of our argumentation both Lemmas will be proven in  Appendix A.
\begin{lemma}
	\label{lem:ch}
	Suppose the multiplicity of each eigenvalue of $H(\omega)$ does not depend on $\omega \in \Omega$. Then $h:\Omega \to \C^{N \times N}$  is stably compatible with $H$ if:
	\begin{enumerate}
		\item[(SC)$'$] For all $\bar \omega \in \Omega$, $l=1,\ldots,\bar l$, and each projector $J_{Rl}:\mathcal V_{\bar \omega} \to \C^{N \times \alpha_l}$ on $R(P_l(\omega))$ all eigenvalues of  the matrix $J_{Ll}(\bar \omega)h(\bar \omega)J_{Rl}(\bar \omega)$ have strictly negative real part.
	\end{enumerate}
\end{lemma}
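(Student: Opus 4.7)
The plan is to construct, for each fixed $\bar\omega \in \Omega$, a smooth basis of each eigenspace $R(P_l(\omega))$ on a neighbourhood $\mathcal V_{\bar\omega}$ of $\bar\omega$ and then check that the corresponding projectors satisfy (SC). The constant-multiplicity hypothesis is what enables this: by Kato's perturbation theory \cite{kat76}, the total projections $P_l(\omega)$ depend smoothly on $\omega$, and so, given any initial basis $\phi_{l1}^0,\ldots,\phi_{l\alpha_l}^0$ of $R(P_l(\bar\omega))$, the vectors $\phi_{lj}(\omega):=P_l(\omega)\phi_{lj}^0$ form a smooth family of bases of $R(P_l(\omega))$ after shrinking $\mathcal V_{\bar\omega}$ if necessary. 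This furnishes a smooth right projector $J_{Rl}$ with dual smooth left projector $J_{Ll}$.

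Next I would observe that the first identity in \eqref{proj} is automatic. Since $\mu_l(\omega)$ is semi-simple, $R(P_l(\omega))$ is the honest $\mu_l(\omega)$-eigenspace, so $H(\omega)J_{Rl}(\omega)=\mu_l(\omega)J_{Rl}(\omega)$ columnwise, whence
\[ J_{Ll}(\omega)H(\omega)J_{Rl}(\omega)=\mu_l(\omega)I_{\alpha_l}; \]
as $\mu_l(\omega)\in i\R$, its real part vanishes.

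The substance of the lemma is the second inequality in \eqref{proj}. Set $M:=J_{Ll}(\bar\omega)h(\bar\omega)J_{Rl}(\bar\omega)$. Assumption (SC)$'$ states that all eigenvalues of $M$ have strictly negative real part; this property is invariant under the choice of basis, since changing the basis replaces $M$ by a similar matrix. The eigenvalue condition alone does not imply $\Rep M<0$ as a Hermitian matrix, and it is here that I would invoke Lyapunov's theorem: there exists a positive definite Hermitian $P\in\C^{\alpha_l\times\alpha_l}$ with $M^*P+PM<0$. Replacing the initial basis at $\bar\omega$ by $\phi_l^0 P^{-1/2}$ and propagating via the smooth transport above changes $(J_{Rl},J_{Ll})$ to $(J_{Rl}P^{-1/2},\,P^{1/2}J_{Ll})$, so the new compressed matrix is $\tilde M=P^{1/2}MP^{-1/2}$. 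A direct computation yields
\[ P^{1/2}\bigl(\tilde M+\tilde M^*\bigr)P^{1/2}=PM+M^*P<0, \]
and since conjugation by the positive definite $P^{1/2}$ preserves the sign of a Hermitian matrix, $\Rep \tilde M<0$. The first identity remains valid because it holds for \emph{any} basis of $R(P_l(\omega))$.

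The main obstacle is precisely this passage from ``eigenvalues in the open left half-plane'' to ``Hermitian part negative definite''; it is resolved by Lyapunov's theorem together with the similarity freedom provided by a constant right-multiplication of the basis. Everything else --- smoothness of the spectral projections under constant multiplicity, basis-invariance of the eigenvalues, and the automatic vanishing of $\Rep(J_{Ll}HJ_{Rl})$ --- is routine.
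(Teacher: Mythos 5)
Your proof is correct and follows essentially the same route as the paper's: the first identity in \eqref{proj} holds for any projector by constant multiplicity, and the second is obtained by a constant change of basis at $\bar\omega$ that makes the Hermitian part of the compressed matrix negative definite. Your explicit Lyapunov argument is exactly the content of the paper's step ``there exists an invertible $T$ with $\Rep(T^{-1}h_l(\bar\omega)T)\le -cI_{\alpha_l}$'', so the two proofs coincide in substance.
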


\begin{lemma}
	\label{lem:sym}
	Consider smooth families of matrices $H:\Omega \to \C^{N \times N}$ and $h:\Omega \to \C^{N \times N}$, such that $H(\omega)$ is anti-Hermitian and for all $\bar \omega \in \Omega$, $l=1,\ldots,\bar l$ and for all unitary right projectors $J_{Rl}(\omega):\mathcal V_{\bar \omega} \to \C^{N \times \alpha_l}$, $J_{Ll}(\omega)=J_{Rl}(\omega)^*$, it holds
	$$J_{Ll}(\bar \omega)h(\bar \omega)J_{Rl}(\bar \omega)=(J_{Ll}(\bar \omega)h(\bar \omega)J_{Rl}(\bar \omega))^*.$$
	Then $h$ is stably compatible with $H$ if and only if (SC)$'$ holds.
\end{lemma}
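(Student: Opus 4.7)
The strategy is to exploit the anti-Hermiticity of $H(\omega)$ so that the negative-definiteness condition of (SC) reduces to a statement about the spectrum of a Hermitian matrix, at which point the equivalence with (SC)$'$ becomes the elementary fact that a Hermitian matrix is negative definite precisely when all its eigenvalues are strictly negative.

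First I would construct a smooth unitary right projector near any $\bar\omega \in \Omega$. Since $H(\omega)$ is anti-Hermitian it is normal, so $P_l(\omega)$ is an orthogonal projection of constant rank $\alpha_l$, depending smoothly on $\omega$ near $\bar\omega$ by Dunford's contour integral formula. Applying Gram--Schmidt orthonormalization to the smooth frame $\{P_l(\omega)\phi_{li}\}_{i=1}^{\alpha_l}$, for any basis $\{\phi_{li}\}$ of $R(P_l(\bar\omega))$, yields a smooth $J_{Rl}(\omega)$ with $J_{Rl}(\omega)^* J_{Rl}(\omega) = I_{\alpha_l}$, so that $J_{Ll}(\omega) := J_{Rl}(\omega)^*$ is a valid left projector. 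For this choice $J_{Ll}(\omega) H(\omega) J_{Rl}(\omega)$ inherits anti-Hermiticity from $H(\omega)$, so its real part vanishes identically on $\mathcal V_{\bar\omega}$; this settles the first half of \eqref{proj} at once.

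Next, I would observe that (SC)$'$ is intrinsic to the subspace $R(P_l(\bar\omega))$: any two right projectors differ by $J_{Rl}' = J_{Rl} T$ with $J_{Ll}' = T^{-1} J_{Ll}$, so the blocks $J_{Ll}'(\bar\omega) h(\bar\omega) J_{Rl}'(\bar\omega)$ and $J_{Ll}(\bar\omega) h(\bar\omega) J_{Rl}(\bar\omega)$ are similar and share the same spectrum. For the direction $(\mathrm{SC}) \Rightarrow (\mathrm{SC})'$ I would then apply the Rayleigh-quotient estimate $\Rep \lambda \le \lambda_{\max}(\Rep B) < 0$ to every eigenvalue $\lambda$ of $B := J_{Ll}(\bar\omega)h(\bar\omega)J_{Rl}(\bar\omega)$, and transfer the conclusion to arbitrary projectors via similarity invariance. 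Conversely, using the smooth unitary $J_{Rl}$ constructed above, the lemma's symmetry hypothesis forces $B$ to be Hermitian, so $B = \Rep(B)$; (SC)$'$ then gives that its eigenvalues, being automatically real, are strictly negative, whence $B$ is negative definite, which is exactly the second half of \eqref{proj}.

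The main technical subtlety is the smooth construction of unitary right projectors; this is standard for smooth orthogonal projections of constant rank via local Gram--Schmidt, but it is precisely the step where the anti-Hermiticity of $H$ plays the essential role, since for non-normal $H$ the total projection $P_l$ need not be orthogonal and the clean choice $J_{Ll} = J_{Rl}^*$ is no longer available.
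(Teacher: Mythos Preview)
Your proof is correct and follows essentially the same approach as the paper: exploit anti-Hermiticity of $H$ to make $J_{Ll}HJ_{Rl}$ anti-Hermitian for unitary projectors, then reduce the equivalence to the elementary fact that a Hermitian matrix is negative definite if and only if all its eigenvalues are negative. You supply more detail than the paper---in particular the smooth construction of unitary projectors via Gram--Schmidt and the explicit similarity-invariance argument for (SC)$'$---but these are precisely the steps the paper's two-line proof leaves implicit.
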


\begin{lemma}
	\label{lem:sc}
 Consider a smooth function $\mathcal H:[0,\infty) \times \Omega \to \C^{N \times N}$ and set $H=\mathcal H(0,\cdot))$, $h=(\partial_1 \mathcal H)(0,\cdot)$. Assume that that for each $\omega \in \Omega$ all eigenvalues $\mu_1(\omega),\ldots,\mu_{\bar l(\omega)}(\omega)$ of $H(\omega)$ are purely imaginary and semi-simple, $l=1,\ldots,\bar l$, and that $h$ is stably compatible with $H$. Then for each compact set $K \subset \Omega$ there exist $\kappa_0>0$ and a smooth family of matrices $\mathcal D:[0,\kappa_0] \times K \to \C^{n \times n}$  such that for some constants $C,c>0$ and all $(\kappa,\omega) \in [0,\kappa_0] \times K$
	\begin{align}
		\label{S1}
		\mathcal D(\kappa,\omega)=\mathcal D(\kappa,\omega)^*,& \quad C^{-1}I_N\le \mathcal D(\kappa,\omega) \le CI_N,\\
		\label{S2}
		\Rep (\mathcal D(\kappa,\omega)\mathcal H(\kappa,\omega)) &\le -c\kappa I_N.
	\end{align}
\end{lemma}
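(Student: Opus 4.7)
The plan is to construct $\mathcal D$ locally near each point of $K$ and then patch via a smooth partition of unity. First I fix $\bar\omega \in K$ and work on a neighborhood $\mathcal V_{\bar\omega}$ small enough that the smooth right projectors $J_{Rl}$ of (SC) are defined, the eigenvalue groups $\mu_l(\bar\omega)$ of $H$ remain well separated on $\mathcal V_{\bar\omega}$, and the negative-definiteness of $\Rep(J_{Ll}(\bar\omega) h(\bar\omega) J_{Rl}(\bar\omega))$ persists by continuity. Assembling the invertible matrix $J_R(\omega) := (J_{R1}(\omega), \ldots, J_{R\bar l}(\omega))$, I transform $\mathcal H$ to
\begin{equation*}
M(\kappa, \omega) := J_R(\omega)^{-1} \mathcal H(\kappa, \omega) J_R(\omega) = M_0(\omega) + \kappa M_1(\omega) + O(\kappa^2),
\end{equation*}
where $M_0 = \dia(H_1, \ldots, H_{\bar l})$ with $H_l = J_{Ll} H J_{Rl}$ anti-Hermitian by the first half of (SC), and $(M_1)_{kl} = J_{Lk} h J_{Rl}$. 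I seek a local symmetrizer of the form $D(\kappa, \omega) = J_R^{-*}(I + \kappa E(\omega)) J_R^{-1}$ with a Hermitian $E$ to be determined, for which
\begin{equation*}
D\mathcal H + \mathcal H^* D = \kappa J_R^{-*}\bigl(2\Rep M_1 + [E, M_0]\bigr) J_R^{-1} + O(\kappa^2).
\end{equation*}

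The key step is to pick $E$ blockwise so that the commutator cancels exactly the off-diagonal blocks of $2\Rep M_1$ while leaving its diagonal blocks untouched. Setting $E_{ll} = 0$ kills the diagonal contribution of $[E, M_0]$, so the surviving diagonal blocks are $2\Rep(J_{Ll} h J_{Rl})$, which is strictly negative-definite near $\bar\omega$. For $k \neq l$ I solve the Sylvester equation $H_k E_{kl} - E_{kl} H_l = (2\Rep M_1)_{kl}$: since $\mu_k(\bar\omega) \neq \mu_l(\bar\omega)$, shrinking $\mathcal V_{\bar\omega}$ keeps $\sigma(H_k(\omega))$ and $\sigma(H_l(\omega))$ disjoint, so the solution exists uniquely and depends smoothly on $\omega$. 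Taking adjoints of the Sylvester identity, together with $M_0^* = -M_0$ and Hermiticity of $\Rep M_1$, forces $E_{lk} = E_{kl}^*$, so $E$ is Hermitian overall. Choosing $\kappa_{\bar\omega}$ small enough that $I + \kappa E(\omega) \ge \tfrac12 I$ and that the $O(\kappa^2)$ remainder is dominated by the $O(\kappa)$ main term yields $D\mathcal H + \mathcal H^* D \le -c_{\bar\omega} \kappa I$ on $[0, \kappa_{\bar\omega}] \times \mathcal V_{\bar\omega}$, with a uniform two-sided bound for $D$ itself.

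To finish, I cover $K$ by finitely many such neighborhoods $\mathcal V_{\bar\omega_i}$ with local symmetrizers $D_i$, thresholds $\kappa_{\bar\omega_i}$, and constants $c_{\bar\omega_i}$. Picking a smooth partition of unity $\{\chi_i\}$ subordinate to the cover, I set $\kappa_0 := \min_i \kappa_{\bar\omega_i}$ and $\mathcal D := \sum_i \chi_i D_i$; Hermiticity, the two-sided bound \eqref{S1}, and the dissipation estimate \eqref{S2} all pass through the convex combination, with $c := \min_i c_{\bar\omega_i}$. The main obstacle I expect is the interplay between eigenvalue crossings of $H(\omega)$, which force the use of Kato's total projections rather than a single smooth diagonalization, and the off-diagonal $O(\kappa)$ coupling from $h$, which the naive choice $\tilde D = I$ cannot control; the Sylvester correction above is precisely what resolves this, and the spectral-gap condition between the $\mu_l(\bar\omega)$ is what makes it available.
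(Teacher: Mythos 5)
Your proof is correct, and its overall architecture --- localize near $(0,\bar\omega)$ using the projectors furnished by (SC), exploit the block-diagonal anti-Hermitian structure of $J_R^{-1}HJ_R$ at $\kappa=0$ together with the strict negativity of $\Rep(J_{Ll}hJ_{Rl})$ near $\bar\omega$, then patch over the compact set $K$ with a partition of unity --- coincides with the paper's. The genuine difference is how the first-order term in $\kappa$ is controlled. The paper invokes spectral separation (Kato's reduction process) to extend $T_0=J_R$ to a smooth $\kappa$-dependent family $T(\kappa,\omega)$ that block-diagonalizes $\mathcal H(\kappa,\omega)$ exactly for small $\kappa$; the off-diagonal coupling coming from $h$ then never appears, $\Rep\big(T^{-1}\mathcal HT\big)\le -c\kappa I$ follows block by block from Taylor expansion, and the symmetrizer is simply $\mathcal D=(T^{-1})^*T^{-1}$. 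You instead freeze the transformation at $\kappa=0$ and absorb the off-diagonal blocks of $2\Rep\big(J_R^{-1}hJ_R\big)$ into the symmetrizer itself through the Hermitian corrector $I+\kappa E$, with $E$ determined by the Sylvester equations $H_kE_{kl}-E_{kl}H_l=(2\Rep M_1)_{kl}$ --- solvable, unique and smooth precisely because $\mu_k(\bar\omega)\neq\mu_l(\bar\omega)$ keeps $\sigma(H_k(\omega))$ and $\sigma(H_l(\omega))$ disjoint after shrinking the neighbourhood, and Hermitian by the adjoint/uniqueness argument you give. This is the classical compensating-matrix device; it is more explicit, avoids appealing to the $\kappa$-dependent reduction process, and incidentally sidesteps the small verification that $(\partial_\kappa\mathcal H_l)(0,\bar\omega)=J_{Ll}(\bar\omega)h(\bar\omega)J_{Rl}(\bar\omega)$, which in the paper rests on $\mathcal H_l(0,\bar\omega)$ being a scalar matrix; the price is the extra Sylvester bookkeeping. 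Both routes produce uniform constants on each patch, and the final partition-of-unity step is identical to the paper's.
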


We are now in the situation to formulate the dissipation conditions.
\begin{itemize}
	\item[(D1)] For all $\xi \in \R^d\setminus\{0\}$ all solutions $\lambda(\xi) \in \C$ of the dispersion relation
	\begin{equation} 
	\label{disp}
	\lambda(\xi)+iA(\xi)-\mL=0.
	\end{equation}
	have strictly negative real-part.
		\item[(D2)] 
	 For all $\omega \in S^{d-1}$ each eigenvalue of $A_{11}(\omega)$ is real, semi-simple and the family 
	 $$(A_{12}(\omega)L^{-1}A_{21}(\omega))_{\omega \in S^{d-1}}$$ is stably compatible with $(-iA_{11}(\omega))_{\omega \in S^{d-1}}$
	\item[(D3)]
	 $\mathcal L$ is stable compatible with  $(-iA(\omega))_{\omega \in S^{d-1}}$.
\end{itemize}

Regarding (D1), (D2), (D3) we want to point 
out the following important simplification, which can be deduced from the proof of Proposition \ref{propD} below.

\begin{remark}
	If (D2) or (D3) hold, then (D1) is equivalent to the fact that for all $\xi \in \R^{d}\setminus\{0\}$ each solution $\lambda(\xi)$ to \eqref{disp} has non-vanishing real part.
\end{remark}

To show Proposition \ref{propD} we prove the following equivalent statement about the symbol  of \eqref{lin}.  $$\mM(\xi)=-iA(\xi)+\mathcal L.$$
Note that the equivalence follows from the Kreiss Matrix Theorem \cite{kre59}.
\begin{lemma}
	\label{lem:help}
	There exists a family of matrices $\R^d\ni \xi \mapsto \mathcal D(\xi) \in \C^{N \times N}$,  and constants $C,c>0$ such that for all $\xi \in \R^d$
	\begin{equation}
		\label{t1}
		\mathcal D(\xi)=\mathcal D(\xi)^*,~~C^{-1}I_n \le\mathcal D(\xi) \le CI_n \quad \Rep (\mathcal D(\xi)\mathcal M(\xi)) \le -c\rho(|\xi|)I_n.
	\end{equation}
\end{lemma}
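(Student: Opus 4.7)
The plan is to construct $\mD(\xi)$ piecewise in three overlapping frequency regimes---high, low, and intermediate---each controlled by one of the dissipation hypotheses (D1), (D2), (D3), and then glue the pieces by a smooth partition of unity in $|\xi|$. Because the target rate $\rho(|\xi|)=|\xi|^2/(1+|\xi|^2)$ is $O(|\xi|^2)$ near $0$ and $O(1)$ away from $0$, each regime only needs to match the appropriate order.

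First, in the high-frequency regime $|\xi|\ge R$, I would rescale via $\kappa=1/|\xi|$ and $\omega=\xi/|\xi|$ so that $\mM(\xi)=|\xi|\,\mathcal H(\kappa,\omega)$ with $\mathcal H(\kappa,\omega)=-iA(\omega)+\kappa\mL$. Hypothesis (H) gives that $-iA(\omega)$ has purely imaginary, semi-simple eigenvalues and (D3) that $\mL$ is stably compatible with it, so Lemma~\ref{lem:sc} (with $K=S^{d-1}$) delivers a smooth symmetrizer $\tilde\mD(\kappa,\omega)$ on $[0,\kappa_0]\times S^{d-1}$ satisfying \eqref{S1}--\eqref{S2}. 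Taking $R\ge 1/\kappa_0$ and $\mD_{\mathrm{hi}}(\xi):=\tilde\mD(1/|\xi|,\xi/|\xi|)$ I obtain $\Rep(\mD_{\mathrm{hi}}\mM)\le -c I\le -c\rho(|\xi|)I$ since $\rho\le 1$.

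Next, in the low-frequency regime $|\xi|\le\kappa_1$, I would exploit that $\mM(0)=\mL=\dia(0_m,L)$ has spectrum splitting, by (RH), into the semi-simple eigenvalue $0$ of multiplicity $m$ and the spectrum of $L$, which lies strictly in the open left half plane. The two perturbed spectral subspaces are $\mM(\xi)$-invariant and their total projections $P_0(\xi),P_L(\xi)$ together with smooth right/left projector pairs $J^0_{R/L}(\xi),J^L_{R/L}(\xi)$ are well defined on a neighbourhood of $0$. A Schur-complement computation in the block form
$$\mM(\xi)=\begin{pmatrix}-iA_{11}(\xi)&-iA_{12}(\xi)\\-iA_{21}(\xi)&-iA_{22}(\xi)+L\end{pmatrix}$$
identifies the slow reduced symbol as $\mathcal M^0(\xi)=J^0_L(\xi)\mM(\xi)J^0_R(\xi)=|\xi|\,\hat{\mathcal H}(|\xi|,\omega)$ with
$$\hat{\mathcal H}(\kappa,\omega)=-iA_{11}(\omega)+\kappa A_{12}(\omega)L^{-1}A_{21}(\omega)+O(\kappa^2),$$
so that (D2) supplies exactly the stable compatibility required by Lemma~\ref{lem:sc}: one obtains $\mD_0(\kappa,\omega)$ with $\Rep(\mD_0\hat{\mathcal H})\le -c\kappa I$, i.e.\ $\Rep(\mD_0\mathcal M^0)\le -c|\xi|^2 I$. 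The fast reduced symbol $\mathcal M^L(\xi)=L+O(|\xi|)$ has spectrum uniformly bounded away from the imaginary axis, and a standard Lyapunov construction yields $\mD_L(\xi)$ with $\Rep(\mD_L\mathcal M^L)\le -cI$. Since the two spectral subspaces are $\mM(\xi)$-invariant, I would assemble
$$\mD_{\mathrm{lo}}(\xi):=J^0_L(\xi)^*\mD_0(|\xi|,\omega)J^0_L(\xi)+J^L_L(\xi)^*\mD_L(\xi)J^L_L(\xi),$$
whose real part decouples into the two blocks and hence satisfies $\Rep(\mD_{\mathrm{lo}}\mM)\le -c|\xi|^2 I\le -c\rho(|\xi|)I$; positivity and boundedness follow from the uniform invertibility of $J_L=(J^0_L,J^L_L)^t$.

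Finally, on the compact annulus $\kappa_1/2\le|\xi|\le R$, condition (D1) forces every eigenvalue of $\mM(\xi)$ into the open left half plane, so the Lyapunov equation produces a local Hermitian positive $\mD_{\bar\xi}$ with $\Rep(\mD_{\bar\xi}\mM(\bar\xi))<0$ around each $\bar\xi$, and a finite open cover plus partition of unity assembles these into $\mD_{\mathrm{mid}}(\xi)$ with $\Rep(\mD_{\mathrm{mid}}\mM)\le -cI\le -c\rho(|\xi|)I$. A smooth partition $\chi_-,\chi_0,\chi_+$ of $[0,\infty)$ subordinate to the three overlapping intervals then yields $\mD(\xi):=\chi_-(|\xi|)\mD_{\mathrm{lo}}(\xi)+\chi_0(|\xi|)\mD_{\mathrm{mid}}(\xi)+\chi_+(|\xi|)\mD_{\mathrm{hi}}(\xi)$, and the conditions in \eqref{t1} are inherited piecewise. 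The hardest step will be the low-frequency construction: one must verify that the perturbed spectral projections are smooth and uniformly non-degenerate in $\xi$ near $0$, and correctly identify $A_{12}(\omega)L^{-1}A_{21}(\omega)$ as $(\partial_\kappa\hat{\mathcal H})(0,\omega)$, so that (D2) feeds into Lemma~\ref{lem:sc} with precisely the right structure.
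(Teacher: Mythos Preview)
Your proposal is correct and follows essentially the same three-regime strategy as the paper: Lemmas~\ref{lem:mw}, \ref{lem:sw}, \ref{lem:lw} treat intermediate, small, and large $|\xi|$ via (D1), (D2), (D3) respectively, with Lemma~\ref{lem:sc} driving the two extreme regimes, and a partition of unity in $|\xi|$ glues the pieces. The only cosmetic difference is in the low-frequency block diagonalization: the paper quotes an external result (Freist\"uhler's notes) to obtain the block form locally in $\omega$ and then patches over $S^{d-1}$, whereas you produce it globally via the spectral projections $P_0(\xi),P_L(\xi)$ of $\mM(\xi)$ near $\xi=0$ and the Schur reduction identifying $(\partial_\kappa\hat{\mathcal H})(0,\omega)=A_{12}(\omega)L^{-1}A_{21}(\omega)$---this is exactly the content of that cited result, so the arguments coincide.
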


For the proof closely follow \cite{FS} (see also \cite{sro23}). It is split into 3 parts.

\begin{lemma}
	\label{lem:mw}
	For all $r_1, r_2$ (D1) implies the assertion of Lemma \ref{lem:help} restricted to $\xi \in \R^d$ with $r_1 \le |\xi| \le r_2$ (with constants depending on $r_1,r_2$) .
\end{lemma}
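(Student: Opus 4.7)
The strategy is to reduce to the classical Lyapunov characterisation of Hurwitz matrices. On the compact annulus $\{r_1 \le |\xi| \le r_2\}$ the weight $\rho(|\xi|)=|\xi|^2/(1+|\xi|^2)$ is bounded above and below by positive constants, so it suffices to exhibit a continuous, Hermitian family $\mathcal D(\xi)$ uniformly bounded from above and below, with $\Rep(\mathcal D(\xi)\mathcal M(\xi))\le -c\, I_n$ for some $c>0$. Assumption (D1) says that every eigenvalue of $\mathcal M(\xi)=-iA(\xi)+\mathcal L$ has strictly negative real part; continuity of the roots of the characteristic polynomial in $\xi$ together with compactness of the annulus then produces a uniform spectral gap $\delta>0$ such that $\Rep\lambda\le -\delta$ for every eigenvalue $\lambda$ of $\mathcal M(\xi)$ and every $\xi$ with $r_1\le|\xi|\le r_2$.

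Given this gap, I would set
$$\mathcal D(\xi):=\int_0^\infty e^{t\mathcal M(\xi)^*}e^{t\mathcal M(\xi)}\,dt,$$
the standard solution of the Lyapunov equation. Differentiating $t\mapsto e^{t\mathcal M(\xi)^*}e^{t\mathcal M(\xi)}$ and noting that it vanishes as $t\to\infty$ yields
$$\mathcal D(\xi)\mathcal M(\xi)+\mathcal M(\xi)^*\mathcal D(\xi)=-I_n,$$
so $\Rep(\mathcal D(\xi)\mathcal M(\xi))=-\tfrac12 I_n$, which already delivers the desired negative definiteness with $c:=1/(2\sup_{r_1\le|\xi|\le r_2}\rho(|\xi|))$. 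Continuity of $\mathcal D$ and a uniform two-sided positive bound on it then follow from dominated convergence plus compactness, provided one has a uniform exponential decay $\|e^{t\mathcal M(\xi)}\|\le C_0 e^{-\delta t/2}$ on the annulus.

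The only real (and technically mild) obstacle is securing precisely this uniform decay, because (D1) does not assume semisimplicity of the eigenvalues of $\mathcal M(\xi)$, so the Jordan block structure may well jump with $\xi$. I would handle this by representing $e^{t\mathcal M(\xi)}$ via a Dunford--Cauchy integral along a fixed contour enclosing the spectrum of $\mathcal M(\xi)$ at distance $\delta/2$ from the imaginary axis: the resolvent $(z-\mathcal M(\xi))^{-1}$ is jointly continuous in $(z,\xi)$ on such a contour, hence uniformly bounded for $\xi$ in the annulus, which furnishes the required exponential decay with a constant independent of $\xi$. The whole argument works only because $\xi$ ranges over a bounded set away from $0$; the low- and high-frequency regimes, where the spectrum of $\mathcal M(\xi)$ touches the imaginary axis, require the sharper hypotheses (D2), (D3) and the stable compatibility machinery of Lemma~\ref{lem:sc}.
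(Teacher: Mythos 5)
Your proof is correct and follows essentially the same route as the paper, which invokes the principle of linearized stability (i.e.\ the Lyapunov characterisation of Hurwitz matrices) together with compactness of the annulus and refers to \cite{FS} for the details you have written out. The explicit Lyapunov integral, the uniform spectral gap from compactness, and the Dunford--Cauchy resolvent bound for uniform exponential decay are exactly the standard ingredients behind that one-line citation.
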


\begin{lemma}
	\label{lem:sw}
	There exists $r_0>0$ such that (D2) implies the assertion of Lemma \ref{lem:help} restricted to $\xi \in \R^d$ with $|\xi| \le r_0$ (with constants depending on $r_0$) .
\end{lemma}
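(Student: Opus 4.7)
The strategy is to exploit the block structure of $\mathcal{M}(\xi)=\mathcal{L}-iA(\xi)$ at small wave numbers; the low-frequency dissipation does not come from $\mathcal L$ directly but from the indirect interaction of $\mathcal L$ with $A(\omega)$, which is precisely what condition (D2) encodes. I parametrize $\xi=\kappa\omega$ with $\kappa=|\xi|$ small and $\omega\in S^{d-1}$. Since $L$ has spectrum in the open left half-plane by (RH), the eigenvalue $0$ of $\mathcal L=\operatorname{diag}(0_m, L)$ (of multiplicity $m$) is separated from $\sigma(L)$, so Kato's reduction to invariant subspaces yields $\kappa_0>0$ and a smooth family $T(\kappa,\omega)\in\mathrm{GL}_n(\C)$ on $[0,\kappa_0]\times S^{d-1}$ with $T(0,\omega)=I_n$ and
$$T(\kappa,\omega)^{-1}\mathcal M(\kappa\omega)T(\kappa,\omega)=\begin{pmatrix}\mathcal M_u(\kappa,\omega)&0\\0&\mathcal M_v(\kappa,\omega)\end{pmatrix},\quad \mathcal M_u(0,\cdot)=0,\ \mathcal M_v(0,\cdot)=L.$$
Uniformity in $\omega$ follows from compactness of $S^{d-1}$.

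For the slow block I set $\mathcal H(\kappa,\omega):=\kappa^{-1}\mathcal M_u(\kappa,\omega)$, smoothly extended to $\kappa=0$ by its Taylor coefficient. Substituting $T=I+\kappa T_1+\tfrac12\kappa^2 T_2+\cdots$ into $\mathcal M T=T\operatorname{diag}(\mathcal M_u,\mathcal M_v)$ and matching powers of $\kappa$, the order-$\kappa$ equations give $\mathcal H(0,\omega)=-iA_{11}(\omega)$ together with $T_1^{12}=-iA_{12}L^{-1}$ and $T_1^{21}=iL^{-1}A_{21}$ (with the diagonal blocks $T_1^{11},T_1^{22}$ at our disposal, say $=0$); the upper-left block at order $\kappa^2$ then yields $\partial_\kappa\mathcal H(0,\omega)=A_{12}(\omega)L^{-1}A_{21}(\omega)$. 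By (D2), $-iA_{11}(\omega)$ has purely imaginary, semi-simple spectrum and $A_{12}L^{-1}A_{21}$ is stably compatible with $-iA_{11}$, so Lemma \ref{lem:sc} applied with $\Omega=K=S^{d-1}$ furnishes, after possibly shrinking $\kappa_0$, a Hermitian family $\mathcal D_u(\kappa,\omega)$ with $C^{-1}I_m\le\mathcal D_u\le CI_m$ and $\Rep(\mathcal D_u\mathcal H)\le-c\kappa I_m$; hence $\Rep(\mathcal D_u\mathcal M_u)\le-c\kappa^2 I_m$. For the fast block, Lyapunov's theorem applied to $L$ (whose spectrum lies in $\{\Rep<0\}$) gives a constant Hermitian positive-definite $\mathcal D_v$ with $\Rep(\mathcal D_v L)\le-c'I_r$, which by continuity extends to $\Rep(\mathcal D_v\mathcal M_v)\le-cI_r$ on $[0,\kappa_0]\times S^{d-1}$ after a further shrinkage of $\kappa_0$.

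I then assemble $\mathcal D(\kappa\omega):=T(\kappa,\omega)^{-*}\operatorname{diag}(\mathcal D_u,\mathcal D_v)T(\kappa,\omega)^{-1}$; since $T,T^{-1}$ are uniformly bounded and invertible, $\mathcal D$ is Hermitian with $C^{-1}I_n\le\mathcal D\le CI_n$. Using $T^{-1}\mathcal M=\operatorname{diag}(\mathcal M_u,\mathcal M_v)T^{-1}$, one computes
$$\Rep(\mathcal D\mathcal M)=T^{-*}\operatorname{diag}\!\bigl(\Rep(\mathcal D_u\mathcal M_u),\Rep(\mathcal D_v\mathcal M_v)\bigr)T^{-1}\le-c\min(\kappa^2,1)\,T^{-*}T^{-1}\le-c'\rho(\kappa)I_n$$
for $|\xi|\le\kappa_0$, using $\min(\kappa^2,1)\ge\rho(\kappa)$ and the uniform lower bound $T^{-*}T^{-1}\ge c''I_n$. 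Taking $r_0=\kappa_0$ yields \eqref{t1} on $\{|\xi|\le r_0\}$. The main obstacle is the second-order Taylor calculation identifying $\partial_\kappa\mathcal H(0,\omega)=A_{12}L^{-1}A_{21}$, which is the single step tying the abstract compatibility hypothesis (D2) to effective low-frequency dissipation of $\mathcal M$; the remaining ingredients (Kato's block reduction, Lyapunov for the fast block, and the pullback of the symmetrizer through $T$) are routine.
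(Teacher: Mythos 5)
Your proposal is correct and follows essentially the same route as the paper: block-diagonalize $\mathcal M(\kappa\omega)$ via spectral separation of the zero group of $\mathcal L$ from $\sigma(L)$, identify the rescaled slow block as $-iA_{11}(\omega)+\kappa A_{12}(\omega)L^{-1}A_{21}(\omega)+O(\kappa^2)$, apply Lemma \ref{lem:sc} under (D2) to the slow block and a Lyapunov symmetrizer to the fast block, and pull the block-diagonal symmetrizer back through the transformation. The only difference is that you carry out explicitly the second-order Taylor computation giving $\partial_\kappa\mathcal H(0,\omega)=A_{12}L^{-1}A_{21}$, which the paper instead cites from Freist\"uhler's lecture notes; your verification of that identity is correct.
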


\begin{lemma}
	\label{lem:lw}
	There exists $r_\infty>0$ such that (D3) implies implies the assertion of Lemma \ref{lem:help}  restricted to $\xi \in \R^d$ with $|\xi| \ge r_\infty$ (with constants depending on $r_\infty)$.
\end{lemma}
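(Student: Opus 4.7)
The plan is to rescale the problem so that the large-frequency regime $|\xi| \to \infty$ becomes a small-parameter regime to which Lemma \ref{lem:sc} applies directly. Write $\xi = |\xi|\omega$ with $\omega = \xi/|\xi| \in S^{d-1}$ and set $\kappa = 1/|\xi|$. Since $\mathcal M(\xi) = -i|\xi|A(\omega) + \mathcal L$, we have the factorization
\begin{equation*}
\mathcal M(\xi) \;=\; |\xi|\,\bigl(-iA(\omega) + \kappa \mathcal L\bigr) \;=\; \kappa^{-1}\,\mathcal H(\kappa,\omega),
\end{equation*}
where $\mathcal H(\kappa,\omega) := -iA(\omega) + \kappa \mathcal L$ is smooth on $[0,\infty)\times S^{d-1}$. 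Thus $H(\omega) := \mathcal H(0,\omega) = -iA(\omega)$ and $h(\omega) := (\partial_1\mathcal H)(0,\omega) = \mathcal L$, precisely the data appearing in hypothesis (D3).

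Next I verify the hypotheses of Lemma \ref{lem:sc} with $\Omega = S^{d-1}$ (working componentwise if $d=1$) and $K = S^{d-1}$ (which is compact). By (H) together with Remark \ref{rem1}, for every $\omega \in S^{d-1}$ the eigenvalues of $A(\omega)$ are real and semi-simple, so the eigenvalues of $H(\omega) = -iA(\omega)$ are purely imaginary and semi-simple. Condition (D3) is exactly the stable compatibility of $h = \mathcal L$ with $H = -iA(\omega)$. Lemma \ref{lem:sc} therefore yields $\kappa_0 > 0$, constants $C_0,c_0 > 0$ and a smooth family $\mathcal D(\kappa,\omega)$ on $[0,\kappa_0]\times S^{d-1}$ satisfying
\begin{equation*}
\mathcal D(\kappa,\omega) = \mathcal D(\kappa,\omega)^*, \quad C_0^{-1} I_N \le \mathcal D(\kappa,\omega) \le C_0 I_N, \quad \Rep\bigl(\mathcal D(\kappa,\omega)\mathcal H(\kappa,\omega)\bigr) \le -c_0\kappa\, I_N.
\end{equation*}

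I then set $r_\infty := 1/\kappa_0$ and, for $|\xi| \ge r_\infty$, define the symmetrizer
\begin{equation*}
\mathcal D(\xi) \;:=\; \mathcal D\!\left(\tfrac{1}{|\xi|},\,\tfrac{\xi}{|\xi|}\right).
\end{equation*}
The self-adjointness and two-sided bounds $C_0^{-1} I_N \le \mathcal D(\xi) \le C_0 I_N$ transfer verbatim. For the dissipation estimate I multiply the inequality from Lemma \ref{lem:sc} by $|\xi| = \kappa^{-1}$ and use the factorization above to obtain
\begin{equation*}
\Rep\bigl(\mathcal D(\xi)\mathcal M(\xi)\bigr) \;=\; |\xi|\,\Rep\bigl(\mathcal D(\kappa,\omega)\mathcal H(\kappa,\omega)\bigr) \;\le\; -c_0\, I_N.
\end{equation*}
Since $\rho(|\xi|) = |\xi|^2/(1+|\xi|^2) \le 1$ for all $\xi$, the bound $-c_0 I_N \le -c_0\rho(|\xi|) I_N$ is automatic, yielding the desired estimate with $c = c_0$ on $\{|\xi| \ge r_\infty\}$.

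The argument is essentially a bookkeeping exercise once Lemma \ref{lem:sc} is in hand, and no real obstacle arises in this step; all the analytical substance has been concentrated in the construction of the perturbed symmetrizer via Lemma \ref{lem:sc}, whose hypotheses are guaranteed by (H) and (D3). The only minor care required is matching scalings: one must remember to pull out the factor $|\xi|$ from $\mathcal M(\xi)$ before invoking Lemma \ref{lem:sc}, and to confirm that $\rho(|\xi|)$ is bounded above by $1$ so that the rescaled estimate dominates the required right-hand side.
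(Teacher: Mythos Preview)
Your proof is correct and follows essentially the same approach as the paper: rescale via $\kappa=1/|\xi|$ to obtain $\mathcal H(\kappa,\omega)=-iA(\omega)+\kappa\mathcal L$, invoke Lemma~\ref{lem:sc} with $H=-iA(\omega)$, $h=\mathcal L$ on the compact $S^{d-1}$ (whose hypotheses are supplied by (H) and (D3)), and then undo the rescaling. You spell out more explicitly than the paper does both the verification that $-iA(\omega)$ has purely imaginary semi-simple spectrum and the final comparison $\rho(|\xi|)\le 1$, but the strategy is identical.
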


\begin{proof}[Proof of Lemma \ref{lem:mw}]
	This follows directly from the fact that the solutions of \eqref{disp} are exactly the eigenvalues of $\mM(\xi)$, the principle of linearized stability and compactness of $[r_1,r_2] \times S^{d-1}$, see e.g. \cite{FS} for more details.
\end{proof}

\begin{proof}[Proof of Lemma \ref{lem:sw}]
	We use polar coordinates $\R^d \ni \xi=\kappa\omega$, $\kappa=|\xi|, \omega \in S^{d-1}$. First, fix $\bar \omega \in S^{d-1}$ and consider a neighbourhood
	$$P_{\delta}(\bar \omega):=[0,\delta] \times B(\bar \omega,\delta)$$
	of $(0,\bar \omega) \in [0,\infty) \times S^{d-1}$ for some $\delta>0$. 
	In unpublished lecture notes \cite{fre23} Freistühler  proved that there exists $\delta,C>0$ and a familiy of uniformly bounded basis transformations $\mathcal R:P_\delta \to \C^{N \times N}$,  such that for all $(\kappa,\omega) \in P_{\delta}$
	\begin{align*}\mathcal R(\kappa,\omega)\mM(\kappa,\omega)\mathcal R(\kappa,\omega)^{-1}&=\begin{pmatrix}
			\kappa M_1(\kappa,\omega) & 0\\
			0 & M_2(\kappa,\omega)
		\end{pmatrix}
	\end{align*}
	for matrices $M_1(\kappa,\omega) \in C^{m \times m}$, $M_2(\kappa,\omega) \in \C^{r \times r}$ with
	$$M_1(0,\omega)=-iA_{11}(\omega),~~(\partial_\kappa M_1)(0,\omega)=A_{12}(\omega)L^{-1}A_{21}(\omega),~~M_2(0,\omega)=L.$$
	By condition (D2) and Lemma \ref{lem:sc}, after possibly choosing a smaller $\delta>0$, there exists a smooth matrix family $\mD_1:P_\delta \to \C^{m \times m}$,  satisfying \eqref{S1}, \eqref{S2} with  $\mathcal H$ replaced by $M_1$ and by (RH) there exists $\mD_2 \in \C^{r \times r}$, $c_2>0$ such that $\mD_2$ satisfies \eqref{S1} and $\mD_2L_2<-c_2I_r$. This implies that for $\tilde \mD=\operatorname{diag}(\mD_1,\mD_2)$  restricted to $\xi=r\omega$, $(r,\omega) \in P_{\delta/2}$ Lemma \ref{lem:help} is satisfied with
	$$\mD(r\omega)=R^*(r,\omega)\tilde \mD(r,\omega) R(r,\omega).$$
	We can construct a matrix family $\mD$ satisfying Lemma \ref{t1} restricted to all $|\xi| \le r_0$, $r_0$ sufficiently small, by partitioning $\Omega=S^{d-1}$ as in the proof of Lemma \ref{lem:sc}.
\end{proof}

\begin{proof}[Proof of Lemma \ref{lem:lw}]
	For $\nu>0$, $\omega \in S^{d-1}$ consider the matrix family
	$$\mN(\nu,\omega)=\nu \mM(1/\nu\omega)=-iA(\omega)+\nu L$$
	By (D3) and Lemma \ref{lem:sc} we find a smooth matrix family $\tilde \mD:[0,r_3] \times S^{d-1} \to \C^{n \times n}$ satisfying \eqref{S1}, \eqref{S2} for $M=\mN$.
	By definition this implies the assertion with $\mD(\kappa\omega)=\tilde \mD(1/\kappa,\omega)$ and $r_\infty=1/r_3$.
\end{proof}

Using a partition of unity on $\R^d$ with respect to the three regions $|\xi|  \le r_0$, $|\xi| \in [r_0,r_\infty]$, $|\xi| \ge r_\infty$ similar to the one in Lemma \ref{lem:sc} we can then easily construct a matrix family $\mD(\xi)$ such that Lemma \ref{lem:help} is satisfied for all $\xi \in \R^d$. 
	 
It remains to show Proposition \ref{prop:hc2} 

\begin{proof}[Proof of Proposition \ref{prop:hc2}]
	We have seen in the proofs of Lemmas \ref{lem:sw} and \ref{lem:lw} that the eigenvalues $\lambda(\xi)$ of $\mathcal M(\xi)$ for $|\xi| \to 0$ admit the expansion $\omega=\xi/|\xi|$
	$$\lambda(\xi)=-i|\xi|\mu_l(\omega)+|\xi|^2\zeta_l(\omega)+o(|\xi|^2),~\text{or } \lambda(\xi)=\nu+o(|\xi|),$$
	whith $\mu_l(\omega)$ an eigenvalue of $A_{11}(\omega)$, $\zeta_l(\omega)$ an eigenvalue of  $$J_{lL}(\omega)A_{12}(\omega)L^{-1}A_{21}(\omega)J_{Rl}(\omega),$$
	 $\nu$ an eigenvalue of $L$, and for $|\xi| \to \infty$
	$$\lambda(\xi)=-i|\xi|\gamma_l(\omega)+\beta_l(\omega)+o(|\xi|^{-1})$$
	with $\gamma_l(\omega)$ an eigenvalue of $A(\omega)$, $\beta_l$ an eigenvalue of $J_{lL}(\omega)\mathcal L J_{Rl}(\omega)$.
	
	Estimate \eqref{od1} is clearly equivalent to
	$$|\exp(\mathcal M(\xi)t)| \le C\exp(-c\rho(|\xi|t)), \xi \in \R^d,~t \in [0,\infty).$$ 
	By the expansions of the eigenvalues this can only be true if the real parts of all $\zeta_l$, $\beta_l$ are strictly negative. If $A_{11}$ and $A$ are constantly hyperbolic these are exactly the conditions (D2) and (D3) by Lemma \ref{lem:ch}.
\end{proof}

\section{Global existence and asymptotic decay for the quasi-linear system}

In order to treat the non-linear parts we also have to assume that (D3) does not only hold for $U=0$ but also in a neighbourhood of $0$. 
\begin{enumerate}
	\item[(D3)$'$] There exists a $0$-neighbourhood $\mU_0 \subset \mU$, such that $\mathcal L(U)$ is stably compatible with $A(U,\omega)$ on $\mU_0 \times S^{d-1}$.
\end{enumerate} 

Of course, if $A(U,\omega)$ is constantly hyperbolic this already follows from (D3) by virtue of Lemma \ref{lem:ch}. In Analogy to the linear setting we also define
$$\mM(U,\xi)=-A(U,\xi)+\mL(U).$$

As (H) implies the local well-posed of the Cauchy problem \eqref{eq:haupt}, \eqref{eq:haupt2} (cf. e.g. \cite{T91}) it suffices to show that the local solution can be extended globally for small initial data. As in \cite{kawa83} this is the case if the following holds.

\begin{prop}
	\label{prop:main}
	Let $d \ge 3$, $s>d/2+2$. Then there exist constants $\mu>0$, $\delta=\delta(\mu)>0$, and $C=C(\mu,\delta)>0$ (all independent of $T$) such that the following holds:
	For all $U_0 \in H^{s} \cap L^1 $ with 
	$\|U_0\|_{s},\|U_0\|_{L^1} <\delta$ and all $U\in C^0([0,T],H^{s})$
	satisfying \eqref{eq:haupt}, \eqref{eq:haupt2} as well as
	\begin{equation} 
		\label{bound}
		\sup_{t \in [0,T]}\|U(t)\|_{s}^2+\int_0^T \|U(\tau)\|_{s}^2~d\tau  \le \mu
	\end{equation}
		 we have for all $t \in [0,T]$
	\begin{align}
		\label{eq:decay1}
		\|U(t)\|_{s-1} &\le C(1+t)^{-\frac{d}{4}}(\|U_0\|_{s-1}+\|U_0\|_{L^1})\\
		\label{eq:decay20}
		\|U(t)\|_{s}^2+\int_0^t \|U(\tau)\|_{s}^2 d\tau &\le C(\|U_0\|_{s}^2+\|U_0\|_{L^1}^2).
	\end{align}
\end{prop}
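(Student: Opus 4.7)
The strategy is to derive the two estimates \eqref{eq:decay1} and \eqref{eq:decay20} jointly by coupling a quasilinear energy estimate based on a state-dependent para-differential symmetrizer with the sharp linear decay of Proposition \ref{prop:sobd}. Throughout, I will repeatedly use the a priori smallness \eqref{bound} together with Moser-type commutator estimates, which are legal because of the assumption $s>d/2+2$ and the Sobolev embedding $H^{s-1}\hookrightarrow L^\infty$.

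\textbf{Step 1: a quasilinear symmetrizer with built-in dissipation.} Applying Lemma \ref{lem:help} in a $U$-dependent fashion, using (H), (RH), (D1), (D2) and crucially (D3)$'$, one constructs a smooth symbol $\mathcal D(U,\xi)\in\C^{n\times n}$, bounded together with all its derivatives on $\mU_0\times\R^d$, Hermitian and uniformly positive, such that
\begin{equation*}
\operatorname{Re}\bigl(\mathcal D(U,\xi)\mM(U,\xi)\bigr)\le -c\,\rho(|\xi|)\,I_n,\qquad (U,\xi)\in \mU_0\times\R^d.
\end{equation*}
The gluing across the three regions $|\xi|\le r_0$, $r_0\le|\xi|\le r_\infty$, $|\xi|\ge r_\infty$ is performed exactly as in Section~3 but with $U$ varying in a small neighbourhood of $0$.

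\textbf{Step 2: high-order energy inequality.} Writing \eqref{eq:haupt} in its para-linearized form
\begin{equation*}
U_t+\operatorname{Op}\!\bigl(iA(U,\xi)\bigr)U-\mL(U)U=R(U,\nabla U),
\end{equation*}
where $R$ is a quadratic-or-higher para-differential remainder controlled by $\|U\|_{L^\infty}\|U\|_s$ thanks to Bony's linearization theorem, one then tests the equation against $\operatorname{Op}\!\bigl(\mathcal D(U,\xi)\bigr)\Lambda^{2s}U$. Using G\aa{}rding's inequality on the symbol of $\mathcal D\,\mM$, symbolic calculus to handle the commutator $[\operatorname{Op}(\mathcal D(U,\cdot)),\Lambda^s]$, and standard Moser-Kato-Ponce estimates to handle the $U$-dependence (these require $s>d/2+2$ so that $\nabla U\in L^\infty$), one arrives at an energy identity of the form
\begin{equation*}
\frac{d}{dt}E_s(t)^2+c\bigl\|\sqrt{\rho(D)}\,\Lambda^s U\bigr\|^2\le C\bigl(\|U\|_{L^\infty}+\|\nabla U\|_{L^\infty}\bigr)\|U\|_s^2,
\end{equation*}
with $E_s(t)^2$ equivalent to $\|U(t)\|_s^2$. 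The hypothesis \eqref{bound} for $\mu$ small enough lets the right-hand side be absorbed into the dissipative term after time integration, producing \eqref{eq:decay20}.

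\textbf{Step 3: decay via Duhamel and a bootstrap.} To derive \eqref{eq:decay1}, split \eqref{eq:haupt} into its linearisation at $0$ plus a quadratic nonlinearity $G(U,\nabla U)$, and use the variation-of-constants formula together with the linear $L^1\cap H^{s-1}\to H^{s-1}$ decay of Proposition \ref{prop:sobd}:
\begin{equation*}
\|U(t)\|_{s-1}\le C(1+t)^{-d/4}\bigl(\|U_0\|_{s-1}+\|U_0\|_{L^1}\bigr)+C\int_0^t(1+t-\tau)^{-d/4}\,\|G(\tau)\|_{H^{s-1}\cap L^1}\,d\tau.
\end{equation*}
Estimating $G$ in $H^{s-1}\cap L^1$ by $\|U\|_{s-1}\|U\|_s$ (product and Cauchy-Schwarz), introducing the weighted norm $M(t):=\sup_{0\le\tau\le t}(1+\tau)^{d/4}\|U(\tau)\|_{s-1}$ and inserting the energy bound from Step 2, one obtains an inequality of the type
\begin{equation*}
M(t)\le C_0\bigl(\|U_0\|_{s-1}+\|U_0\|_{L^1}\bigr)+C_1\,M(t)\,\bigl(\|U_0\|_s+\|U_0\|_{L^1}\bigr).
\end{equation*}
The hypothesis $d\ge 3$ is exactly what guarantees the convolution integral $\int_0^t(1+t-\tau)^{-d/4}(1+\tau)^{-d/2}\,d\tau\lesssim(1+t)^{-d/4}$; for $\delta$ small enough relative to $C_0,C_1$ the bootstrap closes and \eqref{eq:decay1} follows.

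\textbf{Main obstacle.} The principal technical difficulty is Step~2: the symbol $\mathcal D(U,\xi)$ has nontrivial dependence on both $U$ and $\xi$, and its three-region construction must be compatible with a calculus that produces only controllable commutator remainders. The para-differential machinery introduced in \cite{FS,sro23} is precisely what makes this work; the subtlety is that at low frequencies the dissipation is only of order $|\xi|^2$, so the remainders created by quantizing $\mathcal D(U,\cdot)$ and by paralinearizing $f^j(U)_{x_j}$ must all be shown to be of strictly lower order than $\sqrt{\rho(D)}\Lambda^s$, which is the delicate accounting that dictates the threshold $s>d/2+2$.
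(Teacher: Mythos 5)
Your overall architecture (a para-differential energy estimate built on a state-dependent symmetrizer, plus a Duhamel/linear-decay bootstrap, coupled through the square-integrability of $(1+t)^{-d/4}$ for $d\ge 3$) is the same as the paper's, but there are two genuine gaps in how you set it up. First, your Step 1 constructs a $U$-dependent symmetrizer $\mathcal D(U,\xi)$ on \emph{all} of $\mU_0\times\R^d$ by redoing the three-region gluing of Section 3 "with $U$ varying". For the low-frequency region this requires a $U$-dependent version of (D2), i.e.\ stable compatibility of $A_{12}(U,\omega)L(U)^{-1}A_{21}(U,\omega)$ with $-iA_{11}(U,\omega)$ for all $U$ near $0$ — and this does \emph{not} follow from (D2) at $U=0$, because stable compatibility is not an open condition when eigenvalue multiplicities of $A_{11}(U,\omega)$ change with $U$ (this is exactly why the paper must postulate (D3)$'$ separately rather than deduce it from (D3), cf.\ the remark following (D3)$'$ and Lemma \ref{lem:ch}). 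Theorem \ref{main} only assumes (D1), (D2) at the origin and (D3)$'$, so your construction uses hypotheses that are not available. The paper avoids this by making only the \emph{high-frequency} symmetrizer $U$-dependent (Proposition \ref{prop:nl}, which is where (D3)$'$ enters) and by accepting, in the central estimate of Proposition \ref{prop:central}, a remainder $\|W(\tau)\|_{-1}^2\sim\|U(\tau)\|_{s-1}^2$ that absorbs everything the low and intermediate frequencies fail to dissipate.

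This feeds into the second gap: the dissipative term $\|\sqrt{\rho(D)}\Lambda^sU\|^2=\|U\|_s^2-\|U\|_{s-1}^2$ degenerates at $\xi=0$, so absorbing the cubic right-hand side into it does \emph{not} produce \eqref{eq:decay20} as you claim at the end of Step 2; it produces the weaker inequality \eqref{eq:wichtig}, with $\int_0^t\|U(\tau)\|_{s-1}^2\,d\tau$ left over on the right. That term can only be closed by first establishing \eqref{eq:decay1} and then using $d\ge3$ to integrate $(1+\tau)^{-d/2}$. Consequently the logical order matters: the decay estimate must be proven \emph{before} and \emph{independently of} the energy estimate, using only the a priori bound \eqref{bound} — concretely, bound $\|G(\tau)\|_{H^{s-1}\cap L^1}\le C\|U(\tau)\|_{s-1}\|U(\tau)\|_s$, pull out $M(t)(1+\tau)^{-d/4}$, and apply Cauchy--Schwarz in $\tau$ against $\bigl(\int_0^t\|U\|_s^2\,d\tau\bigr)^{1/2}\le\mu^{1/2}$ so that the bootstrap closes with the small factor $\mu^{1/2}$ rather than with "the energy bound from Step 2". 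As written, your Step 2 needs Step 3 and your Step 3 invokes Step 2, which is circular. Both gaps are repairable, but the repair is precisely the structural choice the paper makes: $U$-dependence of the symmetrizer only at high frequencies, low frequencies relegated to $\|U\|_{s-1}$, and the decay estimate derived first from \eqref{bound} alone.
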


As $t \mapsto (t+1)^{-\frac{d}4}$ is square integrable over $\R_+$ for $d \ge 3$, the proof can be split into the following two propositions.

\begin{prop}
	\label{prop:decay}
In the situation of Proposition \ref{prop:main} there exist $\mu>0$, $\delta>0$ and $C>0$ such that the following holds:
For all $U_0 \in H^{s}\cap L^1 $ with $\|U_0\|_{s}$, $\|U_0\|_{L^1} <\delta$ and all $U\in C^0([0,T],H^{s})$ 
satisfying \eqref{eq:haupt}, \eqref{eq:haupt2} and 
$$\sup_{t \in [0,T]}\|U(t)\|_{s}^2+\int_0^T \|U(\tau)\|_{s}^2~d\tau  \le \mu$$ 
\eqref{eq:decay1} holds for all $t \in [0,T]$.
\end{prop}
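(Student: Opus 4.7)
My strategy is a time-weighted Duhamel argument, combining the linear decay of Proposition \ref{prop:sobd} with standard product estimates on the quadratic nonlinearity. Rewriting the equation (which we may assume is in normal form) as
\begin{equation*}
U_t+\sum_{j=1}^d A^j(0)U_{x_j}-\mL(0)U=G(U,\nabla U),\qquad G(U,\nabla U):=-\sum_{j=1}^d (A^j(U)-A^j(0))U_{x_j}+(\mL(U)-\mL(0))U,
\end{equation*}
smoothness of $A^j$ and $\mL$ gives the pointwise bound $|G|\lesssim |U|^2+|U|\,|\nabla U|$ for $U$ in a bounded neighborhood of $0$. Denoting by $\mathrm{e}^{tL_0}$ the semigroup of the linearized system \eqref{lin}, Duhamel's formula combined with Proposition \ref{prop:sobd} at Sobolev level $s-1$ yields
\begin{equation*}
\|U(t)\|_{s-1}\le C\mathrm{e}^{-ct}\|U_0\|_{s-1}+C(1+t)^{-d/4}\|U_0\|_{L^1}+C\int_0^t\bigl[\mathrm{e}^{-c(t-\tau)}\|G(\tau)\|_{s-1}+(1+t-\tau)^{-d/4}\|G(\tau)\|_{L^1}\bigr]d\tau.
\end{equation*}

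The two key nonlinear estimates I would establish are
\begin{equation*}
\|G(\tau)\|_{L^1}\le C\|U(\tau)\|_{s-1}^2,\qquad \|G(\tau)\|_{s-1}\le C\|U(\tau)\|_{s-1}\|U(\tau)\|_s.
\end{equation*}
The $L^1$ bound follows from Cauchy--Schwarz since $s-1\ge 1$, while the $H^{s-1}$ bound follows from Moser--Kato--Ponce-type commutator estimates, using $s-1>d/2+1$ so that $H^{s-1}$ is an algebra controlling $\|U\|_{L^\infty}$ and $\|\nabla U\|_{L^\infty}$.

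Introducing the time-weighted norm $N(t):=\sup_{0\le\tau\le t}(1+\tau)^{d/4}\|U(\tau)\|_{s-1}$ so that $\|U(\tau)\|_{s-1}\le N(t)(1+\tau)^{-d/4}$, the $L^1$-kernel integral is dominated by the standard convolution estimate
\begin{equation*}
\int_0^t (1+t-\tau)^{-d/4}(1+\tau)^{-d/2}\,d\tau\le C(1+t)^{-d/4},
\end{equation*}
which is valid precisely because $d\ge 3$ gives $d/2>1$ (integrability at infinity), the case $d=3$ being handled by splitting at $\tau=t/2$. For the exponentially-weighted $H^{s-1}$ integral, I factor $\|U\|_{s-1}\|U\|_s\le N(t)(1+\tau)^{-d/4}\|U\|_s$ and apply Cauchy--Schwarz in $\tau$ using the a priori bound \eqref{bound}:
\begin{equation*}
\int_0^t \mathrm{e}^{-c(t-\tau)}(1+\tau)^{-d/4}\|U(\tau)\|_s\,d\tau\le \mu^{1/2}\left(\int_0^t \mathrm{e}^{-2c(t-\tau)}(1+\tau)^{-d/2}\,d\tau\right)^{1/2}\le C\mu^{1/2}(1+t)^{-d/4},
\end{equation*}
where the last inequality is again obtained by splitting at $\tau=t/2$ and using $d/2>1$.

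Collecting everything and multiplying the Duhamel bound by $(1+t)^{d/4}$ produces the closing inequality
\begin{equation*}
N(t)\le C_0(\|U_0\|_{s-1}+\|U_0\|_{L^1})+C_1\mu^{1/2}N(t)+C_1 N(t)^2.
\end{equation*}
Choosing $\mu$ small enough that $C_1\mu^{1/2}<1/2$ absorbs the linear middle term into the left-hand side. A standard continuity bootstrap in $t$, using $N(0)\le C\delta$ and continuity of $N$, and taking $\delta$ small, yields $N(t)\le 4C_0(\|U_0\|_{s-1}+\|U_0\|_{L^1})$ for all $t\in[0,T]$, which is precisely \eqref{eq:decay1}. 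The most delicate point is the exponential-kernel convolution estimate: the factor $\|U\|_s$ in $\|G\|_{s-1}$ cannot be controlled by $N$, so it must be paired with the $L^2_t H^s_x$ a priori bound \eqref{bound}, and making the resulting square-root integral decay at the sharp rate $(1+t)^{-d/4}$ is exactly what forces the dimensional restriction $d\ge 3$.
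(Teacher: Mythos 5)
Your argument is correct and is essentially the paper's own proof: the paper disposes of Proposition \ref{prop:decay} in one line by invoking Proposition \ref{prop:sobd} together with ``Green's formula as in \cite{kawa83}'', which is precisely the Duhamel representation, time-weighted norm $N(t)$, and bootstrap that you carry out in detail. Your identification of the delicate point --- pairing the $\|U\|_s$ factor in $\|G\|_{s-1}$ with the $L^2_tH^s_x$ bound \eqref{bound} via Cauchy--Schwarz, which is where $d\ge 3$ enters --- matches the role this restriction plays in the paper.
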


\begin{prop}
\label{prop:energy}
In the situation of Proposition \ref{prop:main} there exist $\mu>0$,  and $C>0$ such that the following holds:
For all $U_0 \in H^{s} $ and all $ U\in C^0([0,T],H^{s})$ 
satisfying \eqref{eq:haupt}, \eqref{eq:haupt2} and 
$$\sup_{t \in [0,T]}\|U(t)\|_{s}^2+\int_0^T \|U(\tau)\|_{s+1}^2~d\tau  \le \mu$$ we have for all $t \in [0,T]$
\begin{equation}
	\label{eq:wichtig}
	\|U(t)\|_{s}^2+\int_0^t  \|U(\tau)\|_{s}^2 d\tau
	 \le C\|U_0\|_{{s}}^2+C\int_{0}^t \|U(\tau)\|_{s-1}^2~d\tau
\end{equation}
\end{prop}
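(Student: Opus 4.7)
The plan is to construct a paradifferential energy functional at the $H^s$ level, using a $U$-dependent variant of the symmetrizer $\mD(\xi)$ from Lemma \ref{lem:help}, and to read \eqref{eq:wichtig} off a differential inequality for this functional. I would first observe that the bootstrap assumption $\sup_{[0,T]}\|U\|_s^2 \le \mu$ together with the Sobolev embedding $H^s \hookrightarrow C^2$ (valid for $s>d/2+2$) pins $U(t,x)$ inside the fixed neighbourhood $\mU_0$ of $0$ furnished by (D3)$'$ once $\mu$ is small enough. Repeating the proofs of Lemmas \ref{lem:mw}, \ref{lem:sw}, \ref{lem:lw} with $U \in \mU_0$ as an additional smooth parameter, and using (H), (RH), (D1), (D2), (D3)$'$, produces a smooth family $\mD:\mU_0\times\R^d\to\C^{n\times n}$, bounded with all its derivatives, such that
\begin{equation*}
\mD(U,\xi)=\mD(U,\xi)^*,\quad C^{-1}I_n\le\mD(U,\xi)\le CI_n,\quad \Rep\bigl(\mD(U,\xi)\mM(U,\xi)\bigr)\le -c\rho(|\xi|)I_n.
\end{equation*}

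Next I would define the energy
\begin{equation*}
E(t):=\Rep\bigl(\Op(\mD(U(t),\cdot))\Lambda^s U(t),\,\Lambda^s U(t)\bigr)_{L^2}.
\end{equation*}
The uniform positivity of $\mD$ together with Bony's G\aa rding inequality yields $c_0\|U\|_s^2\le E(t)\le C_0\|U\|_s^2$, for $\mu$ small. Rewriting \eqref{eq:haupt} in paradifferential form $U_t=\Op(\mM(U,\cdot))U + \mathcal N(U,U)$, where $\mathcal N(U,U)$ collects quadratic paralinearization remainders, and differentiating $E$ in time gives
\begin{equation*}
\dot E(t)=2\Rep\bigl(\Op(\mD)\Op(\mM)\Lambda^s U,\Lambda^s U\bigr)+\mathcal R(t),
\end{equation*}
where $\mathcal R(t)$ collects the contribution from $\partial_U\mD\cdot U_t$, the commutator $[\Lambda^s,\Op(\mM(U,\cdot))]$, and the paralinearization remainders. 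Symbolic calculus replaces $\Op(\mD)\Op(\mM)$ by $\Op(\mD\mM)$ up to an operator of order $0$, and G\aa rding's inequality applied to the principal symbol gives
\begin{equation*}
2\Rep\bigl(\Op(\mD\mM)\Lambda^s U,\Lambda^s U\bigr)\le -c\bigl(\Op(\rho(|\xi|))\Lambda^s U,\Lambda^s U\bigr)+C\|U\|_{s-1}^2.
\end{equation*}
Since $\rho(|\xi|)\lxi^{2s}\ge \lxi^{2s}-\lxi^{2s-2}$, the dissipative contribution dominates $c\|U\|_s^2 - C\|U\|_{s-1}^2$.

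The remainder $\mathcal R(t)$ is controlled by Moser/Kato--Ponce type commutator estimates, using $s>d/2+2$, which yield $|\mathcal R(t)|\le C\|U\|_s\cdot\|U\|_s^2\le C\mu^{1/2}\|U\|_s^2$; choosing $\mu$ small enough absorbs this into the G\aa rding dissipation. The resulting pointwise inequality
\begin{equation*}
\dot E(t)+\tfrac{c}{2}\|U(t)\|_s^2\le C\|U(t)\|_{s-1}^2
\end{equation*}
then integrates to \eqref{eq:wichtig} via $E(t)\sim\|U(t)\|_s^2$. The main obstacle is the quasilinear step: one must quantize $\mM(U,\cdot)$ and $\mD(U,\cdot)$ in a class of paradifferential symbols with limited regularity in $U$ (only $U\in H^s$) and verify that the paralinearization remainders for $\sum_j A^j(U)U_{x_j}$ and $\mL(U)U$ are genuinely subprincipal. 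This is precisely where the para-differential framework recently developed in \cite{FS,sro23} enters; it provides the symbolic calculus $\Op(\mD)\Op(\mM)=\Op(\mD\mM)+\text{order }0$ and the G\aa rding inequality for matrix-valued, $U$-dependent symbols on which the whole argument rests.
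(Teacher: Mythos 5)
Your overall strategy---a paradifferential energy $E(t)=\Rep(\Op(\mD_U)\Lambda^sU,\Lambda^sU)$ built from a $U$-dependent symbolic symmetrizer, symbolic calculus plus a strong G\aa rding inequality, and absorption of the commutator/paralinearization remainders for small $\mu$---is exactly the route the paper takes (via Lemma \ref{lem:c} and Proposition \ref{prop:central}, imported from \cite{sro23}). However, there is one concrete gap: you construct $\mD(U,\xi)$ on \emph{all} of $\mU_0\times\R^d$ with $\Rep(\mD(U,\xi)\mM(U,\xi))\le -c\rho(|\xi|)I_n$ by ``repeating the proofs of Lemmas \ref{lem:mw}, \ref{lem:sw}, \ref{lem:lw} with $U$ as an additional smooth parameter, using (D1), (D2), (D3)$'$''. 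But (D1) and (D2) are hypotheses at $U=0$ only; only (D3) is assumed in the strengthened form (D3)$'$ valid in a $U$-neighbourhood. The small-frequency construction of Lemma \ref{lem:sw} in particular requires the eigenvalues of $A_{11}(U,\omega)$ to be real and semi-simple and the perturbed blocks to be stably compatible; semi-simplicity is not an open condition in $U$ (a semi-simple multiple eigenvalue at $U=0$ can degenerate to a Jordan block for arbitrarily small $U\ne 0$), so this step is not justified by the stated assumptions and the full-range symmetrizer need not exist.

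The repair is to notice that the full-range symmetrizer is not needed for \eqref{eq:wichtig}: on any bounded frequency region $|\xi|\le r_\infty$ one has $\lxi^{2s}\le(1+r_\infty^2)\lxi^{2s-2}$, so the entire low- and mid-frequency contribution to $\dot E$ can be dumped into the term $C\|U\|_{s-1}^2$ that is already allowed on the right-hand side of \eqref{eq:wichtig} (and is later controlled by the linear decay in Proposition \ref{prop:decay}). This is precisely what the paper does: the $U$-dependent symmetrizer is constructed only for $|\xi|\ge\bar r^{-1}$ using (D3)$'$ (Proposition \ref{prop:nl}), and the remaining frequencies enter through the $\|W\|_{-1}^2\le\|U\|_{s-1}^2$ term in estimate \eqref{central}. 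If you restrict your construction of $\mD(U,\xi)$ to large $|\xi|$ and replace the G\aa rding step at low frequencies by this trivial bound, your argument goes through; the remaining ingredients (the order-$0$ composition remainder carrying a factor $\|\nabla U\|_{L^\infty}\lesssim\mu^{1/2}$, the Moser-type control of $\mathcal R(t)$, and the final integration) are handled correctly, modulo the standard mollification $W=\Lambda^sJ_\eps U$ needed to make the time differentiation of $E$ rigorous for $U$ merely in $C^0([0,T],H^s)$.
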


Proposition \ref{prop:decay} follows from Proposition \ref{prop:sobd} and Green's formula as in \cite{kawa83}.

For the proof of Proposition \ref{prop:energy} we closely follow \cite{sro23}. We first need to show:

\begin{prop}
	\label{prop:nl}
	Assume (D3)$'$. Then there exist $\bar r>0, \bar c>0$ and  
	a  mapping $\mD_\infty\in C^\infty(\Omega_\infty,\C^{n \times n})$, $\Omega_\infty:=\bar \mU_0 \times \{\xi \in \R^d:|\xi| \ge \bar r^{-1}\}$, $\mU_0:= \{U \in \mU: |U| < \bar r\}\subset \mU$, such that:
	\begin{enumerate} 
		\vspace{-0.5\baselineskip}
		\item[(i)] For all $(U,\xi) \in \Omega_{\infty}$ $$\mD_\infty(U,\xi)=\mD_\infty(U,\xi)^* \ge \bar c I_n,$$
		and  
		$$\Rep \mD_\infty(U,\xi)\mM(U,\xi) \le -\bar c I_{n}.$$
		\item[(ii)] For any $\alpha,\beta \in \N_0^d$ there exist $C_{\alpha\beta}>0$ with
		\begin{equation}
			\label{sym1}|\partial_U^\beta \partial_\xi^\alpha \mD_\infty (U,\xi)| \le C_{\alpha\beta}\lxi^{-|\alpha|},~~(U,\xi) \in  \Omega_\infty.
		\end{equation}
	\end{enumerate}
\end{prop}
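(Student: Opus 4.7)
The strategy is to reduce the high-frequency regime to the situation treated by Lemma \ref{lem:sc} via the rescaling $\nu=1/|\xi|$, now carrying the $U$-dependence from (D3)$'$ as an additional parameter. Introduce
\[
\mN(\nu,U,\omega):=-iA(U,\omega)+\nu\mL(U),
\]
which is smooth on $[0,\infty)\times\tilde\mU_0\times S^{d-1}$, where $\tilde\mU_0\subset\mU$ is the open $0$-neighbourhood provided by (D3)$'$. One has the identity $\nu\mM(U,\omega/\nu)=\mN(\nu,U,\omega)$ for $\nu>0$; at $\nu=0$ the matrix $-iA(U,\omega)$ has purely imaginary semi-simple spectrum by (H), while $(\partial_\nu \mN)(0,U,\omega)=\mL(U)$ is, after reading (D3)$'$ with the same sign convention as (D3), stably compatible with $-iA(U,\omega)$ on $\tilde\mU_0\times S^{d-1}$.

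Pick any $r_1>0$ with $\{|U|\le r_1\}\subset\tilde\mU_0$ and apply Lemma \ref{lem:sc} to $\mathcal H=\mN$ on $[0,\infty)\times(\tilde\mU_0\times S^{d-1})$ with compact set $K=\{|U|\le r_1\}\times S^{d-1}$ (using $d\ge 3$ for connectedness of $S^{d-1}$ and hence of the parameter manifold). This yields $\nu_0>0$ and a smooth $\mD_0\in C^\infty([0,\nu_0]\times K,\C^{n\times n})$ satisfying
\[
\mD_0=\mD_0^*,\qquad C^{-1}I_n\le \mD_0\le CI_n,\qquad \Rep(\mD_0\mN)\le -c\nu\,I_n.
\]
Set $\bar r:=\min(r_1,\nu_0)$ and define
\[
\mD_\infty(U,\xi):=\mD_0(1/|\xi|,\,U,\,\xi/|\xi|),\qquad (U,\xi)\in\bar\mU_0\times\{|\xi|\ge\bar r^{-1}\};
\]
the condition $|\xi|\ge\bar r^{-1}\ge\nu_0^{-1}$ ensures $1/|\xi|\in[0,\nu_0]$, and $\bar\mU_0\subset\{|U|\le r_1\}$, so $\mD_\infty$ is well-defined.

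Symmetry and positivity in (i) transfer directly from the corresponding properties of $\mD_0$. The dissipation estimate follows from the identity $\mM(U,\xi)=|\xi|\,\mN(1/|\xi|,U,\xi/|\xi|)$, which gives
\[
\Rep(\mD_\infty(U,\xi)\mM(U,\xi))=|\xi|\,\Rep(\mD_0\mN)(1/|\xi|,U,\xi/|\xi|)\le -c\,I_n,
\]
so one may take $\bar c=\min(C^{-1},c)$. For (ii), $\mD_\infty$ is smooth on its domain because $\xi\mapsto1/|\xi|$ and $\xi\mapsto\xi/|\xi|$ are smooth for $|\xi|\ge\bar r^{-1}>0$, and every $\xi$-derivative of these two maps contributes a factor $O(|\xi|^{-1})$. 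Since $\mD_0$ together with all its derivatives is bounded on the compact set $[0,\nu_0]\times K$, an induction on $|\alpha|$ via the chain rule yields $|\partial_U^\beta\partial_\xi^\alpha \mD_\infty(U,\xi)|\le C_{\alpha\beta}|\xi|^{-|\alpha|}$, equivalent to \eqref{sym1} up to a constant since $|\xi|\ge\bar r^{-1}$. The real content of the argument has already been absorbed by Lemma \ref{lem:sc}; the remaining task is simply to thread the $U$-parameter through that construction, and the main --- though routine --- point requiring care is the chain-rule bookkeeping needed for the symbol estimates under the rescaling.
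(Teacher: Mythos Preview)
Your proof is correct and follows essentially the same route as the paper: rescale via $\nu=1/|\xi|$, apply Lemma~\ref{lem:sc} to $\mN$ with $(U,\omega)$ as the parameter in $\Omega$, and pull back. You supply more detail than the paper on the symbol estimates (ii), which the paper leaves implicit; the only quibble is that connectedness of $S^{d-1}$ already holds for $d\ge 2$, not just $d\ge 3$ (and for $d=1$ one could treat the two points separately), but this is immaterial here.
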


\begin{proof}
	The proof is essentially the same as in \cite{sro23} and we just highlight the main steps here. 
	
	Consider $\mN:\mU \times (0,\infty) \times \mathbb S^{d-1} \to \C^{n \times n}$ defined by
	\begin{equation} \label{K}
		\mN(U,\nu,\omega)=\nu\mM(U,\nu^{-1}\omega)=-iA(U,\omega)+\nu \mL(U)
	\end{equation}
 	By (D3)$'$ and Lemma \ref{lem:sc} there exist $\nu_0>0, c,C>0$ and a family of Hermitian matrices $\tilde \mD:\bar \mU_0 \times [0,\nu_0] \times S^{d-1}  \to \C^{n \times n}$ such that 
 	\begin{align*}
 	C^{-1}I_n\le \mD(\nu,\omega,U) &\le CI_n,\\
 	\Rep (\mD(U,\nu,\omega)\mN(U,\nu,\omega)) &\le -cI_n
 \end{align*}	
	for all $(\nu,\omega,U) \in \mU_0 \times [0,\nu_0]\times S^{d-1}$. The assertion now follows in a straightforward manner for $\bar r=\nu_0^{-1}$ and 
	$$\mD_\infty(U,\xi)=\tilde \mD(U,|\xi|^{-1},\xi|\xi|^{-1}).$$
\end{proof}

Next we need to introduce para-differential operators. 

\begin{defi}
	For $k \in \N_0, l \in \R$, $\Gamma^l_k$ is
	the set of functions $A: \R^d \times \R^d \mapsto \C^{n\times n}$ such that,
	\begin{enumerate}
		\item[(i)]
		for almost all $x \in \R^d$ the mapping $\xi \mapsto A(x,\xi)$ is in $C^\infty(\R^d,\C^{n\times n})$
		\item[(ii)]
		
		\vspace{-0.5\baselineskip}
		for any $\alpha \in \N_0^d$ and $\xi \in \R^d$ the mapping $x  \mapsto \partial_\xi^\alpha A(x,\xi)$ belongs
		to $W^{k,\infty}(\R^d,\C^{n \times n})$ and there exists $C_\alpha > 0$ not depending on $\xi$ such that
		\begin{equation}
			\label{eq:semingamma}
			\|\partial^\alpha_\xi A(\cdot,\xi)\|_{L^{\infty}} \le C_\alpha \lxi^{l-|\alpha|}.
		\end{equation}
	\end{enumerate}
\end{defi}

\begin{defi}
	For $A \in \Gamma^l_k$ we denote by $\Op[A]:\mS(\R^d) \to \mS(\R^d)$, the para-differential operator with symbol $A$, which can be extended to a bounded linear operator from $H^{p+l}$ to  $H^p$ for all $p \in \R$. We also write $\Op[A]$ for this extension.
\end{defi}

We do not give the construction of a para-differential operator at this point and instead refer to the original works \cite{bon81}, \cite{mey81} and the very well-written textbook \cite{met08} for all the necessary definitions and results. We also need the following

\begin{defi}
	We denote by $S^l(\mU):=S^l(\mU,\C^{n \times n})$ the set of all functions $G \in C^\infty(\mU \times \R^d, \C^{n \times n})$ for which for any $\alpha,\beta \in \N_0^d$ there exists $C_{\alpha\beta}>0$ such that for all $(V,\xi) \in \mU \times \R^d$
	\begin{equation} 
		\label{eq:help*}
		|\partial_x^\beta\partial_\xi^\alpha G(V,\xi)| \le C_{\alpha\beta}\lxi^{l-|\alpha|}.
	\end{equation}
\end{defi}

For a function $G \in C^{\infty}(\mU \times \R^d,\C^{n \times n})$ and $V \in H^s$ with $V(\R^d)  \subset \mU$ we denote by $G_V$ the composition $G_V(x,\xi)=G(V(x),\xi)$. Then if $G \in S^l$ and $V \in H^s$ for $s>d/2+k$ we find $G_V \in \Gamma^l_k$ for $s>d/2+k$ (cf. ibid).

Take $s>d/2+2$ and $T>0$. Let $U \in C([0,T],H^s)$  be a solution to \eqref{eq:haupt} satisfying \eqref{bound} for some $\mu>0$. For a family of Friedrichs mollifiers   $(J_\eps)_{\eps \in (0,1)}$ we set $W:=W_\eps=\Lambda^sJ_\eps U$. We also fix $t \in [0,T]$ but suppress the dependence on this $t$ in our notation. In the following $C$ denotes a generic constant depending monotonically increasing on $\mu$.
\begin{lemma}
	\label{lem:c}
	$W$ satisfies the equation
	\begin{equation} 
		\label{eq:V}
		W_t=\Op[\mM_U]W+R_\eps(U),
	\end{equation}
	for $R_\eps(U) \in L^2$ with
	$$\|R_\eps(U)\| \le C_\mu(\|U\|_s^2+\|U\|_{s-1})$$
	for all $\eps \in (0,1)$.
\end{lemma}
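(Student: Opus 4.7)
The plan is to apply the Fourier multiplier $\Lambda^s J_\eps$ to the equation
\begin{equation*}
U_t = -\sum_{j=1}^d A^j(U)\partial_{x_j}U + \mL(U)U
\end{equation*}
and then reorganize the right-hand side, using Bony's para-differential calculus (see e.g.\ \cite{met08}), as $\Op[\mM_U]W$ plus a remainder $R_\eps(U)$. Since $\Lambda^s$ and $J_\eps$ commute with $\partial_t$, the left-hand side is exactly $W_t$, so the task is purely to analyze the right-hand side.

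First, Bony's paraproduct/paralinearization decomposition yields
\begin{equation*}
A^j(U)\partial_{x_j}U = T_{A^j(U)}\partial_{x_j}U + r_j(U), \qquad \mL(U)U = T_{\mL(U)}U + \tilde r(U),
\end{equation*}
where, thanks to $s > d/2 + 2$, the bilinear remainders satisfy $\|r_j(U)\|_s + \|\tilde r(U)\|_s \le C\|U\|_s^2$ (Bony's paralinearization gives regularity $H^{2s-d/2}$, well above $H^s$). Applying $\Lambda^s J_\eps$ then produces $L^2$-contributions $\le C\|U\|_s^2$. Next I would commute $\Lambda^s J_\eps$ past the para-differential pieces,
\begin{equation*}
\Lambda^s J_\eps T_{A^j(U)}\partial_{x_j}U = T_{A^j(U)}\partial_{x_j}W + [\Lambda^s J_\eps,\, T_{A^j(U)}]\partial_{x_j}U,
\end{equation*}
and similarly for $T_{\mL(U)}U$. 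Para-differential symbolic calculus identifies each commutator as an operator of order $s-1$ whose symbol-seminorm is controlled by $\|A^j(U)\|_{W^{1,\infty}}$ (resp.\ $\|\mL(U)\|_{W^{1,\infty}}$); using $H^{s-1}\hookrightarrow W^{1,\infty}$, its action on $\partial_{x_j}U \in H^{s-1}$ lies in $L^2$ with the quadratic bound $\le C\|U\|_s^2$, uniformly in $\eps$ (the constant-coefficient pieces $A^j(0), \mL(0)$ commuting exactly with the Fourier multiplier $\Lambda^s J_\eps$).

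Finally, by the very definition of the para-differential calculus and the choice $\mM_U(x,\xi) = -iA(U(x),\xi) + \mL(U(x))$, one has
\begin{equation*}
-\sum_{j=1}^d T_{A^j(U)}\partial_{x_j}W + T_{\mL(U)}W = \Op[\mM_U]W
\end{equation*}
up to a para-differential operator of order $\le -1$ acting on $W$; since $W = \Lambda\,(\Lambda^{s-1}J_\eps U)$ and $\|\Lambda^{s-1}J_\eps U\|_0 \le \|U\|_{s-1}$, any such lower-order contribution is bounded in $L^2$ by $C\|U\|_{s-1}$, which accounts for the linear term in the announced estimate. Assembling these pieces yields \eqref{eq:V} with the stated bound on $R_\eps(U)$. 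The main obstacle will be the careful order-bookkeeping in the para-differential calculus and in the paralinearization of compositions: one has to verify that every commutator and every paralinearization remainder really lands in $L^2$ with either the quadratic $\|U\|_s^2$ or the linear $\|U\|_{s-1}$ control, uniformly in $\eps$---once this accounting is in place, the assembly is largely algebraic and follows essentially the same pattern as in \cite{sro23}.
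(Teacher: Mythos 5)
Your proposal is correct and follows essentially the same route as the paper: apply $\Lambda^s J_\eps$ to the equation, replace multiplication by $A^j(U)$ and $\mL(U)$ with the corresponding para-differential operators (absorbing the paralinearization remainders, bounded by $C\|U\|_s^2+C\|U\|_{s-1}$, into $R_\eps$), and control the commutators $[\Lambda^s J_\eps,\Op[A^j_U]]\partial_{x_j}U$ and $[\Lambda^s J_\eps,\Op[\mL_U]]U$ by $C\|U\|_s^2$ uniformly in $\eps$. The only cosmetic difference is your use of the paraproduct notation $T_a$ in place of the paper's $\Op[a]$ and a slightly different bookkeeping of where the linear $\|U\|_{s-1}$ term arises, neither of which affects the argument.
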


\begin{proof}

First note that
$$\Op[\mM_U]=-\sum_{j=1}^d\Op[A^j_U]\partial_{x_j}+\Op[\mL_U].$$
Hence
$$U_t=\Op[\mM_U]+R_1(U),$$
where 
$$R_1(U)=\sum_{j=1}^d(A^j(U)-\Op[A^j(U)])\partial_{x_j}U+(\mL(U)-\Op[\mL_U])U.$$
By standard estimates on para-differential Operators (cf. ibid.) we find 
$$\|(A^j(U)-\Op[A^j_U])\partial_{x_j}U\|_s \le C\|\partial_{x_j}U\|_{L^\infty}\|U\|_s \le C(\|U\|_s^2+\|U\|_{s-1})$$
and the same holds for $(\mL(U)-\Op[\mL_U])U$. Thus
\begin{equation} 
	\label{eq:R1}
	\|R_1(U)\|_s \le C(\|U\|_{s}^2+\|U\|_{s-1}).
\end{equation}

Now apply $J_\eps\Lambda^s$ to \eqref{eq:haupt} to obtain 
$$W_t=\Op[\mM_U]W+R_\eps(U)$$
where
$$R_\eps(U)=[\Lambda^sJ_\eps, \Op[\mM_U]]U+J_\eps\Lambda^s R_1(U).$$
As $J_\eps$ is a uniformly bounded family of linear operators on $H^l$, $l \in \R$, \eqref{eq:R1} yields
$$\|J_\eps\Lambda^s R_1(U)\|_s \le C(\|U\|_s^2+\|U\|_{s_1}).$$
Furhtermore as $s>d/2+1$ (cf. ibid.) 
$$\|[J_\eps, \Op[A^j_U]]\partial_{x_j}U\|_s \le C\|U\|_s\|\partial_{x_j}U\|_{s-1}$$
and 
$$\|[\Lambda^s,\Op[A^j_U]]\partial_{x_j}J_\eps U\| \le C\|U\|_{s}\|\partial_{x_j}J_\eps U\|_{s-1} \le C\|U\|_s^2.$$
As this also holds for $\mL$ with $\partial_{x_j}$ replaced by $1$ the assertion follows.
\end{proof}

From this point the proof continues analogously to \cite{sro23}. There the following result is shown (although somewhat implicitly) by applying the classical results of \cite{mey81,bon81} and in particular a novel version of the strong G\r{a}rding inequality.
\begin{prop}
	\label{prop:central}
	Let $\mM \in C^{\infty}(\mU \times \R^d, \C^{n \times n})$ be a matrix family with the properties stated in Proposition \ref{prop:nl} and $X \in L^\infty([0,T], L^2(\R^d,\C^n))$.
	Then there exist $\mu, C_\mu>0$ such that for all $v \in C([0,T],H^{s}) \cap C^1([0,T], H^{s-1})$ for some $s>d/2+2$ with
	$$\|v(t)\|^2_s+\|v_t(t)\|_{s-1}^2+\int_0^t\|v(\tau)\|^2_s+\|v_t(\tau)\|_{s-1}^2 d\tau \le \mu,~~t \in [0,T]$$
	 each solutions $W \in C([0,T],H^1) \cap C^1([0,T],L^2)$ of 
	$$W_t=\op[\mM_v]W+X(v)$$
	satisfies
	\begin{equation} 
		\label{central}
		\begin{aligned}
		\|W(t)\|^2+\int_0^t \|W(\tau)\|^2 d\tau &\le C_\mu\|W(0)\|^2\\
		&\quad+C_\mu\int_0^t \|W(\tau)\|^2(\|v(\tau)\|_s+\|v_t(\tau)\|_{s-1}+\|v(\tau)\|_s^{\frac12})\\
		&\quad +\|W(\tau)\|^2_{-1} +\| X(\tau)\|\|W(\tau)\| d\tau,~~t \in [0,T]
		\end{aligned}
	\end{equation} 
\end{prop}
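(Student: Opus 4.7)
My strategy is to turn the uniform symmetrizer $\mD_\infty$ from Proposition \ref{prop:nl} into an energy functional for $W$ that is coercive up to low-frequency errors, and then differentiate along the flow. First, I would extend $\mD_\infty$ smoothly to all $\xi$ by setting $\tilde\mD(U,\xi)=\chi(|\xi|)\mD_\infty(U,\xi)$ for a cut-off $\chi$ equal to $1$ on $\{|\xi|\ge 2\bar r^{-1}\}$ and vanishing on $\{|\xi|\le \bar r^{-1}\}$; by Proposition \ref{prop:nl}(ii) one has $\tilde\mD\in S^0(\mU_0)$, and thus $\tilde\mD_v\in \Gamma^0_k$ for $v\in H^s$, $s>d/2+k$. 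Using the strong G\r{a}rding inequality of \cite{sro23}, the energy
$$E(t):=\Rep\langle\Op[\tilde\mD_v(t)]W(t),W(t)\rangle$$
satisfies a two-sided bound $c_0\|W(t)\|^2-C_0\|W(t)\|_{-1}^2\le E(t)\le C_1\|W(t)\|^2$.

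Next, I would differentiate $E$ in time and substitute $W_t=\Op[\mM_v]W+X$ to obtain
$$\dot E=2\Rep\langle\Op[\tilde\mD_v]\Op[\mM_v]W,W\rangle+\Rep\langle\Op[\partial_t\tilde\mD_v]W,W\rangle+2\Rep\langle\Op[\tilde\mD_v]X,W\rangle.$$
The composition rule for para-differential operators gives $\Op[\tilde\mD_v]\Op[\mM_v]=\Op[\tilde\mD_v\mM_v]+R_1$, where $R_1$ is of order $-1$ with operator norm controlled by $C\|v\|_s$. The pointwise bound
$$\Rep(\tilde\mD_v(x,\xi)\mM_v(x,\xi))=\Rep(\mD_\infty(v(x),\xi)\mM(v(x),\xi))\le-\bar c\, I_n\quad\text{on }|\xi|\ge 2\bar r^{-1}$$
from Proposition \ref{prop:nl}(i) then feeds into the strong G\r{a}rding inequality to yield
$$\Rep\langle\Op[\tilde\mD_v\mM_v]W,W\rangle\le -c\|W\|^2+C\|W\|_{-1}^2+C\|v\|_s^{1/2}\|W\|^2,$$
the last term arising from the Sobolev regularity of the symbol in the $x$-variable. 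The other contributions are bounded, respectively, by $C\|v_t\|_{s-1}\|W\|^2$ (from $\partial_t\tilde\mD_v$, using $\partial_t\tilde\mD_v=(\partial_U\tilde\mD)_v\cdot v_t$) and $C\|X\|\|W\|$ (from the source).

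Combining everything gives the differential inequality
$$\dot E+c\|W\|^2\le C\|W\|_{-1}^2+C\bigl(\|v\|_s+\|v_t\|_{s-1}+\|v\|_s^{1/2}\bigr)\|W\|^2+C\|X\|\|W\|,$$
and integrating in $t$, together with the two-sided bound relating $E$ and $\|W\|^2$, delivers exactly \eqref{central}.

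\textbf{Expected main obstacle.} The sensitive point is the sharp G\r{a}rding-type bound whose error is measured in $\|W\|_{-1}^2$ (rather than $\|W\|^2$ or $\|W\|_{-1/2}^2$) and whose symbol-regularity dependence appears only through the $\|v\|_s^{1/2}$ factor; these quantitative refinements are what make \eqref{central} strong enough to close when combined with the linear decay rate $(1+t)^{-d/4}$ from Proposition \ref{prop:sobd}. This is precisely the novel variant of the strong G\r{a}rding inequality developed in \cite{sro23}, and correctly invoking it (together with verifying that the extended symbol $\tilde\mD$ still enters its hypotheses) is the delicate part; the remaining commutator and composition estimates are standard consequences of the para-differential calculus of \cite{bon81, mey81, met08}.
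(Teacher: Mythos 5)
Your proposal is correct and follows essentially the same route as the paper, which itself does not write out a self-contained argument but defers to \cite{sro23}, describing exactly the strategy you use: an energy $\Rep\langle\Op[\tilde\mD_v]W,W\rangle$ built from the symmetrizer of Proposition \ref{prop:nl}, the classical para-differential composition/commutator estimates of \cite{bon81,mey81,met08}, and the refined strong G\r{a}rding inequality with $\|W\|_{-1}^2$ error and $\|v\|_s^{1/2}$ symbol-regularity factor, yielding precisely the terms on the right of \eqref{central}. The only slip is cosmetic: the composition remainder $\Op[\tilde\mD_v]\Op[\mM_v]-\Op[\tilde\mD_v\mM_v]$ has order $0+1-1=0$ (not $-1$), but its contribution is still bounded by $C\|v\|_s\|W\|^2$ as you claim.
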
 

Applying this proposition with $v=U, X=R_\eps(U)$  finishes the proof of Proposition \ref{prop:energy}. Indeed by definition
$W=W_\eps= \Lambda^s J_\eps U \to \Lambda^sU$ for $\eps \to 0$ uniformly with respect to $t$ and we have $\|U_t\|_{s-1} \le C_\mu \|U\|_s$ since $U$ solves \eqref{eq:haupt}.  Thus by Lemma \ref{lem:c},  \eqref{central} gives for $\mu \le 1$
\begin{equation*} 
	\label{central2}
	\begin{aligned}
		&\sup_{\tau \in [0,T]}\|U(\tau)\|^2_s+\int_0^t \|U(\tau)\|^2_s d\tau \le C_\mu\|U(0)\|^2_s+C_\mu\int_0^t\|U(\tau)\|_s^3+\|U(\tau)\|_s^{\frac52}+\|U(\tau)\|^2_{s-1}  d\tau\\
		& \le C_\mu \|U(0)\|^2_s+C_\mu \left(\sup_{\tau \in [0,T]} \|U(\tau)\|^2_s+ \int_0^t \|U(\tau)\|^2 d\tau \right)^{\frac32}+C_\mu \int_0^t \|U(\tau)\|^2_{s-1}  d\tau,
	\end{aligned}.
\end{equation*} 
Hence, if we choose $\mu$ sufficiently small \eqref{eq:wichtig} is satisfied.

\section{Relation to entropy dissipative systems}

In this section we show Propositions \ref{prop:ed} and \ref{prop:k}. We start by recalling the structural conditions of \cite{yon04}.

\begin{defi}
	We call a hyperbolic balance law \eqref{eq:haupt} (not necessarily in normal form) entropy dissipative (at $\bar U=0$) if it satisfies the following (structural conditions (i)-(iii) in \cite{yon04}, Section 1)
	\begin{enumerate}
		\item[(i)] $q_v(0)$ is invertible.
		\item[(ii)] There exists a smooth strictly convex entropy function $\eta: G \to \R$, ($0 \in G \subset \mU$ compact and convex) such that $D^2\eta(U)Df^j(U)$ is symmetric for all $U \in G$, $j=1,\ldots,d$.
		\item[(iii)] There exists a constant $c_G>0$ such that for all $U \in G$
		$$(D\eta(U)-D\eta(0))Q(U) \le -c_G|Q(U)|^2$$.
	\end{enumerate}
\end{defi}

\begin{proof}[Proof of Proposition \ref{prop:ed}]
	Clearly condition (ii) implies (H) with $S(U,\omega)=D^2\eta(U)$. 
	
	Condition (i) allows us to write \eqref{eq:haupt} in normal form \eqref{nf}, \eqref{nf2}. With $\phi$ as in \eqref{v}, $\psi=\phi^{-1}$ set $T(u):=D\psi(u)$, $T=T(0)$. As shown in \cite{yon04} (Theorem 2.1) conditions (i)-(iii) imply that there exist symmetric positive definite matrices $A^0_1 \in \R^{m \times m}, A^0_2 \in \R^{r \times r}$ such that
	$$A^0:=T^*D^2\eta(0)T=\begin{pmatrix} A^0_1 & 0\\ 0 & A^0_2\end{pmatrix}.$$ and
	\begin{equation}
		\label{t}
		T^*\left(D^2\eta(0) DQ(0)+DQ(0)^*D^2 \eta(0)\right)T=\begin{pmatrix}
			0 & 0\\
			0 & \tilde L
		\end{pmatrix}
	\end{equation} 
	with $\tilde L \in \R^{r \times r}$, $\Rep  \tilde L<0$.
	For 
	$$\mL(V)=\begin{pmatrix}
		0 & 0 \\
		0 & q_v(\psi(V))
	\end{pmatrix}$$ we have $\mL(0)=T^{-1}DQ(0)T.$
	Thus \eqref{t} reads
	$$A^0\mL(0)+\mL(0)^*A^0=\begin{pmatrix}
		0 & 0\\
		0 & \tilde L
	\end{pmatrix}$$
	In particular 
	$$\Rep A^0_2q_v(0)=\tilde L <0,$$
	which also gives
	$$\Rep (A^0_2)^{\frac12}q_v(0)(A^0_2)^{-\frac12}=(A^0_2)^{-\frac12}(\Rep \tilde L)(A^0_2)^{-\frac12}<0.$$
	We have shown that $q_v(0)$ is similar to a matrix with negative real part, which implies that all eigenvalues of $q_v(0)$ have negative real-part. As $q$ is smooth this also holds in a neighbourhood of $0$, i.e. (RH) is satisfied. Additionally, condition (ii)  yields that
	$$A^0T^{-1}A(0,\omega)T=T^*D^2\eta(0)A(0,\omega)T$$
	is symmetric and so is
	$$(A^0)^{\frac12}T^{-1}A(0,\omega)T(A^0)^{-\frac12}=(A^0)^{-\frac12}(A^0)T^{-1}A(0,\omega)T(A^0)^{-\frac12}.$$
	
	Note that (D1)-(D3) are formulated for systems in normal form (in fact, this is only relevant regarding (D2)). Thus, a-priori we have to use $\tilde A(\omega):=T^{-1}A(0,\omega)T$ instead of $A(\omega)$ in (D1), (D2), (D3). But clearly (K) and (D1)-(D3) are invariant under the basis transformation  $(A^0)^{-\frac12}$ since it is $\R^m \times \R^r$ block-diagonal. Thus we can show the equivalence for the matrices  $(A^0)^{\frac12}\mL(A^0)^{-\frac12}$ and 
	$(A^0)^{\frac12}\tilde A(\omega)(A^0)^{-\frac12}$ . In other words we may assume w.l.o.g. that
	$A(\omega)$ is symmetric and $\mL$ is of the form
	$$\mL=\begin{pmatrix}0 & 0\\ 0 & L\end{pmatrix},~~\Rep L<0.$$
	That in this case (D1) is equivalent to (K) is a standard result \cite{bia07}. 
	
	Next, we show that (D2) is equivalent to (K). As $\ker \mL=\R^m \times \{0\}$, (K) is equivalent to 
	\begin{equation}
		\label{k0}
		\ker (\lambda+A_{11}(\omega))\cap \ker (A_{21}(\omega))=\{0\},~~\lambda \in \R, \omega \in S^{d-1}.
	\end{equation}
	Since
	$$\Rep L^{-1}=(L^*)^{-1}(\Rep L)L^{-1}$$
	$\Rep L^{-1}$ is also negative definite. Now , suppose (K) holds. For an eigenvalue $\mu_l$ of $A_{11}$ choose any unitary right projector $J_{Rl}$, $J_{Ll}=(J_{Rl})^*$. The symmetry of $A$ and the definiteness of $\Re L^{-1}$ then yield
	\begin{align} 
		\nonumber
		\Rep (iJ_{Ll}A_{11}J_{Rl})&=0,\\
		\label{jl}
		\Rep\big((J_{Rl})^*A_{21}^*L^{-1}A_{21}J_{Rl}\big) &\le 0.
	\end{align}
	\eqref{k0} can also by expressed as $A_{21}J_{Rl}v \neq 0$ for all $v \in \C^{\alpha_l}\setminus\{0\},~~l=1,\ldots,\bar l.$ Then $\Rep L<0$ and \eqref{jl} imply
	$$\Rep\big((J_{Rl})^*A_{21}^*L^{-1}A_{21}J_{Rl}\big) <0.$$
	If on the other hand (D2) holds with smooth projectors $J_{Rl}(\omega)$, due to the symmetry of $A_{11}$ we can apply a smooth basis transformation $T_l(\omega) \in \C^{\alpha_l \times \alpha_l}$ such that $\tilde J_{Rl}T$ is orthonormal and (K) follows with the argumentation above. 
	
	As (K) is also equivalent to the fact that for each eigenvalue $\nu_l$ of $A$ and each right projector $\mJ_{Rl}$ it holds
	$$\ker \mL \mJ_{Rl}=\{0\},$$
	the equivalence between (D3) and (K) follows as above with $A_{11}$ replaced by $A$ and $A_{12}L^{-1}A_{21}$ by $\mL$. 
	
	Lastly, note that (ii) implies that with $A^0(u)=T(u)^*D^2\eta(u)T(u)$
	$$\check A(u,\omega)=(A^0(u))^{\frac12}(T(u)^{-1}A(u,\omega)T(A^0(u))^{-\frac12}$$
	is symmetric also in neighbourhood of $\bar U=0$. Now for some $\bar \omega \in S^{d-1}$ and an eigenvalue $\nu_l$ of $\check A(0,\bar \omega)$ let $\mJ_{Rl}(u,\omega)$, $(u,\omega)$ in some neighbourhood of $(0,\bar \omega)$, be a smooth unitary right projector corresponding to the $\nu_l$-group of eigenvalues. We have shown above that then
	$$\Rep (\mJ_{Rl}^*(0,\bar \omega)\check \mL(0)\mJ_{Rl}(0,\bar \omega)<0,$$
	with
	$$\check \mL(u)=(A^0(u))^{\frac12}(T(u)^{-1}D_uQ(u)T(A^0(u))^{-\frac12}$$
	By continuity 	
	\begin{equation} 
		\label{hl1}
		\Rep (\mJ_{Rl}^*(u, \omega)\check \mL(u)\mJ_{Rl}(u, \omega)<0,
	\end{equation}
also holds for $(u,\omega)$ in a neighbourhood of $(u,\bar \omega)$.  As $\check A(u,\omega)$ is symmetric and $\mJ_{Ll}(u,\omega)=\mJ_{Ll}^*(u,\omega)$, clearly
	\begin{equation} 
		\label{hl2}
		\Rep(-i\mJ_{Ll}(u,\omega)\check A(u,\omega)\mJ_{Rl}(u,\omega))=0.
	\end{equation}
	\eqref{hl1} and \eqref{hl2} show (D3)'.
\end{proof} 	 

\begin{proof}[Proof of Proposition \ref{prop:k}]
	We just need to show that (D3) implies (K) (for $\mL=D_UQ(0)$). Then the assertion follows from Proposition \ref{prop:none} (ii) with $Q(U)=\mL U$, $f^j(U)=A^jU$. Assume that (K) is not satisfied. Then there exist $(\lambda,\bar \omega) \in \C \times S^{d-1}$ and 
	$$0 \neq v \in \operatorname{ker}(\lambda-A(\bar \omega)) \cap \operatorname{\ker}(\mL).$$
	This implies that for a right projector on the eigenspace corresponding to $\lambda$ there exist $w \in \R^{\alpha}$, $\alpha$ being the dimension of this eigenspace, with $v=J_R(\bar \omega)w$ 
	and
	$$0=\mL v=\mL J_{R}(\bar \omega)v.$$
	Thus 
	$$\Rep (J_{L}(\bar \omega)\mL J_{R}(\bar \omega))<0$$
	does not hold. Since this is true for all right projectors, (D3) is not satisfied. 
\end{proof}

\section{Application to Jin-Xin relaxation systems}
In this section we treat Jin-Xin systems \eqref{xc}. We will not restrict ourselves to scalars $b^1,\ldots,b^d$ but allow for $b^1,\ldots,b^d$ to be symmetric positive definite matrices. With $U=(U^1,U^2,\ldots, U^{d+1})=(u,v^1,\ldots,v^d) \in \R^{m(d+1)}$ we write \eqref{xc} as 
 \begin{equation} 
 	\label{xc1}
 	U_t+\sum_{j=1}^d\tilde A^j U_{x_j}=Q(U),
 \end{equation}
where 
$$\tilde A^j=\begin{pmatrix}
	0_{m \times m} & (E^j)^t\\
	B^j & 0_{dm \times dm}
\end{pmatrix},$$
with block matrices
\begin{align*} 
	E^1=\begin{pmatrix} I_m \\ 0 \\ \vdots \\ 0 \end{pmatrix},~~ E^2&=\begin{pmatrix}  0\\ I_m \\ \vdots \\ 0 \end{pmatrix},\ldots, E^d=\begin{pmatrix} 0 \\ 0 \\ \vdots \\ I_m \end{pmatrix},\\ B^1=\begin{pmatrix}b^1 \\ 0\\ \vdots \\0\end{pmatrix},~~B^2&=\begin{pmatrix}0 \\ b^2\\ \vdots \\0\end{pmatrix},\ldots, B^d=\begin{pmatrix}0 \\ 0\\ \vdots \\b^d\end{pmatrix} \in \R^{md \times m}.
\end{align*}
and
\begin{equation} 
	\label{qu}
	Q(U)=\begin{pmatrix} 0_{m \times 1} \\q(U)\end{pmatrix} \in \R^{d(m+1)},~~q(U)=-\frac1{\eps}(v-F(u)), ~~F(u)=\begin{pmatrix}F^1(u)\\ \vdots\\F^d(u) \end{pmatrix}\in \R^{md}.
\end{equation}

The symbol of \eqref{xc1} is given as
\begin{equation} 
	\label{a}
	\tilde A(\omega)=\sum_{k=1}^d\tilde A^k\omega_k=\begin{pmatrix} 0 &\Omega^t \\ B(\omega) &0\end{pmatrix}, 
\end{equation}
with 
$$\Omega=\begin{pmatrix}
	\omega_1I_m \\ \vdots \\ \omega_dI_m
\end{pmatrix},\quad B(\omega)=\begin{pmatrix}
	\omega_1b^1 \\ \vdots \\ \omega_d b^d
\end{pmatrix},~~\omega \in S^{d-1}.$$
We also need
$$K(u,\xi)=\sum_{j=1}^d (D_uF^j)(u)\xi_j,\quad K(\xi)=K(0,\xi)$$
and
$$\mB(\xi)=\sum_{j=1}^d b^j\xi_j^2, ~~\xi \in \R^d.$$ 

We assume $F(0)=0$ w.l.o.g such that $(u,v)=(0,0)$ is a rest state of \eqref{xc1}.

Proposition \ref{prop:xc} is then a consequence of the following three results.

\begin{lemma}
	\label{lem:jx3}
	\eqref{xc1} satisfies (H) and (RH).
\end{lemma}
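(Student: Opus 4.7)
The plan is to verify (H) and (RH) directly from the block structure of the Jin-Xin system, both of which turn out to be essentially immediate given the form of \eqref{xc1}, \eqref{a}, \eqref{qu}.

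For (H), I exhibit the constant, block-diagonal symmetrizer
$$S = \dia\bigl(I_m,\,(b^1)^{-1},\,(b^2)^{-1},\,\ldots,\,(b^d)^{-1}\bigr) \in \R^{m(d+1) \times m(d+1)}.$$
Since each $b^j$ is symmetric positive definite, so is each $(b^j)^{-1}$, and hence $S$ is symmetric positive definite with two-sided bounds depending only on $b^1,\ldots,b^d$. As $S$ is independent of both $U$ and $\omega$, the smoothness and derivative-boundedness clauses of (H) are trivial. It then remains to check that $S \tilde A(\omega)$ is Hermitian: using \eqref{a}, the lower-left block $\dia((b^1)^{-1},\ldots,(b^d)^{-1})\, B(\omega)$ collapses to $\Omega$, which matches the transpose of the upper-right block $\Omega^t$, so $S \tilde A(\omega)$ is symmetric.

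For (RH), the source term \eqref{qu} is already in the required block form $Q(U)=(0_{\R^m}, q(U))^t$ with $q(U) = -\eps^{-1}(v - F(u))$, and therefore $D_v q(U) = -\eps^{-1} I_{md}$, whose eigenvalues are all equal to $-\eps^{-1}<0$.

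I do not expect any substantive obstacle: the Jin-Xin block structure is tailored precisely to make both conditions transparent. The only choice with any content is guessing the weights $(b^j)^{-1}$ on the diagonal of $S$, but this is essentially forced by the need to convert the $B(\omega)$ block on the lower left of $\tilde A(\omega)$ into the transpose of the $\Omega^t$ block on the upper right.
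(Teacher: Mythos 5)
Your proof is correct and takes essentially the same approach as the paper: exhibit a constant block-diagonal symmetrizer and observe that $D_vq(U)\equiv-\eps^{-1}I_{md}$, whose eigenvalues are all negative. One small point in your favour: for the block convention of \eqref{a} your choice $S=\dia\bigl(I_m,(b^1)^{-1},\ldots,(b^d)^{-1}\bigr)$ is the one that actually makes $S\tilde A(\omega)$ symmetric (the lower-left block becomes $(b^j)^{-1}\omega_j b^j=\omega_j I_m$, matching $\Omega$), whereas the paper's stated symmetrizer $\dia(I_m,b^1,\ldots,b^d)$ yields lower-left blocks $\omega_j(b^j)^2$ and hence symmetrizes only when $(b^j)^2=I_m$ -- evidently a slip in the paper for the inverse weights you use.
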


\begin{lemma}
	\label{prop:jx2}
	\begin{enumerate}
		\item[(i)]\eqref{xc1} satisfies (D1) if and only if 
		\begin{enumerate}
			\item[(D1)$_2$]
			all solutions $(\lambda,\xi) \in \C \times \R^d\setminus\{0\}$ of 
			\begin{equation}
				\label{det}
				\det(\lambda^2I_m+\mB(\xi)+iK(\xi)+\lambda I_m)=0.
			\end{equation}
			satisfy $\Rep \lambda <0$.
		\end{enumerate}
		\item[(ii)] \eqref{xc1} satisfies (D2) if and only if 
		\begin{enumerate}
			\item[(D2)$_2$]
			$(-\mathcal B(\omega)+K(\omega)^2)_{\omega \in S^{d-1}}$ is stably compatible with $(-iK(\omega))_{\omega \in S^{d-1}}$.
		\end{enumerate}
		\item[(iii)] \eqref{xc1} satisfies (D3)$'$ if and only if
		\begin{enumerate}
			\item[(D3)$_2$]
			the matrices $(-I_m \pm \mathcal B(\omega)^{-\frac12}K(\omega))_{\omega \in S^{d-1}}$  are stably compatible with $(i\mB(\omega))_{\omega \in S^{d-1}}$.
		\end{enumerate}
		
	\end{enumerate}
\end{lemma}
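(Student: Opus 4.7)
I will treat the three parts in turn; throughout, I use that by the Friedrichs symmetrizability of \eqref{cl}, $K(\omega)=\sum_j\omega_j D_uF^j(0)$ has real semi-simple eigenvalues for every $\omega\in S^{d-1}$.

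Parts (i) and (ii) are direct block-matrix computations. For (i), at the rest state $\bar U=(0,0)$ one computes $D_UQ(\bar U)=\begin{pmatrix}0 & 0\\ \mathbf K & -I_{md}\end{pmatrix}$, with $\mathbf K\in\R^{md\times m}$ the vertical stack of the $D_uF^j(0)$. Expanding the determinant of $\lambda I+i\tilde A(\xi)-D_UQ(\bar U)$ via the Schur complement of the invertible block $(\lambda+1)I_{md}$, and using the elementary identities $(E^j)^tB^k=\delta_{jk}b^j$ and $(E^j)^t\mathbf K=D_uF^j(0)$ (which give $\sum_{j,k}\xi_j\xi_k(E^j)^tB^k=\mathcal B(\xi)$ and $\sum_j\xi_j(E^j)^t\mathbf K=K(\xi)$), yields
$$\det\bigl(\lambda I+i\tilde A(\xi)-D_UQ(\bar U)\bigr)\;=\;(\lambda+1)^{m(d-1)}\det\bigl(\lambda^2 I_m+\lambda\,I_m+\mathcal B(\xi)+iK(\xi)\bigr),$$
so the roots other than $\lambda=-1$ are exactly those in (D1)$_2$. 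For (ii), transform to normal form via $\phi(u,v)=(u,F(u)-v)$, which satisfies $D\phi(0)=D\phi(0)^{-1}$. The conjugation $\tilde A_{\mathrm{nf}}(0,\omega)=D\phi(0)\tilde A(\omega)D\phi(0)$ reads off $A_{11}(\omega)=K(\omega)$, $A_{12}(\omega)=-\Omega^t$, $A_{21}(\omega)=\mathbf K\,K(\omega)-B(\omega)$, and $L=q_v(0)=-I_{md}$, so $A_{12}L^{-1}A_{21}=\Omega^t(\mathbf K\,K(\omega)-B(\omega))=K(\omega)^2-\mathcal B(\omega)$, which is exactly (D2)$_2$.

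Part (iii) is the main technical step. Since stable compatibility is invariant under smooth similarity (by conjugation of the projectors), (D3)$'$ is equivalent to stable compatibility of $\mathcal L'(V):=D\phi(V^1)\mathcal L(V)D\phi(V^1)=\begin{pmatrix}0 & 0\\ D_uF(V^1) & -I\end{pmatrix}$ with $\tilde A(\omega)$ in the original coordinates of \eqref{xc1}, for $V$ in a neighborhood of $0$. My plan is to block-diagonalize $\tilde A(\omega)$ locally around each $\bar\omega\in S^{d-1}$ by the smooth similarity
$$T(\omega)\;=\;\begin{pmatrix} I_m & I_m & 0\\ B(\omega)\mathcal B(\omega)^{-1/2} & -B(\omega)\mathcal B(\omega)^{-1/2} & T_0(\omega)\end{pmatrix},$$
with $T_0(\omega)$ a smooth local frame for $\ker\Omega(\omega)^t$ (existing by the implicit function theorem since $\Omega(\omega)^t$ has constant rank $m$). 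Using $\Omega^tB=\mathcal B$, $\Omega^t\mathbf K=K(\omega)$, and $\Omega^tT_0=0$, one verifies
$$T^{-1}\tilde A(\omega)T\;=\;\operatorname{diag}\bigl(\mathcal B^{1/2}(\omega),\,-\mathcal B^{1/2}(\omega),\,0_{m(d-1)}\bigr),$$
and the three diagonal blocks of $T^{-1}\mathcal L'(0)T$ compute to $\tfrac12(-I+\mathcal B^{-1/2}K(\omega))$, $\tfrac12(-I-\mathcal B^{-1/2}K(\omega))$, and $-I_{m(d-1)}$.

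Since these three diagonal blocks of $T^{-1}\tilde A T$ have pairwise disjoint spectra, any smooth projector onto an eigenvalue group of $\tilde A(\omega)$ is confined to one of the three subspaces, so the corresponding reduction of $\mathcal L'(0)$ sees only the matching diagonal block (the off-diagonal blocks of $T^{-1}\mathcal L'(0)T$ drop out). The zero-eigenvalue block is trivial, since its $\mathcal L'$-reduction is $-I$ and its $\tilde A$-reduction is $0$. On the $\pm$ blocks, stable compatibility of $\tfrac12(-I\pm\mathcal B^{-1/2}K(\omega))$ with $\mp i\mathcal B^{1/2}(\omega)$ is equivalent to stable compatibility with $\pm i\mathcal B(\omega)$: the two positive-definite families $\mathcal B^{1/2}$ and $\mathcal B$ share the same smoothly varying eigenspaces with the same local groupings around $\bar\omega$ (since $\sqrt{\cdot}$ is injective on $(0,\infty)$), and in both cases the condition $\operatorname{Re}(J_LHJ_R)=0$ holds automatically because each reduction is a scalar multiple of the identity. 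Absorbing the immaterial factor $\tfrac12$ and sign of $H$ then yields exactly (D3)$_2$, and the extension from (D3) at $V=0$ to (D3)$'$ on a full neighborhood follows by continuity, since $\mathcal L'(V)$ depends smoothly on $V^1$ and $\operatorname{Re}<0$ is an open condition. The main obstacle is to justify the smoothness of $T(\omega)$ across crossings of eigenvalues of $\mathcal B(\omega)$; this is handled by using only smooth bases for total-projection subspaces (not for individual eigenspaces), which exist locally by standard perturbation theory.
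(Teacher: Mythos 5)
Your proposal is correct and follows essentially the same route as the paper: factor the dispersion relation into the trivial $\lambda=-1$ factor and \eqref{det}, pass to normal form to identify $A_{11}=K(\omega)$ and $A_{12}L^{-1}A_{21}=K(\omega)^2-\mathcal B(\omega)$, and block-diagonalize $\tilde A(\omega)$ into the $\pm\mathcal B(\omega)^{1/2}$ and kernel blocks to read off the reduced blocks $\tfrac12(-I\pm\mathcal B^{-1/2}K)$ and $-I$ of $\mathcal L$. The only differences are cosmetic (a Schur-complement determinant in place of the paper's eigenvector computation in (i), and a one-step similarity in (iii) where the paper first symmetrizes with $\operatorname{diag}(I,(b^1)^{1/2},\ldots,(b^d)^{1/2})$); your explicit justification for replacing $\mp i\mathcal B^{1/2}$ by $i\mathcal B$ and discarding the factor $\tfrac12$ is a welcome addition the paper leaves implicit.
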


\begin{lemma}
	\label{lem:jx}
	If $b^j$ is a multiple of  the identity matrix. Then the dissipation condition
	$$\Rep(\delta_{jk}b^j-D_uF^jD_uF^k)_{1\le j,k \le d} >0$$
	implies (D1)$_2$, (D2)$_2$ and (D3)$_2$.
\end{lemma}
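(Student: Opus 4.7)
The plan is to reduce all three conditions to a single sub-characteristic inequality: for every $\omega \in S^{d-1}$ and every eigenvalue $\mu$ of $K(\omega)$ one has $\mu^2 < \sum_j \beta_j \omega_j^2$, i.e.\ $\mu^2 I_m < \mathcal B(\omega)$.

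First I would record the algebraic setup. Friedrichs symmetrizability of \eqref{cl} provides a positive definite symmetric $s$ with $s A_j$ symmetric for each $j$, whence $\tilde K(\omega) := s^{1/2} K(\omega) s^{-1/2}$ is real symmetric. Consequently $K(\omega)$ has real semi-simple eigenvalues, and smooth local unitary spectral projectors for the Hermitian family $\tilde K$ exist (for example, via Riesz integrals around each eigenvalue group) and yield, after conjugation with the $\omega$-independent matrices $s^{\pm 1/2}$, smooth right/left projectors $J_{Rl}, J_{Ll}$ for $K(\omega)$. The sub-characteristic inequality then follows by testing the dissipation condition against $\eta \in \mathbb C^{md}$ with $\eta_j = \omega_j v$, where $v \neq 0$ is an eigenvector of $K(\omega)$ at $\mu$: one computes $\sum_{j,k} \eta_j^* A_j A_k \eta_k = v^* K(\omega)^2 v = \mu^2|v|^2$ and $\sum_j \beta_j|\eta_j|^2 = \big(\sum_j \beta_j \omega_j^2\big)|v|^2$, so the strict positivity of $\Rep(\beta_j \delta_{jk} I - A_j A_k)_{jk}$ forces $\mu^2 < \sum_j \beta_j \omega_j^2$.

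Given this inequality, each condition is short. For (D1)$_2$: since $\mathcal B(\xi)$ is a scalar matrix and commutes with $K(\xi)$, the determinantal equation \eqref{det} factors, in a basis diagonalizing $K(\xi)$, into scalar quadratics $\lambda^2 + \lambda + \mathcal B(\xi) + i|\xi|\mu_j = 0$, and a short analysis with the quadratic formula shows $\Rep\lambda < 0$ exactly when $\mu_j^2 < \mathcal B(\omega)$. For (D2)$_2$: after transferring to $\tilde K$, $-i\tilde K$ is anti-Hermitian and $-\mathcal B + \tilde K^2$ is Hermitian, so Lemma~\ref{lem:sym} reduces stable compatibility to (SC)$'$; on each $\mu_l$-eigenspace of $K(\omega)$, $-\mathcal B + K^2$ acts as the real scalar $\mu_l^2 - \mathcal B(\omega) < 0$, whence $J_{Ll}(-\mathcal B + K^2)J_{Rl} = (\mu_l^2 - \mathcal B(\omega)) I_{\alpha_l}$ has strictly negative eigenvalues. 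For (D3)$_2$: since $i\mathcal B(\omega)$ is a scalar anti-Hermitian matrix whose single semi-simple eigenspace is all of $\mathbb C^m$, $\Rep(J_L(i\mathcal B)J_R) = 0$ for every dual pair; taking the constant projectors $J_R = s^{-1/2}$ and $J_L = s^{1/2}$, we find $J_L K(\omega) J_R = \tilde K(\omega)$ Hermitian, so $\Rep(-I \pm \mathcal B(\omega)^{-1/2} J_L K J_R) = -I \pm \mathcal B(\omega)^{-1/2}\tilde K$ has eigenvalues $-1 \pm \mathcal B(\omega)^{-1/2}\mu_l$, strictly negative precisely when $\mu_l^2 < \mathcal B(\omega)$.

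The main obstacle is the verification of (D2)$_2$: one must produce smooth projectors on the $\mu_l$-group of eigenvalues of $K(\omega)$ on a full neighbourhood of each $\bar\omega$ and check the anti-Hermiticity $\Rep(J_{Ll}(-iK)J_{Rl}) = 0$ everywhere on this neighbourhood, not merely at $\bar\omega$. This is handled by first constructing smooth unitary spectral projectors for the Hermitian family $\tilde K$ and then conjugating by the $\omega$-independent (hence smooth) matrices $s^{\pm 1/2}$, which transfers the identity $\Rep(\tilde J_{Ll}(-i\tilde K)\tilde J_{Rl}) = 0$ pointwise to $K$.
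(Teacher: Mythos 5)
Your proof is correct and follows essentially the same route as the paper: both reduce everything to the sub-characteristic inequality $\mu^2<\beta(\omega)$ obtained by testing the dissipation condition with $\eta_j=\omega_j v$, and both use the Friedrichs symmetrizer $s^{1/2}$ to symmetrize $K(\omega)$ so that unitary spectral projectors verify stable compatibility for (D2)$_2$ and (D3)$_2$. The only (immaterial) difference is in (D1)$_2$, where you diagonalize and apply a Routh--Hurwitz analysis to the scalar quadratics, while the paper invokes the remark that, given (D2) and (D3), it suffices to exclude purely imaginary roots, which it does by a direct contradiction.
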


\begin{proof}[Proof of Lemma \ref{lem:jx3}]
	As $b^1,\ldots,b^d$ are positive definite $\mS=\operatorname{diag}(I_m,b^1,\ldots,b^d)$ is a symmetrizer for \eqref{xc}. (RH) is also trivially satisfied since $q_v=-I_{md}$
\end{proof}

\begin{proof}[Proof of Lemma \ref{prop:jx2}]
	(i) Let $(u,v) \in \R^m \times \R^{md}$ be an eigenvector to 
	$$\mM(\xi)=-i\tilde A(\xi)+D_uQ(0),$$ i.e. for $\xi=r\omega$, $r>0, \omega 
	\in S^{d-1}$
	\begin{align}
		\label{ew1}
		\lambda u+ir\Omega^tv&=0\\
		\label{ew2}
		(\lambda+I) v+(irB(\omega)-D_uF)u&=0.
	\end{align}
	One checks immediately that $\lambda=-1$ is an eigenvalue of multiplicity (at least) $m(d-1)$ with eigenvectors $(0,v)$, $v \in \ker \Omega^t$. On the other hand the eigenvalues with eigenvectors $(u,v)$, $u \neq 0$ are exactly the $2m$ solutions of \eqref{det} (counting multiplicities). Indeed if $u$ is in the kernel of $\lambda^2I_m+|\xi|^2\mB(\omega)+iK(\xi)+\lambda I_m$ then either $\lambda \neq -1$ and $(u,v)$ is an eigenvector with
	$v=(\lambda+1)^{-1}(-irA(\omega)+D_uF(0))u$
	or $\lambda=-1$ and $(u,v)$ is an eigenvector with $v$ in the kernel of
	$(irB(\omega)-D_uF(0))\Omega^t.$
	
	(ii) To treat (D2) we have to bring \eqref{xc} in normal form. To this end we apply the basis transformation 
	$$T=\begin{pmatrix}
		I_m & 0\\
		D_uF & I_{md}
	\end{pmatrix}$$
	and set
	\begin{equation} 
		\label{tildea}
		A^j=T^{-1}\tilde A^jT=\begin{pmatrix}
			(E_j)^tD_uF &(E^j)^t \\B^j-D_uF(E_j)^tD_uF & -D_uF(E_j)^t
		\end{pmatrix},~~\mL=T^{-1}D_UQ(0)T=\begin{pmatrix}
			0 & 0\\
			0 & -I_{md}
		\end{pmatrix}.
	\end{equation}
	In particular
	$$A_{11}(\omega)=\Omega^tD_uF=K(\omega)$$ and
	$$A_{12}(\omega)L ^{-1}A_{12}(\omega)=-\Omega^t(B(\omega)-D_uF\Omega^tD_uF)=-\mB(\omega)+K(\omega)^2.$$
	Thus the assertion follows by definition of (D2).
	
	(iii) With a slight abuse of notation we set
	$$B(\omega)^{\frac12}=\begin{pmatrix}
		(b^1)^{\frac12}\omega_1 & \cdots & (b^d)^{\frac12}\omega_j
	\end{pmatrix}^t.$$
	Note that $$(B(\omega)^{\frac12})^tB(\omega)^{\frac12}=\mB(\omega).$$
	
	First, perform a basis transformation with 
	$$\bar T=\operatorname{diag}(I_m,(b^1)^{\frac12},\ldots,(b^d)^{\frac12})$$
	and consider
	\begin{align*}\bar A(\omega)=\bar T^{-1}A\bar T&=\begin{pmatrix}
			0 & (B(\omega)^{\frac12})^t \\
			B(\omega)^{\frac12} & 0
		\end{pmatrix}\\
		\bar \mL=\bar TD_uQ(0)\bar T^{-1}&=\begin{pmatrix} 0 & 0\\ H^{-\frac12}D_uF & -I_m \end{pmatrix},\quad H=\operatorname{diag}(b^{1},\ldots,b^d).
	\end{align*}
	Next, fix $\bar \omega \in S^{d-1}$. For $\omega$ in neighbourhood of $\bar \omega$ let $\{W^1(\omega), \ldots, W^{m(d-1)}(\omega)\} \subset \R^{md}$ be a smooth family of orthonormal bases for $\ker \Omega^t$  and denote by $W(\omega)$ the matrix with columns $W^1(\omega),\ldots,W^{m(d-1)}(\omega)$. 
	Then for the orthonormal matrix
	$$\mT=\begin{pmatrix}
		0_{m \times m(d-1)} & \frac1{\sqrt2}I_m & \frac1{\sqrt2}I_m\\
		W(\omega) & B(\omega)^{\frac12} (2\mB(\omega))^{-\frac12} & B(\omega)^{\frac12} (2\mB(\omega))^{-\frac12}
	\end{pmatrix}$$
	we get
	$$\mT^{-1}(\omega)\bar A(\omega)\mT(\omega)=\begin{pmatrix} 0_{m(d-1)\times m(d-1)} & 0 & 0\\
		0 & \mathcal B(\omega)^{\frac12} & 0\\
		0 & 0 & -\mathcal B(\omega)^\frac12
	\end{pmatrix}$$
	as well as
	\begin{align*}\mT^{-1}\bar \mL\mT
		&=\begin{pmatrix}
			-I_{m(d-1)\times m(d-1)} & \ast & \ast \\
			\ast &\frac12(\mathcal B(\omega)^{-\frac12}K(\omega)-I_m) & \ast\\
			\ast & \ast &\frac12(-\mathcal B(\omega)^{-\frac12}K(\omega)-I_m)
		\end{pmatrix}.
	\end{align*}
	Hence $\mT^{-1}\bar \mL\mT$ is stably compatible with $\mT^{-1}A\mT$ if and only if (D3)$_2$ holds and (D3) is equivalent to (D3)$_2$ by basis invariance. 
\end{proof}

\begin{proof}[Proof of Lemma \ref{lem:jx}]
	If $b^j=\beta^jI_m$ for some $\beta^j \in \R$ we also have
	$$\mB(\omega)=\beta(\omega)I_m,\quad \beta(\omega)=\sum_{j=1}^d\beta^j\omega_j^2.$$
	Now set
	$$\mH:=(\delta_{jk}b^j-D_uF^jD_uF^k)_{1\le j,k \le d}.$$
	and let 
	\begin{equation} 
		\label{h}
		\Rep \mH>0.
	\end{equation} 
	Clearly, $W^*\mH W>0$ for all $W \in \C^{md}\setminus\{0\}$ implies
	$$0<\sum_{j,k=1}w^t(\delta_{jk}b^j-D_uF^jD_uF^k)\omega_j\omega_kw=w^*(\beta(\omega)I_m-K(\omega)^2)w$$
	for all $\omega \in S^{d-1}, w \in \R^m\setminus\{0\}$, i.e. 
	\begin{equation} 
		\label{posdef}
	\mB(\omega)-\Rep K(\omega)^2>0.
	\end{equation}
	This is clearly a special case of (D2). 

	Let $s(u)_{u \in \mU}$ be a family of Friedrichs symmetrizer for \eqref{cl}. \eqref{posdef} also implies that each eigenvalue $\mu^2$ of $K(\omega)^2$ satisfies $\Rep \mu^2 < \beta(\omega)$.  As $K(u,\omega)=\sum_{j=1}^dD_uF^j\omega_j$ is symmetrizable all eigenvalues of $K(\omega)$ are real and thus all eigenvalues $\mu$ of $K(\omega)$ satisfy $|\mu| <\beta^{\frac12}$. Since $$\tilde K(\omega)=s(0)^{\frac12}K(\omega)s(0)^{-\frac12}$$ is symmetric this yields
	\begin{equation} 
	\label{posdef2}
	\beta(\omega)^\frac12I_m\pm \tilde K(\omega)>0,
	\end{equation}
	which gives (D3).
	
	As (D2) and (D3) are already proven (D1) holds if  all
	solutions $(\lambda,\xi) \in \C \times \R^{d}\setminus\{0\}$ to \eqref{det} satisfy $\lambda \notin i\R$. Suppose there are $\mu \in \R, \xi \in \R^{d}\setminus\{0\}$ and $ V \in S^{m-1}$ with
	$$((i\mu)^2+\mB(\xi)+iK(\xi)+i\mu I_m)V=0.$$
	Applying the basis transformation $s(0)^\frac12$ and taking the scalar product with $\tilde V=s(0)^{\frac12}V$ gives
	$$(\mu^2-\beta(\xi))=0$$
	and 
	$$\tilde V^*(\tilde K(\xi)+\mu I_m)\tilde V=0.$$
	Thus (w.l.o.g. assume $\mu>0$)
	$$\tilde V^*(\tilde K(\xi))+\beta^{\frac12})\tilde V=0,$$
	which contradicts \eqref{posdef2}
\end{proof}

\begin{proof}[Proof of Proposition \ref{prop:nd}]
	 Suppose that \eqref{xc} is entropy dissipative and let $A^j,\mL$ be defined by  \eqref{tildea}. As seen in the proof of Proposition \ref{prop:ed} there exist symmetric positive definite matrices $S^1 \in \R^{m \times m}, S^2 \in \R^{md \times md}$ such that with $S=\operatorname{diag}(S^1, S^2)$ all matrices $SA^j$ are symmetric. For $j=1$ we find 
	$$S^2D_uFE^1=\begin{pmatrix} S^2D_uF & 0_{md \times m(d-1)}\end{pmatrix}$$
	As we assumed this matrix to be symmetric we get
	\begin{equation} 
		\label{nd1}
		\sum_{l=1}^dS^2_{kl}D_uF^l=0,~~k=2,\ldots, d.
	\end{equation}
	Also 
	$$S^1(E^1)^t=\begin{pmatrix} S_1^1 & 0_{md \times m(d-1)} \end{pmatrix},$$
	where $S^1_1$ is the first row of $S^1$.
	This yields due to $S$, $SA^1$ being symmetric
	$$0=\sum_{l=1}^dS^2_{k1}b^1-\sum_{l=1}^dS^2_{kl}(D_uF(E^1)^t D_uF)_l)=S^2_{k1}B^1-D_uF^1\sum_{l=1}^dS^2_{kl}D_uF^l$$
	for $k=2,\ldots, d$. Hence \eqref{nd1} gives $S^2_{k1}B^1=0$ for such $k$. As $b^1$ is invertible we find $S^2_{k1}=0$, $k\neq 1$ and $S^2$ has the form
	$$S_2= \begin{pmatrix}
		S^2_{11}& 0_{m \times m(d-1)}\\
		0_{m(d-1) \times m} &S^2_{22}
	\end{pmatrix}$$
	for some symmetric positive definite matrices $S^2_{11} \in \R^{m \times m}$, $S_{22}^2 \in \R^{m(d-1) \times m(d-1)}$. Plugging this back in \eqref{nd1} gives
	$$S^2_{22}\begin{pmatrix} Duf^2 \\ \vdots \\ Duf^d \end{pmatrix}=0,$$
	i.e. $D_uF^k=0$ for all $k \neq 1$. Repeating this argumentation with $\omega=e_2$ also shows $D_uF^1=0.$
	
\end{proof}

Using the results of Proposition \ref{prop:jx2} we can finally proof Proposition \ref{prop:none}.

\begin{proof}[Proof of Proposition \ref{prop:none}]
	For
	$$F(u)=(u_1-u_2,u_2-u_1)^t, \quad K=D_uF(0)=\begin{pmatrix}
		1 &-1\\1&-1
	\end{pmatrix}$$
	 and  the matrix $b=\operatorname{diag}(\kappa_1,\kappa_2)$ for $\kappa_1 \neq \kappa_2 >0$ consider 
	 $$\tilde A=\begin{pmatrix}
	 	0 & I_2\\
	 	b & 0
	 \end{pmatrix},\quad \tilde \mL=\begin{pmatrix} 0& 0\\ D_uF(0) & -I_2\end{pmatrix}$$
 These matrices correspond to the (linear) Jin-Xin system
 \begin{align*}
 	u_t+v_x&=0,\\
 	v_t+bv_x&=-(v-f(u)).
 \end{align*}
 The coefficient matrices in normal form are given by 
 $$A=\begin{pmatrix}
 	K & I_m\\
 	b-K^2 & -K
 \end{pmatrix},\quad \mL=\begin{pmatrix}
 0 & 0 \\ 0 & -I_2
\end{pmatrix}.$$
Thus by Proposition \ref{prop:jx2} $A$ and $\mL$ satisfy (D1), (D2), (D3) if and only if $K$ and $B(\omega)=b$ satisfy (D1)$_2$, (D2)$_2$, (D3)$_2$ respectively. 
	 
It was  shown in \cite{FS} that  condition (D2)$_2$ is equivalent to $\kappa_1+\kappa_2 >8$ and (D3)$_2$ to $\kappa_1,\kappa_2>1$. Additionally (D2)$_2$ holds if
$$(\kappa_1+\kappa_2 >8 \land \kappa_1,\kappa_2 \ge 1)~~\lor~~ (\kappa_1+\kappa_2 \ge 8 \land \kappa_1,\kappa_2 > 1).$$  Thus for $\kappa_1=1$, $\kappa_2>7$  (D1) and (D2) are satisfied but (D3) is not. On the other hand, for  $1<\kappa_1<7$, $\kappa_2=8-\kappa_1$ (D1) and (D3) hold but (D2) does not. As $A_{11}$ and $A$ are constantly hyperbolic this shows Proposition \ref{prop:none} due to \ref{prop:hc2}. 
\end{proof}

\textbf{Connections to second-order-hyperbolic systems.} In the proof of Proposition \ref{prop:none} we made use of an example of \cite{FS}, which was constructed for second-order hyperbolic systems. This works due to the following fact: By differentiating the first equation of \eqref{xc} w.r.t. $t$, the equations for $v^j_t$ w.r.t. $x_j$, adding the latter and substracting the resulting equations we see that for smooth solutions $(u,v)$ of \eqref{xc} $u$ satisfies the second-order hyperbolic system
\begin{equation}
	\label{so}
	u_{tt}-\sum_{j=1}^d b^{j}u_{x_jx_j}+u_t+\sum_{j=1}^d D_uF^j(u)u_{x_j}=0.
\end{equation}
Now \cite{sro23} tells us that for $d\ge 3$ solutions $U=(u,u_t)$ to the corresponding Cauchy-problem are $(H^{s+1} \times H^s)\cap L^1 \to H^s \times H^{s-1}$, $s>d/2+1$ asymptotically stable with rate $(1+t)^{-\frac{d}4}$ if (D1)$_2$, (D2)$_2$, (D3)$_2$ hold. However, we could not have proven that \eqref{xc} satisfies Theorem \ref{main} by only refering to the results in \cite{sro23} since the smallness of the intial data $(u_0,v_0)$ in the $L^1$-norm does not imply the smallness of $u_t(0)=-\sum_{j=1}^d v^j_{x_j}(0)$ in the $L^1$-norm.

\appendix

\section{Stable Compatibility}

\begin{proof}[Proof of Lemma \ref{lem:ch}]
	First note that (SC)$'$ is invariant under smooth basis transformations. Hence it holds for all projectors $\tilde J_{Rl}$ if it holds for one projector $J_{Rl}$.
	
	The necessity of (SC)$'$ is obvious even in the case of non-constant multiplicities. 
	
	Regarding its sufficiency fix $\bar \omega \in S^{d-1}$, $l \in \{1,\ldots,\bar l\}$ and let $\tilde J_{Rl}: \mathcal V_{\bar \omega} \to \C^{n \times \alpha_{l}}$ be a right projector on $R(P_l(\omega))$. Due to constant multiplicity
	$$\tilde J_{Ll}(\omega)H(\omega)\tilde J_{Rl}(\omega)=\mu_l(\omega)I_{\alpha_l}$$
	As all eigenvalues of 
	$$h_l(\omega)= \tilde J_{Ll}( \omega) h(\omega) \tilde J_{Rl}(\omega)$$ have strictly negative real part there exists an invertible matrix $T=T(\bar \omega) \in \C^{\alpha_l \times \alpha_l}$ such that 
	$$\Rep (T^{-1} h_l(\bar \omega) T) \le -cI_{\alpha_l}.$$
	Since 
	$$T^{-1}\tilde J_{Ll}(\omega)H(\omega)\tilde J_{Rl}(\omega)T=\mu_l(\omega)I_{\alpha_l}$$
	with $\mu \in i\R$, \eqref{proj} follows with $J_{Rl}=\tilde J_{Rl}T$. 
\end{proof}

\begin{proof}[Proof of Lemma \ref{lem:sym}]
	Since $H(\omega)$ is anti-Hermitian, clearly also $J_{Ll}(\omega)H(\omega)J_{Rl}(\omega)$ is anti-Hermitian if $J_{Ll}(\omega)=J_{Rl}(\omega)^*$. The result then follows directly from the fact that a Hermitian matrix $K$ is negative definite if and only if all of its 
	eigenvalues have strictly negative real part.
\end{proof}

\begin{proof}[Proof of Lemma \ref{lem:sc}]
	The lemma is a generalization of Lemma 5 in \cite{FS} and we follow the argumentation therein. 
	
	First, fix $\bar \omega \in \Omega$. Below we  $(\kappa,\omega) \in P_{\bar \omega}$ for a neighbourhood $P_{\bar \omega} \subset [0,\infty) \times \Omega$ of $(0,\bar \omega)$, which may be chosen sufficiently small (in the sense of inclusion of sets) in each step. Set
	$$T_0:=\begin{pmatrix}
		J_{R1} & \ldots  J_{R\bar l}
	\end{pmatrix},~~ H_l:=J_{Ll}H(\omega)J_{Rl},~l=1,\ldots,\bar l.$$
	Then by definition 
	$$T_0^{-1}HT_0=\operatorname{diag}(H_1,\ldots,H_{\bar l}).$$
	 By spectral separation, there exists a smooth family  of basis transformations $T:P_{\bar \omega} \to \C^{n \times n}$ with $T(0,\omega)=T_0(\omega)$ such that
	$$T(\kappa,\omega)^{-1}\mathcal H(\kappa,\omega)T(\kappa,\omega)=\operatorname{diag}(\mathcal H_1(\kappa,\omega),\ldots,\mathcal H_{\bar k}(\kappa,\omega))$$
	with blocks $\mathcal H_l(\kappa,\omega) \in \C^{\alpha_l \times \alpha_l}$, $l=1,\ldots,\bar l$ satisfying 
	$$\mathcal H_l(0,\omega)=J_{Ll}(\omega)\mathcal H(\omega)J_{Rl}(\omega).$$ In particular $\mathcal H_l(0,\bar \omega)=\mu_l(\bar \omega) I_{\alpha_l}$, which gives
	$$(\partial_{\kappa}\mathcal H_l)(0,\bar \omega)=(\partial_{\kappa}(\mathcal H_l-\mu_lI_{\alpha_l}))(0,\bar \omega)=J_{Ll}(\bar \omega)h(\bar \omega)J_{Rl}(\bar \omega).$$
	As $h$ is stably compatible with $H$ this implies
	$$\Rep\mathcal H_l(0,\omega)= 0,\quad \Rep \partial_\kappa \mathcal H_l(0,\omega) <0.$$ 
	In conclusion
	$$\Rep T(\kappa,\omega)^{-1}\mathcal H(\kappa,\omega)T(\kappa,\omega)\le - c\kappa I_n$$
	for all $(\kappa,\omega) \in P_{\bar \omega}$,  and it is straightforward to see that
	$$\mathcal D=(T^{-1})^*T^{-1}$$
	satisfies \eqref{S1}, \eqref{S2} restricted to $P_{\bar \omega}$.
	
	Now, let $K$ be a compact subset of $\Omega$. Then we find $\omega_1,\ldots,\omega_{\bar k}$ and, for each $k=1,\ldots,\bar k$ a neighbourhood $P_{\bar \omega}$ and a positive constants $C_k$ $c_k$,   as well as smooth matrix families $\mathcal D_k:\overline{P_{\omega_k}} \to \C^{N \times N}$ such that $\mathcal D_k$ satisfies \eqref{S1}, \eqref{S2} (with constants $C=C_k,c=c_k$) and for some $\kappa_0>0$ the family $(P_{\bar \omega_k})_{1 \le k \le \bar k}$
	is an open cover of $[0,\kappa_0] \times K$. Now choose a partition of unity $(\zeta_k)_{1 \le k \le \bar k}$ subordinate to this cover. Extend each $\mathcal D_k$ trivially by $0$ outside $P_{\bar \omega_k}$. Then it is straightforward to show that \eqref{S1}, \eqref{S2} hold for
	$$\mathcal D=\sum_{k=1}^m \mathcal D_k\zeta_k,~ c=\min_{k=1,\dots,m}c_k,~C=\sum_{k=1}^m C_k.$$
\end{proof}

\section{Key ideas regarding non-linear stability in hyperbolic balance laws}

To illustrate the main ideas of the present work, in particular in comparison to \cite{kawa83,kawa85,yon01,kawa04,yon04,bia07,kawa09} consider the one dimensional case $d=1$, the linearization of \eqref{eq:haupt} about an equilibrium state $\bar U \in \mathcal U$: 
\begin{equation} 
	\label{lin0}
	U_t+AU_{x}=\mL U,
\end{equation} 
$A=Df^1(\bar U), \mL=D_UQ(\bar U) \in \R^{n \times n}$, and its Fourier-transformed version
\begin{equation} 
	\label{linf0}
	\hat U_t+i\xi A\hat U=\mL\hat U,~~\xi \in \R.
\end{equation} 
In any of the works above it is crucial to establish a decay estimate of the form
\begin{equation} 
	\label{dec0}
	|\hat U(t,\xi)|^2 \le C\exp(-c\rho(\xi)t)|U(0,\xi)|^2,~~(t,\xi) \in [0,\infty) \times \R,
\end{equation}
or equivalently, for $\mM(\xi)=-i\xi A+\mL$.
\begin{equation}
	\label{dec1}
	|\exp(\mM(\xi)t)| \le C\exp(-c\rho(\xi)t)
\end{equation}
where the rate $\rho(\xi)$ is given as
\begin{equation}
	\label{rho}
	\rho(\xi)=\frac{\xi^2}{1+\xi^2}.
\end{equation}
Since in leading order $\mM(\xi)$
behaves as $\mL$ and $iA$ for $|\xi| \to 0$ and $|\xi| \to \infty$ respectively, for such an estimate to hold it is necessary that the eigenvalues  of $A$ are real and semi-simple and those of $\mL$ have real-part less than or equal to $0$. In addition, if $\mL$ has an eigenvalue with vanishing real part the decay \eqref{dec0} is optimal. Thus we say \eqref{lin0} has optimal decay if an estimate of the form \eqref{dec0} holds. Following \cite{FS, sro23} we also call such systems uniformly  dissipative.

For simplicity (and as this is always the case in applications) assume that $\mL$ possesses no purely imaginary eigenvalue.  Then the necessary conditions above imply the existence of basis transformations $S_A$ and $S_L$ such that 
\begin{equation} 
	\label{a00}
	S_AAS_A^{-1}~~ \text{is symmetric}
\end{equation}
and
\begin{equation} 
	\label{l0}
	S_L\mL S_L^{-1}=\begin{pmatrix}
		0 & 0\\
		0 & \tilde L
	\end{pmatrix}~~\text{with}~~\Rep \tilde L=\frac12(\tilde L+\tilde L)^*>0.
\end{equation}
(Here `$\cdot>0$' means positive definiteness of the matrix.)  Now, the existence of an entropy and the entropy dissipation conditions assumed in \cite{yon04} (Section 1, conditions (i)-(iii)) crucially imply that one can choose $S_A=S_L$ in \eqref{a00}, \eqref{l0} (cf. the proof of Proposition \ref{prop:ed} or the argumentation in \cite{bia07}). Thus in variables $V=S^{-1}U$ system \eqref{lin0} can be written in the form
\begin{equation}
	\label{cd}
	V_t+\tilde A V_x=\tilde{\mL}V,~\tilde A~~\text{symmetric},~\tilde{\mL}=\begin{pmatrix}
		0 & 0\\
		0 & \tilde L
	\end{pmatrix},~~\Rep \tilde L>0. 
\end{equation}
Following the terminology of \cite{bia07} we say that such a system has conservative-dissipative form (C-D form). In this case it can be shown (\cite{kawa85,bia07}) that optimal decay is equivalent to the Kawashima-Shizuta condition
\begin{enumerate}
	\item[(K)] $\forall \lambda \in \C:\ker(\lambda+A) \cap \ker \mL=\{0\},$
\end{enumerate}
which in turn is equivalent to the decay of all Fourier-Laplace-modes in the sense that all eigenvalues of $-i\xi A+\mL$ have strictly negative real-part for $\xi \in \R^{d} \setminus\{0\}.$ In addition, (K) implies  asymptotic $H^s\cap L^1 \to H^{s-1}$ stability with rate $(1+t)^{-\frac{d}4}$ for $s\ge [d/2]+2$. This result  generalizes in a straightforward manner for the multi-D case. 

In the present work we neither assume simultaneous symmetrizability of $A$ and $\mL$ nor  the existence of an entropy as in \cite{yon04}, which would allow for writing the linearization in C-D form.  In this general case (K) is not a suitable condition to determine whether \eqref{lin0} admits optimal decay (Proposition \ref{prop:none}). However, we  give sufficient and (under the assumption of constant multiplicity) necessary algebraic (dissipativity) conditions for optimal decay and show that these imply (non-linear) $H^s \cap L^1 \to H^{s-1}$  stability, $s\ge d/2+2$, with rate $(1+t)^{-d/4}$ for $d\ge 3$. In order to motivate these conditions, note that optimal decay is equivalent to the existence of a family of basis-transformations $(S(\xi))_{\xi \in \R}$ such that $|S(\xi)|, |S(\xi)^{-1}|$ are uniformly bounded and
\begin{equation} 
	\label{dec2}
	\Rep S(\xi)\mM(\xi)S(\xi)^{-1} \le -c\rho(\xi)I_N ,~~\xi \in \R^d.
\end{equation}
By a matrix perturbation argument with respect to $1/|\xi|$ it is quite straightforward to show (Lemma \ref{lem:lw}) that \eqref{dec2} holds for $|\xi|$ sufficiently large if and only if
\begin{enumerate}
	\item[(K')] For all eigenvalues $\lambda$ of $A$ there exists a basis $(r_1,\ldots,r_m)$  of the eigenspace corresponding to $\lambda$ with dual basis $(p_1,\ldots,p_d)$ such that with
	$$R=(r_1,\ldots,r_m),\quad P=\begin{pmatrix} 
		p_1\\ \vdots \\p_m\end{pmatrix}$$
	there holds
	$$\Rep P\mL R <0.$$
\end{enumerate}
For systems in C-D form  $\Rep \tilde{\mL}$ is positive semi-definite and $\ker \tilde{\mL}=\ker \Rep \tilde{\mL}$. Therefore (K') is already implied by (K). However,  it turns out (Proposition \ref{prop:none}) that in general (K) or even (K') is not sufficient for optimal decay and the additional conditions (D1), (D2) are necessary in order to obtain \eqref{dec0}.

Lastly, we would like to point our that the difficulties in treating more general systems are not restricted to the linear level. Indeed, for linear systems which can be written in C-D form (K) is equivalent to the existence of a so-called compensating matrix (cf. \cite{kawa83}, \cite{bia07}). This matrix, which only depends on the rest state, can then be used to obtain the central a-priori decay estimates also for the non-linear system. In our general framework, such a matrix is not accessible. Instead to treat non-linear perturbation we construct a certain symmetrizer depending on the perturbation itself, which necessitates the use of para-differential calculus. The technical details of such a construction have been laid out in \cite{sro23}.

\section*{Statements and declarations} 

\textbf{Declaration of interests.} None.

\textbf{Acknowledgement.} The author would like to sincerely thank Heinrich Freistühler for his highly helpful suggestions and comments as well as many fruitful discussions.

\bibliographystyle{abbrvnat}

\begin{thebibliography}{24}
	\providecommand{\natexlab}[1]{#1}
	\providecommand{\url}[1]{\texttt{#1}}
	\expandafter\ifx\csname urlstyle\endcsname\relax
	\providecommand{\doi}[1]{doi: #1}\else
	\providecommand{\doi}{doi: \begingroup \urlstyle{rm}\Url}\fi
	
	\bibitem[B\"arlin(2023)]{bae23}
	J.~B\"arlin.
	\newblock Formation of singularities in solutions to nonlinear hyperbolic
	systems with general sources.
	\newblock \emph{Nonlinear Anal. Real World Appl.}, 73:\penalty0 Paper No.
	103901, 15, 2023.
	\newblock \doi{10.1016/j.nonrwa.2023.103901}.
	
	\bibitem[Bianchini et~al.(2007)Bianchini, Hanouzet, and Natalini]{bia07}
	S.~Bianchini, B.~Hanouzet, and R.~Natalini.
	\newblock Asymptotic behavior of smooth solutions for partially dissipative
	hyperbolic systems with a convex entropy.
	\newblock \emph{Comm. Pure Appl. Math.}, 60\penalty0 (11):\penalty0 1559--1622,
	2007.
	\newblock \doi{10.1002/cpa.20195}.
	
	\bibitem[Bony(1981)]{bon81}
	J.~M. Bony.
	\newblock Calcul symbolique et propagation des singularit\'{e}s pour les
	\'{e}quations aux d\'{e}riv\'{e}es partielles non lin\'{e}aires.
	\newblock \emph{Ann. Sc. Ec. Norm. Sup. {(4)}}, 14\penalty0 (2):\penalty0
	209--246, 1981.
	\newblock \doi{10.24033/asens.1404}.
	
	\bibitem[Crin-Barat and Danchin(2022)]{dan22}
	T.~Crin-Barat and R.~Danchin.
	\newblock Partially dissipative hyperbolic systems in the critical regularity
	setting: the multi-dimensional case.
	\newblock \emph{J. Math. Pures Appl. (9)}, 165:\penalty0 1--41, 2022.
	\newblock \doi{10.1016/j.matpur.2022.07.001}.
	
	\bibitem[Freist\"uhler(2023)]{fre23}
	H.~Freist\"uhler.
	\newblock Hyperbolische {M}odelle der {M}athematischen {F}luiddynamik
	{W}intersemester 2022/23.
	\newblock Unpublished Lecture Notes, 2023.
	\newblock URL \url{https://www.math.uni-konstanz.de/\~{}freist/hmfd.html}.
	
	\bibitem[Freist\"{u}hler and Sroczinski(2021)]{FS}
	H.~Freist\"{u}hler and M.~Sroczinski.
	\newblock A class of uniformly dissipative symmetric hyperbolic-hyperbolic
	systems.
	\newblock \emph{J. Differ. Equ.}, 288:\penalty0 40--61, 2021.
	\newblock \doi{10.1016/j.jde.2021.04.005}.
	
	\bibitem[Jin and Xin(1995)]{jin95}
	S.~Jin and Z.~P. Xin.
	\newblock The relaxation schemes for systems of conservation laws in arbitrary
	space dimensions.
	\newblock \emph{Comm. Pure Appl. Math.}, 48\penalty0 (3):\penalty0 235--276,
	1995.
	\newblock \doi{10.1002/cpa.3160480303}.
	
	\bibitem[Kato(1976)]{kat76}
	T.~Kato.
	\newblock \emph{Perturbation Theory for Linear Operators}, volume 132 of
	\emph{{Die Grundlagen der mathematischen Wissenschaften in
			Einzeldarstellungen}}.
	\newblock Springer, Berlin etc., 2nd edition, 1976.
	
	\bibitem[Kawashima(1983)]{kawa83}
	S.~Kawashima.
	\newblock \emph{Systems of a Hyperbolic-Parabolic Composite Type, with
		Applications to the Equations of Magnetohydrodynamics}.
	\newblock PhD thesis, Kyoto University, 1983.
	
	\bibitem[Kawashima(2008)]{kawa082}
	S.~Kawashima.
	\newblock Dissipative structure of regularity-loss type and applications.
	\newblock In \emph{Hyperbolic problems: theory, numerics, applications}, pages
	45--57. Springer, Berlin, 2008.
	\newblock \doi{10.1007/978-3-540-75712-2\_4}.
	
	\bibitem[Kawashima and Yong(2004)]{kawa04}
	S.~Kawashima and W.-A. Yong.
	\newblock Dissipative structure and entropy for hyperbolic systems of balance
	laws.
	\newblock \emph{Arch. Rational Mech. Anal.}, 174\penalty0 (3):\penalty0
	345--364, 2004.
	\newblock \doi{10.1007/s00205-004-0330-9}.
	
	\bibitem[Kawashima and Yong(2009)]{kawa09}
	S.~Kawashima and W.-A. Yong.
	\newblock Decay estimates for hyperbolic balance laws.
	\newblock \emph{Z. Anal. Anwend.}, 28\penalty0 (1):\penalty0 1--33, 2009.
	\newblock \doi{10.4171/ZAA/1369}.
	
	\bibitem[Kreiss(1959)]{kre59}
	H.-O. Kreiss.
	\newblock \"{U}ber {M}atrizen die beschr\"{a}nkte {H}albgruppen erzeugen.
	\newblock \emph{Math. Scand.}, 7:\penalty0 71--80, 1959.
	\newblock \doi{10.7146/math.scand.a-10563}.
	
	\bibitem[Majda(1984)]{maj84}
	A.~Majda.
	\newblock Compressible fluid flow and systems of conservation laws in several
	space variables.
	\newblock In \emph{Applied mathematical sciences}, 53. Springer, Berlin,
	Heidelberg, 1984.
	
	\bibitem[M\'{e}tivier(2008)]{met08}
	G.~M\'{e}tivier.
	\newblock \emph{Para-differential calculus and applications to the {C}auchy
		problem for nonlinear systems}, volume~5 of \emph{Centro di Ricerca
		Matematica Ennio De Giorgi (CRM) Series}.
	\newblock Edizioni della Normale, Pisa, 2008.
	
	\bibitem[Meyer(1981)]{mey81}
	Y.~Meyer.
	\newblock Remarques sur un th\'eor\`{e}me de {J.-M.} {B}ony.
	\newblock In \emph{Proceedings of the Seminar on Harmonic Analysis (Pisa,
		1980)}, number suppl.1 in Rend. Circ. Mat. Palermo (2), pages 1--20. 1981.
	
	\bibitem[Shizuta and Kawashima(1985)]{kawa85}
	Y.~Shizuta and S.~Kawashima.
	\newblock Systems of equations of hyperbolic-parabolic type with applications
	to the discrete {Boltzmann} equation.
	\newblock \emph{Hokkaido Math. J.}, 14\penalty0 (2):\penalty0 249--275, 1985.
	\newblock \doi{10.14492/hokmj/1381757663}.
	
	\bibitem[Sroczinski(2024)]{sro23}
	M.~Sroczinski.
	\newblock Global existence and decay of small solutions for quasi-linear
	second-order uniformly dissipative hyperbolic-hyperbolic systems.
	\newblock \emph{J. Differential Equations}, 383:\penalty0 130--162, 2024.
	\newblock \doi{10.1016/j.jde.2023.10.056}.
	
	\bibitem[Taylor(1991)]{T91}
	M.~E. Taylor.
	\newblock \emph{Pseudodifferential operators and nonlinear {PDE}}, volume 100
	of \emph{Progress in mathematics}.
	\newblock Birkh\"auser Boston Inc., Boston, MA, 1991.
	\newblock \doi{10.1007/978-1-4612-0431-2}.
	
	\bibitem[Ueda(2021)]{ued21}
	Y.~Ueda.
	\newblock Characterization of the dissipative structure for the symmetric
	hyperbolic system with non-symmetric relaxation.
	\newblock \emph{J. Hyperbolic Differ. Equ.}, 18\penalty0 (1):\penalty0
	195--219, 2021.
	\newblock \doi{10.1142/S0219891621500053}.
	
	\bibitem[Ueda et~al.(2018)Ueda, Duan, and Kawashima]{ued18}
	Y.~Ueda, R.~Duan, and S.~Kawashima.
	\newblock New structural conditions on decay property with regularity-loss for
	symmetric hyperbolic systems with non-symmetric relaxation.
	\newblock \emph{J. Hyperbolic Differ. Equ.}, 15\penalty0 (1):\penalty0
	149--174, 2018.
	\newblock \doi{10.1142/S0219891618500066}.
	
	\bibitem[Xu and Kawashima(2015)]{kawa15}
	J.~Xu and S.~Kawashima.
	\newblock The optimal decay estimates on the framework of {B}esov spaces for
	generally dissipative systems.
	\newblock \emph{Arch. Ration. Mech. Anal.}, 218\penalty0 (1):\penalty0
	275--315, 2015.
	\newblock \doi{10.1007/s00205-015-0860-3}.
	
	\bibitem[Yong(2001)]{yon01}
	W.-A. Yong.
	\newblock Basic aspects of hyperbolic relaxation systems.
	\newblock In \emph{Advances in the theory of shock waves}, volume~47 of
	\emph{Progr. Nonlinear Differential Equations Appl.}, pages 259--305.
	Birkh\"{a}user Boston, Boston, MA, 2001.
	\newblock \doi{10.1007/978-1-4612-0193-9\_4}.
	
	\bibitem[Yong(2004)]{yon04}
	W.-A. Yong.
	\newblock Entropy and global existence for hyperbolic balance laws.
	\newblock \emph{Arch. Rational Mech. Anal.}, 172\penalty0 (2):\penalty0
	247--266, 2004.
	\newblock \doi{10.1007/s00205-003-0304-3}.
	
\end{thebibliography}

\end{document}